\theoremstyle{plain}
\newtheorem{theorem}{Theorem}[section]
\newtheorem{maintheorem}{Theorem}
\newtheorem{question}[theorem]{Question}
\newtheorem{proposition}[theorem]{Proposition}
\newtheorem{lemma}[theorem]{Lemma}
\theoremstyle{definition}
\newtheorem{definition}[theorem]{Definition}
\newtheorem{remark}[theorem]{Remark}
\newcommand{\nc}{\newcommand}
\nc{\dmo}{\DeclareMathOperator}
\nc{\Q}{\mathbb{Q}}
\nc{\F}{\mathbb{F}}
\nc{\R}{\mathbb{R}}
\nc{\Z}{\mathbb{Z}}
\nc{\C}{\mathbb{C}}
\nc{\Ell}{\mathcal{L}}
\nc{\M}{\mathcal{M}}
\nc{\K}{\mathcal{K}}
\nc{\I}{\mathcal{I}}
\nc{\U}{\mathcal U}
\nc{\disk}{\mathbb{D}}
\nc{\hyp}{\mathbb{H}}
\nc{\CP}{\mathbb{CP}}
\nc{\cS}{\mathcal{S}}
\dmo{\Mod}{Mod}
\dmo{\PMod}{PMod}
\dmo{\Diff}{Diff}
\dmo{\Homeo}{Homeo}
\dmo{\dist}{dist}
\dmo\BDiff{BDiff}
\dmo\SO{SO}
\dmo\Hom{Hom}
\dmo\SL{SL}
\dmo\Sp{Sp}
\dmo\rank{rank}
\dmo\sig{sig}
\dmo\Out{Out}
\dmo\Aut{Aut}
\dmo\Inn{Inn}
\dmo\GL{GL}
\dmo\PSL{PSL}
\dmo\BHomeo{BHomeo}
\dmo\EHomeo{EHomeo}
\dmo\EDiff{EDiff}
\nc\Sig{\Sigma}
\dmo\Teich{Teich}
\dmo\Fix{Fix}
\nc{\pair}[1]{\langle #1 \rangle}
\nc{\abs}[1]{\left| #1 \right|}
\nc{\action}{\circlearrowright}
\nc{\norm}[1]{\left | \left | #1 \right | \right |}
\nc{\abcd}[4]{\left(\begin{array}{cc} #1 & #2 \\ #3 & #4 \end{array}\right)}
\dmo{\Isom}{Isom}
\nc{\normal}{\vartriangleleft}
\dmo{\Vol}{Vol}
\dmo{\im}{Im}
\dmo{\Push}{Push}
\dmo{\Conf}{Conf}
\dmo{\PConf}{PConf}
\dmo{\id}{id}
\dmo{\Jac}{Jac}
\dmo{\Pic}{Pic}
\dmo{\Stab}{Stab}
\dmo{\Arf}{Arf}
\dmo{\End}{End}
\dmo{\PB}{PB}
\dmo{\CRS}{CRS}
\dmo{\PGL}{PGL}
\dmo{\Sym}{Sym}
\dmo{\Cov}{Cov}
\nc{\Span}[1]{\operatorname{Span}(#1)}
\renewcommand{\epsilon}{\varepsilon}
\renewcommand{\tilde}{\widetilde}
\renewcommand{\bar}{\overline}
\renewcommand{\int}{\operatorname{Int}}
\nc{\coloneq}{\mathrel{\mathop:}\mkern-1.2mu=}
\nc{\margin}[1]{\marginpar{\scriptsize #1}}
\nc{\para}[1]{\medskip\noindent\textbf{#1.}}
\nc{\red}[1]{\textcolor{red}{#1}}
\title{Section problems for configurations of points on the Riemann sphere}
\author{Lei Chen \and Nick Salter}
\email{chenlei1991919@gmail.com \and salter.n@gmail.com}
\address{LC: Department of Mathematics, Caltech, Pasadena, CA; NS: Department of Mathematics, Columbia University, New York, NY}
\thanks{N.S. gratefully acknowledges support by the National Science Foundation under Award No. DMS-1703181.}
\date{July 24, 2018}
\begin{document}
\maketitle

\begin{abstract}
This paper contains a suite of results concerning the problem of adding $m$ distinct new points to a configuration of $n$ distinct points on the Riemann sphere, such that the new points depend continuously on the old. Altogether, the results of the paper provide a complete answer to the following question: given $n \ne 5$, for which $m$ can one continuously add $m$ points to a configuration of $n$ points? For $n \ge 6$, we find that $m$ must be divisible by $n(n-1)(n-2)$, and we provide a construction based on the idea of cabling of braids. For $n = 3,4$, we give some exceptional constructions based on the theory of elliptic curves. 
\end{abstract}

\section{Introduction}
This paper studies the space $\Conf_n(S^2)$ of configurations of $n$ distinct unordered points in $S^2$. This is the base space for a fiber bundle $P: \Conf_{n,m}(S^2) \to \Conf_n(S^2)$, where the total space $\Conf_{n,m}(S^2)$ is the space of configurations of $n+m$ distinct points divided into two groups of cardinalities $n$ and $m$. For any fiber bundle $\pi: E \to B$, it is a basic question to understand the space of {\em sections}, i.e. continuous maps $\sigma: B \to E$ satisfying $\pi \circ \sigma = \id$. In the case of $P: \Conf_{n,m}(S^2) \to \Conf_n(S^2)$, a section $S: \Conf_n(S^2) \to \Conf_{n,m}(S^2)$ has a very natural interpretation: $S$ is an assignment of $m$ additional points to a given configuration of $n$ distinct points that depends continuously on the position of the $n$ points. 

The approach we pursue in this paper is to study sections of $P$ by means of the fundamental group. The {\em spherical braid group} $B_n(S^2)$ is the fundamental group of $\Conf_n(S^2)$, and we also define 
\[
B_{n,m}(S^2) := \pi_1(\Conf_{n,m}(S^2)).
\]
Setting $p:=P_*$ and $s:= S_*$, a section $S$ induces a group-theoretic section $s: B_n(S^2) \to B_{n,m}(S^2)$ of the surjective homomorphism $p: B_{n,m}(S^2) \to B_n(S^2)$. Thus an obstruction to the existence of a group-theoretic section $s$ furnishes an obstruction to the existence of a bundle-theoretic section $S$. A standard argument in obstruction theory shows that the converse is true as well.

The theory of sections of bundles of configuration and moduli spaces plays an important role in topology, geometric group theory, and algebraic geometry. See, for instance, the work of L. Chen \cite{lei1}, \cite{lei2}, \cite{lei3}, L. Chen--Salter \cite{leinick}, W. Chen \cite{weiyan},  Lin \cite{lin} or the classic papers of Earle--Kra \cite{earlekra1}, \cite{earlekra2} and Hubbard \cite{hubbard}, each of which treats various instances of the problem of obstructing and classifying sections of such bundles. 

The section problem for $P: \Conf_{n,m}(S^2) \to \Conf_n(S^2)$ is particularly subtle and rich for several reasons. If the ambient space $S^2$ is replaced with $\C$, sections of $\Conf_{n,m}(\C) \to \Conf_n(\C)$ are easy to construct: one can simply add $m$ new points ``near infinity''. By contrast, even the mere {\em existence} of sections in the spherical case is far from obvious. Foundational work on this question was carried out by Gon\c calves-Guaschi in \cite{GG}. They established the following intriguing theorem.

\begin{theorem}[Gon\c calves-Guaschi]\label{theorem:GG}
A group-theoretic section $s: B_3(S^2) \to B_{3,m}(S^2)$ exists if and only if $m \equiv 0$ or $m \equiv 2 \pmod 3$.

For $n \ge 4$, there are no sections $s: B_n(S^2) \to B_{n,m}(S^2)$ except possibly if $m$ is congruent to one of the four residues $0,(n-1)(n-2), -n(n-2), -(n-2)$ mod $n(n-1)(n-2)$.
\end{theorem}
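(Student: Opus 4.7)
My plan is to separate the problem into obstruction (necessity) and construction (sufficiency for $n=3$), with the obstructions coming from torsion in $B_n(S^2)$. The key initial observation is that a section $s$ splits the exact sequence
\[
1 \to K \to B_{n,m}(S^2) \xrightarrow{p} B_n(S^2) \to 1,
\]
where $K = \pi_1(\Conf_m(S^2 \setminus C))$ is the fundamental group of a fiber over a reference configuration $C$. Because $S^2 \setminus C$ is homotopy equivalent to a wedge of $n-1$ circles (for $n \ge 2$), its configuration spaces are aspherical and $K$ is torsion-free. Hence $s$ embeds every finite cyclic subgroup of $B_n(S^2)$ into $B_{n,m}(S^2)$ with the same order.

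For the obstructions, I would identify three cyclic subgroups of $B_n(S^2)$ of orders $n$, $n-1$, $n-2$, realized by rotations of $S^2$ that permute $n$ marked points with $0$, $1$, or $2$ of the points sitting at the rotation's fixed poles. By the Brouwer--Ker\'ekj\'art\'o theorem (every finite-order element of $B_N(S^2)$ is conjugate to a rigid rotation of $S^2$), the image under $s$ of each such element is again a rotation of $S^2$, now permuting the full $(n+m)$-point configuration. Counting orbit sizes against the available polar fixed points yields the rough congruences
\[
m \in \{0,1,2\} \pmod n, \qquad m \in \{0,1\} \pmod{n-1}, \qquad m \equiv 0 \pmod{n-2}.
\]
A finer local analysis at the free pole(s), using the braid relations (or equivalently the interaction between rotational lifts and the central full twist $\Delta^2$), would rule out $m \equiv 1 \pmod n$; combining the surviving congruences via the Chinese Remainder Theorem then yields precisely the four claimed residue classes $0,\,(n-1)(n-2),\,-n(n-2),\,-(n-2)$ modulo $n(n-1)(n-2)$. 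For $n=3$ the mod $n-2 = 1$ condition is vacuous and mod $n-1 = 2$ is automatic, so only $m \equiv 0, 2 \pmod 3$ survives.

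For the existence direction (only needed in the $n=3$ case), I would construct explicit sections: a cabling construction, replacing each of the three points with a small cluster of $m/3$ satellites moving equivariantly, handles $m \equiv 0 \pmod 3$, while the $m \equiv 2 \pmod 3$ case uses an elliptic-curve construction, identifying $\Conf_3(S^2)$ with a moduli space of elliptic curves (via the double cover branched at three marked points together with a fourth point at infinity) and producing the extra $m$ points canonically from torsion on the elliptic curve. The main obstacle I anticipate is the refinement ruling out $m \equiv 1 \pmod n$: the broad congruences follow from Brouwer--Ker\'ekj\'art\'o almost mechanically, but this finer constraint requires genuinely exploiting the braid relations, or equivalently the precise structure of centralizers of torsion inside $B_{n,m}(S^2)$, and that is where I expect the bulk of the technical work to lie.
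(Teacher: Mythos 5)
The paper does not prove Theorem~\ref{theorem:GG}; it is cited directly from Gon\c calves--Guaschi \cite{GG}, so there is no internal proof to compare against. Your strategy — use torsion-freeness of the kernel to force a section to preserve torsion orders, then count orbits of the resulting rigid rotations — is indeed the right skeleton, and it matches the spirit of both the cited work and the paper's own Lemmas~\ref{lemma:torsion} and~\ref{lemma:reducibledistinct}, which run the analogous count in $\Mod_{n,m}(S^2)$.

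However, there is a genuine arithmetic gap. Your three congruences live in $\Mod_{N}(S^2)$, where $\alpha_0, \alpha_1, \alpha_2$ have orders $n, n-1, n-2$, so even after discarding $m \equiv 1 \pmod n$ you only get conditions modulo $n$, $n-1$, $n-2$. These determine $m$ only modulo $\mathrm{lcm}(n,n-1,n-2)$, which equals $n(n-1)(n-2)$ when $n$ is odd but $n(n-1)(n-2)/2$ when $n$ is even (since $\gcd(n,n-2)=2$). For even $n$ your sieve therefore leaves \emph{eight} residue classes mod $n(n-1)(n-2)$, not four. Concretely, for $n=4$ the system $m \in \{0,2\} \pmod 4$, $m \in \{0,1\} \pmod 3$, $m \equiv 0 \pmod 2$ allows $\{0,4,6,10,12,16,18,22\} \pmod{24}$, whereas the theorem allows only $\{0,6,16,22\}$. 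The extra halving is exactly the information that is lost when you pass from $B_N(S^2)$ (where the $\alpha_i$ have orders $2n, 2n-2, 2n-4$ and the section must carry the central element $\omega$ of order 2 to the corresponding $\omega'$, cf.~Lemma~\ref{lemma:2torsion}) down to $\Mod_N(S^2)$. Your parenthetical about $\Delta^2$ hints at this but is never carried out, and as written the plan explicitly asserts that excluding $m \equiv 1 \pmod n$ plus CRT already yields the four classes, which is false for $n$ even. Relatedly, the exclusion of $m\equiv 1 \pmod n$ is itself only asserted, not argued. Finally, a small point on the converse for $n=3$: the proposed ``double cover branched at three marked points together with a fourth point at infinity'' does not descend to $\Conf_3(\CP^1)$, since there is no canonical choice of fourth branch point; the paper instead realizes the $m \equiv 2 \pmod 3$ case via the cross-ratio (Proposition~\ref{proposition:3pts}), taking $\lambda$ a primitive sixth root of unity, and reserves the elliptic-curve construction for $n=4$.
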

Gon\c calves-Guaschi did not give any explicit construction of sections, even in the $n = 3$ case. Our first main theorem addresses the case $m \equiv 0 \pmod{n(n-1)(n-2)}$, giving a construction of a family of sections based on the idea of ``cabling'' of braids.
\begin{maintheorem}\label{theorem:cabling}
For any $n \ge 3$ and any $m$ divisible by $n(n-1)(n-2)$, there is a section $S: \Conf_n(S^2) \to \Conf_{n,m}(S^2)$. 
\end{maintheorem}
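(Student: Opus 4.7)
The plan is to construct $S$ explicitly via a \emph{cabling} procedure built from Möbius transformations. The crucial numerical observation is that $n(n-1)(n-2)$ equals the number of ordered triples of distinct indices from $\{1,\ldots,n\}$, and any ordered triple of distinct points on $S^2$ determines a unique Möbius transformation carrying it to $(0,1,\infty)$.

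Writing $m = k\,n(n-1)(n-2)$, choose $Q \subset S^2 \setminus \{0,1,\infty\}$ of size $k$ whose orbit $\tilde{Q} := S_3 \cdot Q$ under the action of the subgroup $S_3 \leq \PSL_2(\C)$ permuting $\{0,1,\infty\}$ is free, so $|\tilde{Q}| = 6k$; a generic $Q$ suffices. For $\{p_1,\ldots,p_n\} \in \Conf_n(S^2)$ and an ordered triple of distinct indices $(i,j,l)$, let $M_{ijl} \in \PSL_2(\C)$ be the unique Möbius transformation with $M_{ijl}(p_i) = 0$, $M_{ijl}(p_j) = 1$, $M_{ijl}(p_l) = \infty$. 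By $S_3$-invariance of $\tilde{Q}$, the \emph{cable} $C_{\{i,j,l\}} := M_{ijl}^{-1}(\tilde{Q})$ depends only on the unordered triple, with cardinality $6k$. Define
\[
S(\{p_1,\ldots,p_n\}) := \{p_1,\ldots,p_n\} \cup \bigcup_{\{i,j,l\}} C_{\{i,j,l\}},
\]
summed over unordered $3$-subsets of $\{1,\ldots,n\}$, totaling $n + 6k\binom{n}{3} = n + m$ points whenever all added points are distinct. For $n = 3$ the single cable $M_{123}^{-1}(\tilde{Q})$ is automatically disjoint from $\{p_1, p_2, p_3\}$ (as $M_{123}$ maps these into $\{0,1,\infty\}$) and internally distinct, so the construction directly produces a valid section.

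The principal obstacle is establishing global distinctness for $n \geq 4$: on certain complex codimension-$1$ subvarieties of $\Conf_n(S^2)$ --- for instance where some cross-ratio $[p_i, p_j, p_l, p_s]$ lies in $\tilde{Q}$, or where $M_{ijl}^{-1}(q) = M_{i'j'l'}^{-1}(q')$ for distinct unordered triples and $q, q' \in \tilde{Q}$ --- the cabled points collide, so no fixed $Q$ can work everywhere on the nose. To bypass this, I would invoke the obstruction-theoretic reduction mentioned in the introduction: since the fibers of $P \colon \Conf_{n,m}(S^2) \to \Conf_n(S^2)$ are aspherical (configuration spaces of punctured spheres), a bundle section $S$ exists if and only if a group-theoretic section $s \colon B_n(S^2) \to B_{n,m}(S^2)$ does. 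The cabling naturally defines such a homomorphism on the level of fundamental groups --- any loop in $\Conf_n(S^2)$ can be perturbed to miss the real-codimension-$2$ collision locus, so the cabled loop represents a definite element of $B_{n, m}(S^2)$. The technical crux, where I anticipate the bulk of the work, is verifying that this assignment is well-defined on homotopy classes (the perturbation ambiguity across the collision locus must be trivial in $B_{n, m}(S^2)$, which is addressable by a transversality analysis of two-parameter families) and that $p \circ s = \id$; standard obstruction theory then promotes this group-theoretic section to the desired bundle section $S$.
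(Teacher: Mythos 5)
Your geometric cabling via Möbius transformations is a natural idea and the counting works, but the proposed fix for the collision problem does not go through. Once you pass to $\pi_1$, you obtain a homomorphism $\pi_1(U) \to B_{n,m}(S^2)$ where $U \subset \Conf_n(S^2)$ is the open dense locus on which all cabled points are distinct, and the issue is whether this factors through the surjection $\pi_1(U) \onto B_n(S^2)$. The kernel of that surjection is normally generated by small meridional loops around the (complex codimension-$1$) collision locus, and a generic such loop, under your map, is a full twist of the two colliding new points about one another --- a nontrivial element of $B_{n,m}(S^2)$. So the ``perturbation ambiguity'' you invoke is genuinely nontrivial, and the transversality argument you defer is precisely where the construction breaks. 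In other words, the descent is an honest obstruction, not a bookkeeping matter, and no fixed $\tilde Q$ will make it vanish.

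The paper sidesteps this entirely by doing the cabling at the group level rather than pointwise-geometrically. It defines a homomorphism $c_v \colon B_n \to B_{nk}$ depending on a cabling vector $v$ encoding the internal twisting of each strand, and shows (diagram \eqref{equation:diagram}) that this descends to $B_n(S^2) \to B_{nk}(S^2)$ if and only if $c_v(R_n)$ lies in the normal closure of $R_{nk}$. The two key lemmas are then a framing computation (Lemma \ref{lemma:framing}: the doubly-twisted cable $R_n(k)$ is trivial in $B_{nk}(S^2)$) and an explicit choice of cabling vector with $t = 2n-4$ for which $c_v(R_n) = R_n(k)$ (Lemma \ref{lemma:existence}) --- this is exactly the piece of data missing from your construction, since a fixed $Q$ determines \emph{some} cabling but one must check it is the \emph{right} one. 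The paper's Remark \ref{remark:spacelevel} does give a geometric construction in the spirit of yours, but it avoids collisions outright by cabling each old point $z_i$ with preimages $R_{z_i}^{-1}(\epsilon)$ for $\epsilon$ chosen very large and configuration-dependent, so that the new points cluster tightly near the $z_i$; a fixed reference set $\tilde Q$, as in your approach, cannot achieve this.
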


By virtue of the identification $S^2 \cong \CP^1$, the section problem on $S^2$ has deep connections with algebraic geometry; indeed $\Conf_{n}(\CP^1)$ is a quasi-projective algebraic variety. Accordingly, we will switch between the equivalent notations $\Conf_n(S^2)$ and $\Conf_n(\CP^1)$ as the situation dictates. The algebro-geometric perspective is exploited in Section \ref{section:34}, where we study the special cases $n = 3$ and $n = 4$. We construct some exceptional sections by making use of the cross-ratio for $n=3$ and the theory of elliptic curves for $n = 4$. Note that for $n = 4$, the residues appearing in Theorem \ref{theorem:GG} are $0,6,16,22$ mod $24$. 

\begin{maintheorem}\label{theorem:34}
For any $m \ge 0$ satisfying $m \equiv 0,2 \pmod 3$, there exists an algebraic map $S$ that gives a section of the bundle $\Conf_{3,m}(\CP^1) \to \Conf_3(\CP^1)$.

For any $m \ge 70$ such that $m$ is congruent to one of $0,6,16,22$ mod $24$, there is a section $S: \Conf_4(\CP^1) \to \Conf_{4,m}(\CP^1)$.
\end{maintheorem}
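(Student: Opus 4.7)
The plan is to treat $n=3$ and $n=4$ separately. In each case we exhibit explicit algebraic sections for small residues modulo $n(n-1)(n-2)$ using the equivariant geometry of $\CP^1$ (respectively of an associated elliptic curve), and concatenate them with Theorem \ref{theorem:cabling} to cover all sufficiently large $m$ in the allowed residue classes.

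\textbf{The case $n=3$.} The goal is to construct algebraic sections for $m=2$ and $m=3$; combined with Theorem \ref{theorem:cabling} applied to multiples of $6$, these handle all $m\equiv 0,2\pmod 3$. For $m=2$: an unordered triple $\{p_1,p_2,p_3\}$ on $\CP^1$ admits an essentially unique cyclic Möbius permutation (a pair of mutually inverse order-$3$ transformations), and their two common fixed points---roots of a quadratic in the symmetric functions of the $p_i$---form a canonical pair. For $m=3$: for each pair $\{p_i,p_j\}$, the unique Möbius involution swapping $p_i,p_j$ and fixing the third point $p_k$ has a second fixed point $q_k$, and the triple $\{q_1,q_2,q_3\}$ is canonical. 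Writing $m=6k+r$ with $r\in\{0,2,3,5\}$ then expresses every allowed $m$ as a concatenation of Theorem \ref{theorem:cabling} and these two atomic constructions, and one checks that the atomic pair, the atomic triple, and the cabled points occupy disjoint Möbius-equivariant loci on $\CP^1$ (specifically, $S_3$-orbits of sizes $2$ and $3$ on a normalized copy of $\CP^1$, plus shrinking neighborhoods of the $p_i$).

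\textbf{The case $n=4$.} Here we use the branched double cover $\pi\colon E\to\CP^1$ with branch locus $\{p_1,\ldots,p_4\}$, an elliptic curve varying algebraically in the configuration. The hyperelliptic involution $\iota$ equals $[-1]$ for any branch-point origin (since the difference of any two branch points lies in $\Jac(E)[2]$), and consequently the subgroup $E[4]\subset E$ is independent of this choice. For $m=6$: project $E[4]\setminus E[2]$ to $\CP^1=E/\iota$, yielding $12/2=6$ canonical new points, none of which are branch points. For $m=16$: the 3-torsion subgroups $E[3]_{p_i}$ depend on the origin, but the four translates are pairwise disjoint (since $\Jac(E)[2]\cap\Jac(E)[3]=0$), and together contribute $4\times 4=16$ new points after projecting to $\CP^1$. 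For $m=22$: combine the two, with disjointness of the 3- and 4-torsion contributions following from $\Jac(E)[3]\cap\Jac(E)[4]=0$. For $m\ge 70$ with $m\equiv r\pmod{24}$ and $r\in\{0,6,16,22\}$, write $m=24k+r$ and concatenate Theorem \ref{theorem:cabling} with the $r$-construction.

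\textbf{Main obstacle.} The principal technical challenge is verifying non-collision of all the added points. The exceptional elliptic-curve constructions place points at specific torsion locations on $\CP^1$, while the cabling of Theorem \ref{theorem:cabling} places points in shrinking neighborhoods of the original $p_i$; the bound $m\ge 70$ provides enough room, as a function of the number of cabled strands, to guarantee that these two classes of added points remain disjoint uniformly over $\Conf_4(\CP^1)$. Carefully tracking these distances---and checking that the elliptic-curve constructions genuinely descend to algebraic maps on the unordered configuration space---is the delicate step of the argument.
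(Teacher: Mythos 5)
Your approach is close to the paper's in spirit but has one genuine gap: the statement for $n=3$ requires an \emph{algebraic} section, and your proof does not deliver one. For $m=2$ and $m=3$ your ``atomic'' constructions are in fact the paper's cross-ratio construction specialized to the exceptional values $\lambda=\zeta$ (a primitive sixth root of unity, giving a $2$-valued cross-ratio) and $\lambda=-1$ (giving a $3$-valued cross-ratio), and these are algebraic and $\PGL_2(\C)$-equivariant. But you then dispatch the residual multiple of $6$ by invoking Theorem \ref{theorem:cabling}, which is a topological construction involving a continuous (and definitively non-algebraic) choice of the auxiliary function $\epsilon$. The resulting section is continuous but not a map of varieties, so the $n=3$ clause as stated is not proved. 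The paper avoids this by observing that for \emph{generic} $\lambda$ the cross-ratio is $6$-valued, so the unordered preimage $\times(z_1,z_2,z_3,\lambda)$ already provides an algebraic $6$-point block; then $m=2a+3b+6c$ with $a,b\in\{0,1\}$ is realized entirely algebraically by taking the appropriate $\lambda$'s in distinct $D_3$-orbits. Your argument would be repaired by simply replacing the cabling block with generic cross-ratio $6$-orbits.

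For $n=4$ you are on solid ground. Your use of $E[4]\setminus E[2]$ ($6$ points), $E[6]\setminus E[2]$ ($16$ points, which you reach via the four translates of $E[3]$ --- this is equivalent to the paper's $6$-torsion construction since $E[2]+E[3]=E[6]$), and their union ($22$ points, disjoint since $E[4]\cap E[6]=E[2]$) is correct and in fact slightly more efficient than the paper, which takes the clumsier route of $12$-torsion ($70$ points) for the residue $22\pmod{24}$. The well-definedness argument (origin-independence of $E[2k]$, the hyperelliptic involution being $[-1]$ for any $2$-torsion origin) is also right. However, your stated explanation for the bound $m\ge 70$ --- that it provides ``enough room'' for the cabled points to avoid the torsion points --- is off the mark. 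Disjointness of the cabled points from the torsion points is arranged by choosing $\epsilon$ in Remark \ref{remark:spacelevel} suitably large, continuously in the configuration, and this works for any $m$; it is not a large-$m$ phenomenon. The $70$ in the paper's statement is an arithmetic artifact of the paper's choice of torsion-point budget $\{0,6,16,70\}$; under your budget $\{0,6,16,22\}$ the sections in fact exist already for much smaller $m$, and the clause ``$m\ge 70$'' is then satisfied trivially.
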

The sections constructed for $m = 4$ are a hybrid of algebraic and non-algebraic sections; see Section \ref{section:4} for more details.

Finally, we address the case of $n \ge 6$. Here we are able to give a substantial strengthening of Theorem \ref{theorem:GG}, which, in combination with Theorem \ref{theorem:cabling}, gives a complete determination of those $m$ for which there exist sections $S: \Conf_n(S^2) \to \Conf_{n,m}(S^2)$. While our proof of Theorem \ref{theorem:main} is logically independent from Theorem \ref{theorem:GG}, we are indebted to the work of Gon\c calves-Guaschi both for some insights that we build off of, and moreover for the formulation of such a tantalizing problem. 

\begin{maintheorem}\label{theorem:main}
For $n \ge 6$, no section $S: \Conf_n(S^2) \to \Conf_{n,m}(S^2)$ exists unless $n(n-1)(n-2)$ divides $m$.
\end{maintheorem}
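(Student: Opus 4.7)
The plan is to obstruct group-theoretic sections $s : B_n(S^2) \to B_{n,m}(S^2)$ directly, building on the classical torsion structure of $B_n(S^2)$ rather than invoking Theorem~\ref{theorem:GG}. The crucial input is the theorem of Fadell--Van Buskirk: $B_n(S^2)$ contains three conjugacy classes of torsion elements $\alpha_0, \alpha_1, \alpha_2$ of orders $2n, 2(n-1), 2(n-2)$, with each $\alpha_i^{n-i}$ equal to the central full twist $\Delta^2$ of order $2$. Geometrically, $\alpha_i$ projects in $\Mod(S^2, n)$ to an order-$(n-i)$ rotation $\rho_i$ of $S^2$ that preserves the $n$-marked-point set, fixing exactly $i$ of those points at its poles.

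The first step is to lift each $s(\alpha_i)$ to a concrete rotation of $S^2$ respecting the partition. Since $s$ is injective, $s(\alpha_i)$ has order exactly $2(n-i)$, so its image in the quotient mapping class group $\Mod(S^2, n+m)$ (preserving the $n \sqcup m$ partition) has order $n - i$. By Nielsen realization for the sphere, which is elementary and reduces to the classification of finite subgroups of $\SO(3)$, this mapping class is induced by an actual order-$(n-i)$ rotation $R_i$ of $S^2$ preserving both point-sets. Since $R_i$ must restrict to $\rho_i$ on the $n$ old points, it has $i$ of them at its poles. A cycle count on the $m$ new points then forces
\[
m \equiv b_i \pmod{n-i}, \qquad b_i \in \{0, 1, \ldots, 2-i\},
\]
where $b_i$ records the number of new points sitting at the two rotation poles not already occupied by old points.

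The heart of the argument, which I expect to be the main obstacle, is to upgrade each congruence to $b_i = 0$. The plan is to exploit the fact that $R_0, R_1, R_2$ are not independent: they must be compatible with the multiplicative structure of $B_n(S^2)$ lifted through $s$. Since the center of $B_{n,m}(S^2)$ is cyclic of order $2$, generated by the full twist $\Delta_{n+m}^2$, the identity $\alpha_i^{n-i} = \Delta^2$ forces $R_i^{n-i} = \Delta_{n+m}^2$ for every $i$. Translating this together with the $B_n(S^2)$-conjugacy relations (conjugates $g \alpha_i g^{-1}$ lift to rotations bearing a precise geometric relationship to $R_i$, all simultaneously preserving the same partition) into constraints on the configuration of rotation axes and their fixed points, a new point sitting at a free pole of some $R_i$ should be shown to create incompatible orbit data on the remaining points. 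The $n \ge 6$ hypothesis will enter here precisely as the point where the finite rotation subgroups of $\SO(3)$ can no longer accommodate the required multiplicity of orbits.

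Once $b_i = 0$ has been established for all $i$, the divisibilities $n \mid m$, $(n-1) \mid m$, $(n-2) \mid m$ combine by the Chinese remainder theorem to give $\mathrm{lcm}(n, n-1, n-2) \mid m$. For odd $n$ this equals $n(n-1)(n-2)$ and we are done. For even $n$, where $\gcd(n, n-2) = 2$, an extra factor of $2$ is needed, which I would obtain by comparing the relation $s(\Delta^2) = \Delta_{n+m}^2$ against a suitable linking-number homomorphism on $B_{n,m}(S^2)$ that records total twisting between old and new strands; for even $n$ this abelian invariant carries a $\Z/2$ parity obstruction that supplies the missing factor.
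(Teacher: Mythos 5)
Your opening moves match the paper's use of Murasugi's torsion classification, and your cycle-counting on the rotations $R_i$ correctly recovers the congruence constraints $m \equiv 0 \pmod{n-2}$, $m \equiv 0 \text{ or } 1 \pmod{n-1}$, $m \equiv 0, 1, \text{ or } 2 \pmod{n}$. Indeed, this is essentially how Gon\c calves--Guaschi arrive at their four residues, and it is nice and clean: Nielsen realization on the sphere forces $s(\alpha_i)$ to be a finite-order rotation whose poles are determined by the fixed old points, and the only freedom is whether a new point sits at a free pole.

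The genuine gap is exactly where you flag it, at the step ``upgrade each congruence to $b_i=0$,'' and the plan you sketch for this step is not an argument and in my judgment cannot be turned into one as stated. The rotations $R_0, R_1, R_2$ are obtained by applying Nielsen realization separately to the cyclic groups $\langle s(\alpha_i)\rangle$; each lives in \emph{some} conjugate of $\SO(3)$, but there is no common realization, because $\langle \alpha_0,\alpha_1,\alpha_2\rangle$ is not finite (it contains $\sigma_1 = \alpha_1^{-1}\alpha_0$, hence all of $B_n(S^2)$). So ``configuration of rotation axes'' is not a well-posed object, and the relation $\alpha_i^{n-i}=\Delta^2$ is vacuous at the mapping-class level, where $R_i^{n-i}=\mathrm{id}$ holds automatically. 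The residues you are trying to kill --- $(b_0,b_1)\in\{(2,0),(0,1),(2,1)\}$ --- genuinely satisfy all the constraints visible to this kind of torsion/rotation analysis; that is precisely why Gon\c calves--Guaschi could not exclude them. Killing them is the entire content of Theorem~\ref{theorem:main}, and the paper does it by a completely different mechanism: it shows that if some torsion element fixes a new point (Lemma~\ref{lemma:torsion}, which is the $b_i\ne 0$ hypothesis in your language), then $\CRS(s(\sigma_1))$ gives a $\Gamma$-invariant reduction system, and an inductive/reducibility argument (Lemmas~\ref{lemma:reduciblesame}, \ref{lemma:reducibledistinct}) yields a contradiction with transitivity or minimality. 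The Nielsen--Thurston and canonical-reduction-system machinery is doing essential work there and is not a cosmetic alternative to the rotation picture. Finally, your even-$n$ factor-of-two via a ``linking-number homomorphism'' is also unsubstantiated: the abelianization of $B_{n,m}(S^2)$ is small and torsion, and it is not clear that the invariant you describe is well defined or detects what you need; in the paper this issue simply does not arise because the divisibility is extracted orbit-by-orbit inside the invariant subsurfaces (Lemma~\ref{lemma:reducibledistinct}) rather than by CRT.
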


It is natural to ask the extent to which these results are ``predicted'' by algebraic geometry. One way of making this precise is to ask whether every section $s: \Conf_n(S^2) \to \Conf_{n,m}(S^2)$ is homotopic to a map of varieties $s': \Conf_n(\CP^1) \to \Conf_{n,m}(\CP^1)$. Perhaps surprisingly, it turns out that for $n \ge 5$ the projection map $p: \Conf_{n.m}(\CP^1) \to \Conf_n(\CP^1)$ {\em has no section given by an algebraic map}. This is an instance of a theorem of Lin \cite[Theorem 3]{lin}, which builds off of the work of Earle--Kra (see \cite[Section 4.6]{earlekra1}). Thus the problem of constructing and classifying {\em continuous} sections of $\Conf_{n,m}(S^2) \to \Conf_n(S^2)$ is genuinely different from the analogous problem in the algebraic category. 

The results of this paper only concern the (non)existence of sections of $\Conf_{n,m}(S^2) \to \Conf_n(S^2)$. We have not addressed the question of {\em uniqueness}, but we believe that this is worthy of further study.

\begin{question}
Is every section $S: \Conf_n(S^2) \to \Conf_{n,m}(S^2)$ homotopic to one of the sections constructed in Theorems \ref{theorem:cabling} or Theorem \ref{theorem:34}?
\end{question}

Our method of proof for Theorem \ref{theorem:main} is to exploit the close relationship between $B_n(S^2)$ and the mapping class group $\Mod_n(S^2)$ of the $n$-punctured sphere. The analogous section problem for mapping class groups is amenable to the powerful theory of {\em canonical reduction systems}. We prove Theorem \ref{theorem:main} by analyzing the canonical reduction systems for a particularly convenient generating set of $\Mod_n(S^2)$. It is perhaps surprising that this approach totally avoids any analysis of the eponymous braid relation! Rather, building off of some observations by Gon\c calves-Guaschi, a key role is played by the torsion elements of $\Mod_n(S^2)$. 

\para{Contents of the paper} Section \ref{section:sphericalbraid} recalls some basic facts about the groups $B_n(S^2)$ and $\Mod_n(S^2)$. Section \ref{section:cable} discusses ``cabling'' of braids and exploits this to give the construction from which Theorem \ref{theorem:cabling} follows. In Section \ref{section:34}, we give some algebro-geometric constructions of sections in the cases $n = 3,4$, establishing Theorem \ref{theorem:34}. 

The proof of Theorem \ref{theorem:main} is carried out in Sections \ref{section:CRS} - \ref{section:B}. In Section \ref{section:CRS}, we review the theory of canonical reduction systems. In Section \ref{section:prelims} we establish some preliminary notions, leading to an overview of the proof given in Section \ref{section:overview}. The proof itself is carried out in Sections \ref{section:reducible}-\ref{section:B}.

\para{Acknowledgements} The authors would like to thank Ian Frankel for some helpful suggestions concerning Theorem \ref{theorem:34}, and Dmitri Gekhtman for some insights in {T}eichm\"uller theory. 

\section{The spherical braid group}\label{section:sphericalbraid}
In this section, we remind the reader of the relevant aspects of the theory of the spherical braid groups and their relationship with the mapping class groups of the punctured sphere.

\subsection{The (spherical) braid group} Let $S$ be a surface of finite type and let $\PConf_k(S)$ denote the space of {\em ordered} $k$-tuples of distinct points on $S$. The symmetric group $S_k$ acts on $\PConf_k(S)$ by permuting the ordering of the points; this action is by deck transformations. For any subgroup $G \le S_k$, there is an associated covering space $\PConf_k(S) \to \PConf_k(S)/G$. 

In the case $S = D^2$ the open disk and $G = S_k$, the space $\Conf_k(D^2) := \PConf_k(D^2)/S_k$ has fundamental group given by the classical braid group
\[
B_k := \pi_1(\Conf_k(D^2)).
\]

The primary surface of interest in this paper is the Riemann sphere $S = S^2$. There are two subgroups $G$ as above that will be of interest. First is $G = S_k$. We write
\[
\Conf_k(S^2) := \PConf_k(S^2)/ S_k;
\]
this is the space of {\em unordered} $k$-tuples of distinct points on $S^2$. The {\em spherical braid group} is defined to be the fundamental group of this space:
\[
B_k(S^2) := \pi_1(\Conf_k(S^2)).
\]
Secondly, suppose $k = n+m$, and consider the subgroup $G = S_n \times S_m$ of $S_k$. We write
\[
\Conf_{n,m}(S^2) := \PConf_k(S^2) / (S_n \times S_m).
\]
This can be viewed as the space of $n$ ``red'' points and $m$ ``blue'' points which are otherwise indistinguishable. This leads to a useful piece of terminology.

\begin{definition}[Old points, new points]\label{definition:oldnew}
Relative to the preceding discussion, we refer to the set of cardinality $n$ as the set of {\em old points}, and the set of cardinality $m$ as the set of {\em new points}. The set of old points is written $\{x_1, \dots, x_n\}$, and the set of new points is written $\{y_1, \dots, y_m\}$. 
\end{definition}

We define
\[
B_{n,m}(S^2) := \pi_1(\Conf_{n,m}(S^2)).
\]
There is an evident forgetful map $P: \Conf_{n,m}(S^2) \to \Conf_n(S^2)$ giving rise to a {\em surjective} homomorphism
\[
p: B_{n,m}(S^2) \to B_n(S^2).
\]
It is this $p$ that we seek to find (obstructions to) sections of. Recall that a {\em section} of a surjective group homomorphism $p: A \to B$ is a (necessarily injective) homomorphism $s: B \to A$ satisfying $p \circ s = \id$.

\subsection{A presentation of the spherical braid group} It is classically known that $B_n(S^2)$ has a presentation obtained by adding a single relation to Artin's presentation of the classical braid group $B_n$. Let $R_n$ be the word in $\sigma_1, \dots, \sigma_{n-1}$ given by
\begin{equation}\label{equation:Rn}
R_n = \sigma_1 \dots \sigma_{n-1} \sigma_{n-1} \dots \sigma_1.
\end{equation}
Then $B_n(S^2)$ has the following presentation:
\begin{align*}
B_n(S^2) = \langle \sigma_1, \dots, \sigma_{n-1} \mid	& [\sigma_i, \sigma_j]=1 \mbox{ for } \abs{i-j} > 1,\\
											&\sigma_i \sigma_{i+1} \sigma_{i} = \sigma_{i+1} \sigma_{i} \sigma_{i+1} \mbox{ for }1\le i \le n-2,\\
											& R_n =1 \rangle.
\end{align*}

The element $R_n$ is depicted in Figure \ref{figure:rn}.
\begin{figure}
\labellist
\small
\endlabellist
\includegraphics[scale=0.8]{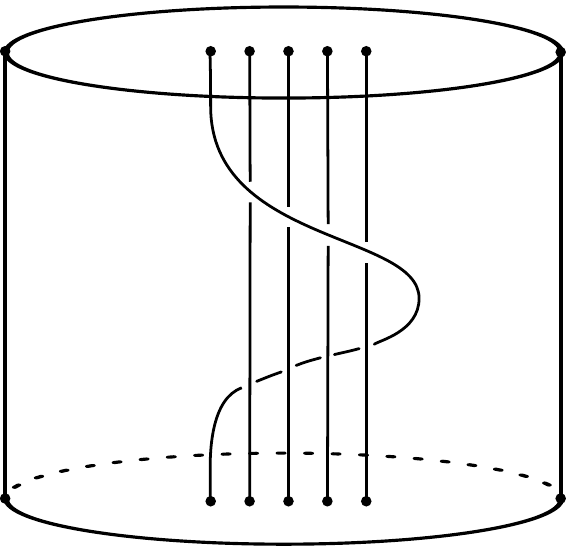}
\caption{The braid $R_n = \sigma_1 \dots \sigma_{n-1}^2 \dots \sigma_1 \subset D^2 \times [0,1]$.}
\label{figure:rn}
\end{figure}

\subsection{Torsion in the spherical braid group} In \cite{murasugi}, Murasugi determined the finite-order elements in $B_n(S^2)$. He showed that every finite-order element is conjugate to a power of one of the following three elements. Their properties are summarized in the table below. 
\begin{equation}\label{table:torsion}
\begin{array}{|c|ccccc|}
\hline
\mbox{Element}	& \mbox{Expression}						&&\mbox{Order}	&& \mbox{Permutation}\\ \hline
\alpha_0 			& \sigma_1 \dots \sigma_{n-1}				&&2n			&& (1\dots n)		\\
\alpha_1 			& \sigma_1 \dots \sigma_{n-2}\sigma_{n-1}^2	&&2n-2			&& (1 \dots n-1)		\\
\alpha_2 			& \sigma_1 \dots \sigma_{n-3} \sigma_{n-2}^2	&&2n-4			&& (1 \dots n-2)		\\ \hline
\end{array}
\end{equation}

\begin{remark}\label{remark:powers}
From the table, one can easily determine the permutation associated to any {\em power} of (a conjugate of) $\alpha_i$. Explicitly, $\alpha_i^k$ has permutation given by $(1 \dots n-i)^k$, which decomposes into $\gcd(k, n-i)$ disjoint $j$-cycles, where $j = (n-i) / \gcd(k, n-i)$.
\end{remark}

\subsection{An alternative generating set}
Although the set $\{\sigma_1, \dots, \sigma_{n-1}\}$ of ``standard'' generators for $B_n$ is the most widely-known, it is not the most useful for our purposes. The starting point for this discussion is the following elementary lemma whose proof is a direct calculation.

\begin{lemma} \label{lemma:alpha}
For $1 \le i \le n-2$, there is an equality
\[
\alpha_0^i \sigma_1 \alpha_0^{-i} = \sigma_{1+i}.
\]
of elements of $B_n$. Similarly, for $1 \le i \le n-3$, there is an equality
\[
\alpha_1^i \sigma_1 \alpha_1^{-i} = \sigma_{1+i}.
\]
\end{lemma}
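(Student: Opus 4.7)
The plan is to prove the stronger statement that conjugation by $\alpha_0$ cyclically shifts \emph{all} standard generators by one, i.e.\ that
\[
\alpha_0 \sigma_j \alpha_0^{-1} = \sigma_{j+1} \quad \text{for every } 1 \le j \le n-2,
\]
and then derive both halves of the lemma by induction on $i$.

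To establish this shifting relation, I would start with $\alpha_0 \sigma_j = \sigma_1 \sigma_2 \cdots \sigma_{n-1} \sigma_j$ and push $\sigma_j$ leftward using the far-commutativity relation $[\sigma_k, \sigma_j] = 1$ for $|k-j|>1$, until $\sigma_j$ sits to the immediate right of $\sigma_{j+1}$. At that point the local word reads $\sigma_j \sigma_{j+1} \sigma_j$, which the braid relation rewrites as $\sigma_{j+1} \sigma_j \sigma_{j+1}$. The newly-produced leading $\sigma_{j+1}$ then commutes with all of $\sigma_1, \ldots, \sigma_{j-1}$ (again by far-commutativity, since the closest index is $j-1$), so it can be pulled all the way to the left, leaving exactly $\sigma_{j+1} \alpha_0$. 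This is the main (and only nontrivial) calculation.

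Given this, the first statement follows by induction: assuming $\alpha_0^{i-1} \sigma_1 \alpha_0^{-(i-1)} = \sigma_i$ with $i \le n-2$, conjugating once more by $\alpha_0$ and applying the shifting relation to $\sigma_i$ yields $\sigma_{i+1}$, which is legal because $i \le n-2$ keeps us in the range where the relation holds. For the $\alpha_1$ statement, I would use the factorization $\alpha_1 = \alpha_0 \sigma_{n-1}$: if $1 \le j \le n-3$, then $\sigma_{n-1}$ commutes with $\sigma_j$, so
\[
\alpha_1 \sigma_j \alpha_1^{-1} = \alpha_0 (\sigma_{n-1} \sigma_j \sigma_{n-1}^{-1}) \alpha_0^{-1} = \alpha_0 \sigma_j \alpha_0^{-1} = \sigma_{j+1},
\]
and the same induction argument as before (now truncated at $i \le n-3$ because we need $j = i \le n-3$ at each step) gives $\alpha_1^i \sigma_1 \alpha_1^{-i} = \sigma_{1+i}$.

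I do not anticipate a real obstacle; the only thing to track carefully is the index bounds, which are precisely what prevent pushing the induction beyond $i = n-2$ (resp.\ $i = n-3$), since the shifting relation breaks down as soon as the index on the right would exceed $n-1$ (resp.\ $n-2$, because $\sigma_{n-1}$ no longer commutes past $\sigma_{n-2}$).
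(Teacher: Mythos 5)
Your proof is correct, and since the paper dispatches this lemma with the remark that it is ``a direct calculation,'' your argument is precisely the calculation the authors had in mind. The single shifting relation $\alpha_0\sigma_j\alpha_0^{-1}=\sigma_{j+1}$, proved by sliding $\sigma_j$ left via far-commutativity until one application of the braid relation produces a leading $\sigma_{j+1}$, together with the factorization $\alpha_1=\alpha_0\sigma_{n-1}$ and the observation that $\sigma_{n-1}$ commutes with $\sigma_j$ for $j\le n-3$, is exactly what the stated index bounds encode.
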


As an immediate corollary, we obtain our desired generating set.
\begin{lemma}\label{lemma:genset}
$B_n$, and hence its quotient $B_n(S^2)$, is generated by the set $\{\sigma_1, \alpha_0\}$.
\end{lemma}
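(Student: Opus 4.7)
The plan is to deduce this directly from Lemma \ref{lemma:alpha} together with Artin's classical presentation of $B_n$. Artin's theorem tells us that $B_n$ is generated by $\sigma_1, \sigma_2, \dots, \sigma_{n-1}$, so to prove the lemma it suffices to express each $\sigma_j$ with $2 \le j \le n-1$ as a word in $\sigma_1$ and $\alpha_0$.

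First, I would invoke Lemma \ref{lemma:alpha}: for each $1 \le i \le n-2$ we have the conjugation identity $\sigma_{1+i} = \alpha_0^i \sigma_1 \alpha_0^{-i}$ in $B_n$. Setting $j = 1+i$ and letting $i$ range over $1, \dots, n-2$ covers exactly $j = 2, \dots, n-1$. Thus every Artin generator of $B_n$ lies in the subgroup $\langle \sigma_1, \alpha_0 \rangle \le B_n$, proving that $B_n = \langle \sigma_1, \alpha_0\rangle$.

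Finally, for the statement about $B_n(S^2)$: the presentation recalled in the excerpt exhibits $B_n(S^2)$ as a quotient of $B_n$ by the normal closure of the single relation $R_n$, and any generating set for a group is automatically a generating set for any of its quotients. Hence the images of $\sigma_1$ and $\alpha_0$ (which we denote by the same symbols) generate $B_n(S^2)$.

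There is no real obstacle here: the argument is essentially a one-line reduction to Lemma \ref{lemma:alpha}. The only thing one should be careful about is to note that $\alpha_0 = \sigma_1 \cdots \sigma_{n-1}$ is patently a word in the Artin generators, so the inclusion $\langle \sigma_1, \alpha_0 \rangle \subseteq B_n$ together with the reverse inclusion just established gives equality.
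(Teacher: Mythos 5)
Your argument is correct and is exactly the paper's intended one: the paper states Lemma \ref{lemma:genset} as "an immediate corollary" of Lemma \ref{lemma:alpha} without writing out the details, and your write-up supplies precisely those details (conjugation of $\sigma_1$ by powers of $\alpha_0$ to recover all Artin generators, plus the observation that generating sets pass to quotients).
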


We will also have occasion to study the subgroup generated by the elements $\alpha_1$ and $\sigma_1$. This group admits the following convenient description.
\begin{lemma}\label{lemma:genset2}
Let $B_{n-1,1}(S^2) \leqslant B_n(S^2)$ be the subgroup consisting of braids that fix the point $x_n$. Then $B_{n-1,1}(S^2)$ is generated by the set $\{\sigma_1, \alpha_1\}$. 
\end{lemma}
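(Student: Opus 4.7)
The plan is to prove both inclusions $\langle \sigma_1, \alpha_1\rangle \subseteq B_{n-1,1}(S^2)$ and $B_{n-1,1}(S^2) \subseteq \langle \sigma_1, \alpha_1\rangle$ separately. The first inclusion is immediate from the permutation data in \eqref{table:torsion}: the braid $\sigma_1$ has permutation $(1\ 2)$, and $\alpha_1$ has permutation $(1\dots n-1)$, both of which fix $n$. Hence both generators lie in $B_{n-1,1}(S^2)$.

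For the reverse inclusion, I would first reduce to the statement that $B_{n-1,1}(S^2)$ is generated by the subset $\{\sigma_1, \dots, \sigma_{n-2}\}$ of the standard generators of $B_n(S^2)$. Granting this intermediate claim, Lemma \ref{lemma:alpha} supplies the identity $\alpha_1^i \sigma_1 \alpha_1^{-i} = \sigma_{1+i}$ for $1 \le i \le n-3$, so $\sigma_2, \dots, \sigma_{n-2}$ all lie in $\langle \sigma_1, \alpha_1\rangle$, and hence $\langle \sigma_1, \dots, \sigma_{n-2}\rangle \subseteq \langle\sigma_1,\alpha_1\rangle$, finishing the proof.

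The core of the argument is therefore to justify the intermediate claim that $B_{n-1,1}(S^2) = \langle \sigma_1, \dots, \sigma_{n-2}\rangle$. The plan is to extract this from the forgetful fibration
\[
P\colon \Conf_{n-1,1}(S^2) \to \Conf_1(S^2) = S^2,
\]
whose fiber over a point $* \in S^2$ is $\Conf_{n-1}(S^2 \setminus \{*\}) \cong \Conf_{n-1}(\C)$. The associated long exact sequence of homotopy groups reads
\[
\pi_2(S^2) \to \pi_1(\Conf_{n-1}(\C)) \to B_{n-1,1}(S^2) \to \pi_1(S^2),
\]
and since $\pi_1(S^2) = 1$, the fiber-inclusion map $B_{n-1} = \pi_1(\Conf_{n-1}(\C)) \to B_{n-1,1}(S^2)$ is surjective. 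Under this map, the Artin generators $\sigma_1, \dots, \sigma_{n-2}$ of $B_{n-1}$ are sent to the elements of $B_n(S^2)$ denoted by the same symbols. This gives the desired generating set.

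I do not expect serious obstacles; the main thing to be careful about is the identification of the image of the fiber-inclusion generators with the ambient $\sigma_i$'s in $B_n(S^2)$, which is a matter of unwinding definitions and arranging basepoints so that the fiber of $P$ is a standard disk containing the points $x_1, \dots, x_{n-1}$ with $x_n$ playing the role of the basepoint in the base.
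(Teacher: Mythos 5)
Your proof is correct, but the route you take to the intermediate claim that $\{\sigma_1, \dots, \sigma_{n-2}\}$ generates $B_{n-1,1}(S^2)$ is genuinely different from the paper's. The paper argues group-theoretically: it uses the short exact sequence
\[
1 \to P_n(S^2) \to B_{n-1,1}(S^2) \to S_{n-1} \to 1,
\]
observes that the images of $\sigma_1, \dots, \sigma_{n-2}$ generate $S_{n-1}$, and then shows that the pure spherical braid group $P_n(S^2)$ is contained in $H := \langle \sigma_1, \dots, \sigma_{n-2}\rangle$ by invoking the relation $R_n = 1$ (which puts $\sigma_{n-1}^2$ in $H$) and Artin's theorem that the elements $A_{i,j}$ generate the classical pure braid group $P_n$. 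By contrast, you use the Fadell--Neuwirth fibration $\Conf_{n-1,1}(S^2) \to S^2$ and the simple connectivity of $S^2$ to conclude that the fiber inclusion $B_{n-1} = \pi_1(\Conf_{n-1}(\C)) \to B_{n-1,1}(S^2)$ is surjective, so the Artin generators of $B_{n-1}$ push forward to generators. Your approach is cleaner in that it bypasses the explicit $A_{i,j}$ bookkeeping entirely; what it requires instead is the topological input that the forgetful map on partially unordered configurations of $S^2$ is a fibration with fiber $\Conf_{n-1}(\C)$, and a careful basepoint/identification argument (which you flagged) to see that the fiber generators really are the ambient $\sigma_i$. Both arguments are legitimate; the paper's is purely algebraic and self-contained given the presentation of $B_n(S^2)$, while yours trades that for the fibration machinery and is shorter once that machinery is in hand.
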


\begin{proof}
In light of Lemma \ref{lemma:alpha}, it suffices to show that $\sigma_1,...,\sigma_{n-2}$ generates $B_{n-1,1}(S^2)$. Let $P_n(S^2)$ be the kernel of the map $p: B_n(S^2)\to S_n$ recording the permutation of points. There is a short exact sequence
\[
1\to P_{n}(S^2)\to B_{n-1,1}(S^2)\xrightarrow{\pi} S_{n-1}\to 1.
\]
As $\{\pi(\sigma_1),...,\pi(\sigma_{n-2})\}$ generates $S_{n-1}$, it suffices to show that the kernel $P_n(S^2)$ is contained in the subgroup $H = \pair{\sigma_1,...,\sigma_{n-2}}$ of $B_{n-1,1}(S^2)$. Since $R_n=1\in B_n(S^2)$, it follows that $\sigma_{n-1}^2\in H$. Let $P_n$ be the kernel of  the map $B_n\to S_n$ recording the permutation of points. We define the subgroup $G \leqslant B_n$ by
\[
G:= \pair{\sigma_1,...,\sigma_{n-2},\sigma_{n-1}^2}.
\]
Let $q: B_n \to B_n(S^2)$ be the natural projection from the classical braid group to the spherical braid group. By definition $q(G)=H$. For $1 \le i < j \le n-1$, we define
\[A_{i,j}=\sigma_i^{-1}\sigma_{i+1}^{-1}...\sigma_{j-2}^{-1}\sigma_{j-1}^2\sigma_{j-2}...\sigma_i.
\]
According to Artin \cite[Theorem 17]{artin}, the set $\{A_{i,j}\}$ generates $P_n$, and evidently $\{A_{i,j}\}\subset G$, so that $P_n \leqslant G$. Therefore we have that $q(P_n)\leqslant H$. Since we know that $q(P_n)=P_n(S^2)$, we get that $P_n(S^2)\leqslant H$.
\end{proof}

\subsection{From braid groups to mapping class groups}\label{section:braidtomod} Let $S$ be a surface and let $\Mod_n(S)$ denote the mapping class group of $S$ relative to $n$ unordered marked points (equivalently the marked points can be viewed as punctures). We also define $\Mod_{n,m}(S)$ as the subgroup of $\Mod_{n+m}(S)$ consisting of mapping classes that preserve a partitioning of the marked points into two sets of cardinalities $n,m$ respectively. 

The ``point-pushing construction'' (see, e.g. \cite[Section 9.1.4]{FM}) yields a homomorphism
\[
\mathcal P: B_n(S) \to \Mod_n(S).
\]
For most surfaces, $\mathcal P$ is an isomorphism onto its image, but this is not the case for $S = S^2$. Rather, there is the following short exact sequence (again, see \cite[Section 9.1.4]{FM}):
\[
\xymatrix{
1 \ar[r]	& \Z/2\Z \ar[r] &	 B_n(S^2) \ar[r]^-{\mathcal P}& \Mod_n(S^2) \ar[r]& 1.
}
\] 
The nontrivial element of the kernel is the central element $\omega$. It is characterized by the property that it is the unique element of order $2$ in $B_n(S^2)$. 

\begin{lemma}\label{lemma:2torsion}
There is a unique element $\omega \in B_n(S^2)$ of order $2$. 
\end{lemma}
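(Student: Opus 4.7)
The plan is to combine Murasugi's classification of torsion in $B_n(S^2)$ with the fact that the pure mapping class group of an $n$-punctured sphere is torsion-free for $n \ge 3$. Let $x \in B_n(S^2)$ be any element of order $2$. By Murasugi, $x$ is conjugate to $\alpha_i^k$ for some $i \in \{0,1,2\}$ and some integer $k$. Since $\alpha_i$ has order $2(n-i)$, the unique power of order exactly $2$ is $\alpha_i^{n-i}$, and so $x$ is conjugate to one of the three explicit elements $\alpha_0^n, \alpha_1^{n-1}, \alpha_2^{n-2}$.

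Next, by Remark \ref{remark:powers}, each of these three elements has trivial permutation, hence lies in the pure spherical braid group $P_n(S^2) := \ker(B_n(S^2) \to S_n)$. I would then restrict the short exact sequence $1 \to \Z/2 \to B_n(S^2) \xrightarrow{\mathcal P} \Mod_n(S^2) \to 1$ to $P_n(S^2)$ to obtain
\[
1 \to \Z/2 \to P_n(S^2) \to P\Mod_n(S^2) \to 1,
\]
where $P\Mod_n(S^2)$ is the image in $\Mod_n(S^2)$. Via Nielsen realization, any finite-order element of $P\Mod_n(S^2)$ is represented by an isometry of a hyperbolic structure on $S^2 \setminus \{n \text{ points}\}$ fixing every puncture; but any M\"obius transformation of $\CP^1$ fixing $n \ge 3$ distinct points is the identity, so $P\Mod_n(S^2)$ is torsion-free. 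It follows that $P_n(S^2)$ contains a unique element of order $2$, which must be $\omega$, and therefore $\alpha_i^{n-i} = \omega$ for each $i \in \{0,1,2\}$.

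To finish, note that the kernel $\langle \omega \rangle$ of $\mathcal P$ is central in $B_n(S^2)$ (the conjugation action factors through $\Aut(\Z/2)$, which is trivial), so the conjugacy class of $\omega$ is $\{\omega\}$. Since $x$ is conjugate to $\omega$, we conclude $x = \omega$. The step carrying the real content is the torsion-freeness of $P\Mod_n(S^2)$, which requires Nielsen realization or an equivalent input. A more calculational alternative would be to use the relation $R_n = 1$ to rewrite each $\alpha_i^{n-i}$ directly as the image of the full twist $\Delta^2 \in B_n$, bypassing the mapping class group entirely, but this is considerably more involved and less conceptually transparent.
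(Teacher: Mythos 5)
Your argument is correct, but it routes through genuinely different machinery than the paper. The paper proves $\alpha_0^n = \alpha_1^{n-1} = \omega$ and that $\alpha_2^{n-2}$ is conjugate to $\omega$ by direct (and, in the last case, admittedly tedious) word calculations in $B_n$, then invokes centrality of $\omega$ to collapse the conjugacy classes. You instead observe that each $\alpha_i^{n-i}$ lies in $P_n(S^2)$, restrict the extension $1 \to \Z/2 \to B_n(S^2) \to \Mod_n(S^2) \to 1$ to the pure subgroup, and appeal to torsion-freeness of $P\Mod_n(S^2)$ to force $\alpha_i^{n-i} = \omega$ for all three $i$. This cleanly avoids the computation but imports a nontrivial external input (Nielsen realization for punctured spheres, or equivalently the classical fact that a nontrivial finite-order orientation-preserving homeomorphism of $S^2$ is conjugate to a rotation and so has at most two fixed points). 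One small quibble: a finite-order element of $P\Mod_n(S^2)$ is realized by a finite-order diffeomorphism of $S^2$, not a priori a M\"obius transformation; the conclusion that it fixes at most two points comes from conjugating to a rotation (or, if you prefer, realizing it conformally), so the phrasing should be adjusted, but the idea is sound. The paper's route has the virtue of staying entirely inside the braid-group presentation and is self-contained; yours is shorter, more conceptual, and makes the role of centrality and torsion-freeness transparent, at the cost of invoking Nielsen realization. Both are valid proofs.
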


\begin{proof}
Define $\omega := (\sigma_1 \dots \sigma_{n-1})^n$. This determines a central element of $B_n$, hence $\omega$ is central in $B_n(S^2)$ as well. By Murasugi's classification of torsion elements (Table \eqref{table:torsion}), any element $\beta$ of order $2$ is conjugate to $\alpha_{i}^{n-i}$ for some $i = 0,1,2$. The element $\alpha_0^n$ is equal to $\omega$. As $\omega$ is central, any conjugate of $\alpha_0^n$ is equal to $\omega$. It is straightforward to verify that the equality
\[
\alpha_1^{n-1} = \alpha_0^n = \omega
\]
holds in $B_n$, hence any conjugate of $\alpha_1^{n-1}$ is also equal to $\omega$. Finally, a direct (albeit tedious) calculation establishes the equality
\[
(\sigma_1 \dots \sigma_{n-1}) \alpha_2^{n-2} (\sigma_{n-1} \dots \sigma_1) = \omega
\]
in $B_n$. The elements $(\sigma_1 \dots \sigma_{n-1}) \alpha_2^{n-2} (\sigma_{n-1} \dots \sigma_1)$ and $\alpha_2^{n-2}$ are equal in $B_n(S^2)$, so that any conjugate of $\alpha_2^{n-2}$ is equal to $\omega$ as well.
\end{proof}

The classification of torsion in $B_n(S^2)$ given in \eqref{table:torsion} ports directly over to $\Mod_n(S)$. The only difference between torsion in the mapping class group is that the element $\alpha_i$ has order $n-i$ as opposed to $2n-2i$. As mapping classes, the torsion elements $\alpha_i$ have simple geometric representatives. Arrange the points $x_1, \dots, x_{n-1-i}$ at equal intervals on the equator, and place any remaining points at the north and south poles. As a mapping class, $\alpha_i$ is then represented by a rotation of the sphere by an angle of $2\pi/(n-i)$ through the plane of the equator. \\

It is {\em a priori} possible that a section $s: B_n(S^2) \to B_{n,m}(S^2)$ could fail to descend to a section $\bar s: \Mod_n(S^2) \to \Mod_{n,m}(S^2)$. If this were the case, we would not be able to prove Theorem \ref{theorem:main} by moving to the setting of the mapping class group. However, we show here that this is not the case. 

\begin{lemma}\label{lemma:braidtomod}
Suppose that $s: B_n(S^2) \to B_{n,m}(S^2)$ is a section. Then there is a section $\bar{s}: \Mod_n(S^2) \to \Mod_{n,m}(S^2)$.
\end{lemma}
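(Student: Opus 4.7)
The approach is the standard ``lift-apply-project'' construction. I will first fit the source and target of $s$ into a commutative diagram
\[
\xymatrix{
B_{n,m}(S^2) \ar[r]^{p} \ar[d]_{\mathcal{P}} & B_n(S^2) \ar[d]^{\mathcal{P}} \\
\Mod_{n,m}(S^2) \ar[r]^{\bar{p}} & \Mod_n(S^2)
}
\]
whose vertical arrows are the point-pushing homomorphisms and whose bottom row is the natural forgetful map. Each vertical arrow is a central extension by $\Z/2\Z$: on the right the kernel is generated by $\omega\in B_n(S^2)$ from Lemma \ref{lemma:2torsion}, while on the left, using $B_{n,m}(S^2)\leqslant B_{n+m}(S^2)$, the kernel is generated by the unique order-$2$ element $\omega':=(\sigma_1\cdots\sigma_{n+m-1})^{n+m}$ of $B_{n+m}(S^2)$, which has trivial permutation and hence does lie in $B_{n,m}(S^2)$. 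Commutativity of the diagram is the standard fact that point-pushing commutes with forgetting new marked points.

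Given $\phi\in\Mod_n(S^2)$, I will pick any lift $\beta\in B_n(S^2)$ with $\mathcal{P}(\beta)=\phi$ and set
\[
\bar{s}(\phi):=\mathcal{P}(s(\beta)).
\]
The critical issue, where I expect the real content of the argument to lie, is well-definedness: the two possible lifts of $\phi$ differ by $\omega$, so I must verify that $\mathcal{P}(s(\omega))=1\in\Mod_{n,m}(S^2)$. Since any section is injective, $s(\omega)$ is an element of order exactly $2$ in $B_{n,m}(S^2)\leqslant B_{n+m}(S^2)$; by Lemma \ref{lemma:2torsion} applied to $B_{n+m}(S^2)$, such an element is unique, forcing $s(\omega)=\omega'$. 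This $\omega'$ lies in the kernel of $\mathcal{P}$ by construction, so $\mathcal{P}(s(\omega))=1$ as needed.

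The remainder is routine diagram-chasing. For the homomorphism property, if $\beta_1,\beta_2$ lift $\phi_1,\phi_2$ then $\beta_1\beta_2$ lifts $\phi_1\phi_2$, giving
\[
\bar{s}(\phi_1\phi_2)=\mathcal{P}(s(\beta_1\beta_2))=\mathcal{P}(s(\beta_1))\mathcal{P}(s(\beta_2))=\bar{s}(\phi_1)\bar{s}(\phi_2).
\]
For the section property, commutativity of the diagram together with $p\circ s=\id$ yields $\bar{p}(\bar{s}(\phi))=\bar{p}(\mathcal{P}(s(\beta)))=\mathcal{P}(p(s(\beta)))=\mathcal{P}(\beta)=\phi$. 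The one subtle point throughout is the reduction of well-definedness to the uniqueness of the order-$2$ element in the spherical braid group, which is precisely the content of Lemma \ref{lemma:2torsion}; once that is in hand, everything else is formal.
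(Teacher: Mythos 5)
Your proof is correct and follows essentially the same approach as the paper: both set up the commutative square of central $\Z/2$-extensions, define $\bar s$ by lifting, applying $s$, and projecting, and reduce well-definedness to showing $s(\omega_n)=\omega_{n+m}$, which follows from the injectivity of $s$ together with the uniqueness of the order-$2$ element guaranteed by Lemma \ref{lemma:2torsion}. Your extra remark that $\omega'$ has trivial permutation and hence lies in $B_{n,m}(S^2)$ is a small point of care that the paper leaves implicit, but otherwise the arguments coincide.
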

\begin{proof}
The relationship between the four groups under study is summarized by the following diagram, where the rows are short exact sequences:
\[
\xymatrix{
1 \ar[r] 	& \pair{\omega_{n+m}} \ar[r] \ar[d]	& B_{n,m}(S^2) \ar[r] \ar[d]_{p}	& \Mod_{n,m}(S^2) \ar[r] \ar[d]_{\bar p}	& 1\\
1 \ar[r]	& \pair{\omega_n} \ar[r] 		& B_n(S^2) \ar[r] \ar@/_/[u]_{s}	& \Mod_n(S^2) \ar[r]			& 1.
}
\]
Given $f \in \Mod_n(S^2)$, let $\tilde f \in B_n(S^2)$ be an arbitrary lift. We claim that 
\[
\bar s(f) := s(\tilde f) \pmod{\pair{\omega_{n+m}}}
\]
gives a well-defined section homomorphism. To see this, recall from Lemma \ref{lemma:2torsion} that $\omega_n \in B_n(S^2)$ is the unique element of order $2$ in $B_n(S^2)$, and that $\omega_{n+m}$ is similarly characterized as an element of $B_{n+m}(S^2)$. Since $s$ is a section, it follows that $s(\omega_n) = \omega_{n+m}$. Thus $\bar s(f)$ is well-defined as an element of $B_{n,m}(S^2)/\pair{\omega_{n+m}} \cong \Mod_{n,m}(S^2)$. It is straightforward to verify that $\bar s$ determines a homomorphism and that $\bar p \circ \bar s = \id$.
\end{proof}

\section{Cabling braids on the sphere}\label{section:cable} This section is dedicated to the proof of Theorem \ref{theorem:cabling}. We begin with a discussion of the {\em cabling construction}. We then analyze the obstruction to cabling in the setting of the spherical braid group, leading to the proof of Theorem \ref{theorem:cabling}. 

\subsection{Cabling} Cabling has a simple intuitive description. Given a braid $\beta$, one imagines each strand as actually being composed of a cable of smaller strands. This should furnish a homomorphism $c_k: B_n \to B_{nk}$, where $k$ is the number of strands in each cable. Making this precise requires additional data: one must specify the {\em internal} braid structure that each cable possesses. 

\begin{definition}[Cabling vector]\label{definition:cvec}
Let $\phi \in B_k$ be fixed, and let $a_1, \dots, a_{n-1}, c, t$ be arbitrary integers. The {\em cabling vector} is the tuple $v \in B_k \times \Z^{n+1}$ given by
\[
v := (\phi; a_1, \dots, a_{n-1}, c, t).
\]
\end{definition}

\begin{remark}\label{remark:decable}
As we are ultimately interested in constructing sections of $B_{n,m}(S^2) \to B_n(S^2)$, we will need to understand when it is possible to ``de-cable'', that is, when there exists a homomorphism $p: B_{nk} \to B_n$ such that $p \circ c_v = \id$. It is geometrically clear that such a $p$ exists whenever the element $\phi$ is contained in the subgroup $B_{k-1,1}$ consisting of braids where one strand is required to return to its starting point. In this case, $c_v$ is valued in the subgroup $B_{n, n(k-1)}$. 
\end{remark}

\begin{figure}
\labellist
\small
\endlabellist
\includegraphics{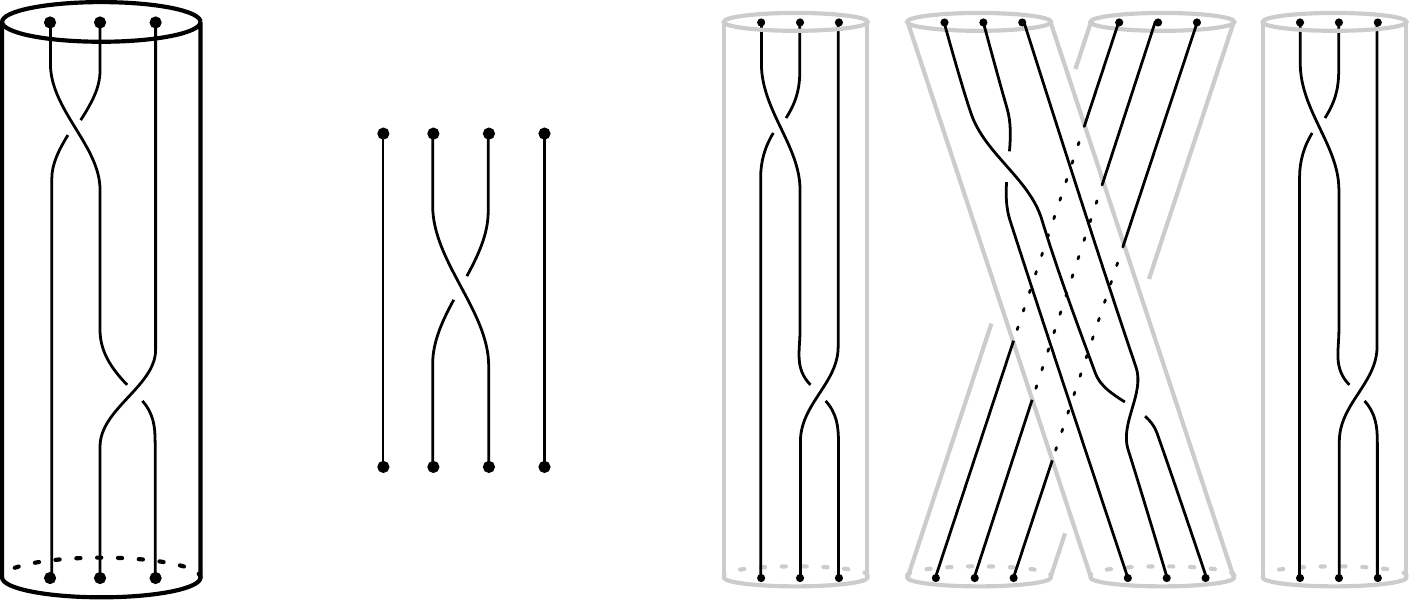}
\caption{The cabling procedure. Left to right: the braid $\phi \in B_3$, the generator $\sigma_2$, and the cabled braid $c_v(\sigma_2)$. Here $a_2 = 1, c = 1, t= 1$.}
\label{figure:cable}
\end{figure}

\begin{lemma}[Cabling construction]\label{lemma:cable}
Let $v = (\phi; a_1, \dots, a_{n-1}, c, t)$ be a cabling vector. Then there is a homomorphism
\[
c_v: B_n \to B_{nk}.
\]
Under $c_v$, each generator $\sigma_i$ is replaced by a cable: the $i^{th}$ strand is replaced with the braid $\phi^{a_i}$, the $(i+1)^{st}$ is replaced with $\phi^{t - a_i}$, and all remaining strands are replaced with the braid $\phi^c$. 
\end{lemma}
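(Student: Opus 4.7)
The plan is to define $c_v$ on the Artin generators $\sigma_1, \dots, \sigma_{n-1}$ of $B_n$ and verify that the defining relations $[\sigma_i, \sigma_j] = 1$ for $\abs{i-j} > 1$ and $\sigma_i \sigma_{i+1} \sigma_i = \sigma_{i+1} \sigma_i \sigma_{i+1}$ are satisfied by the images; then $c_v$ extends to a homomorphism by the universal property of Artin's presentation. To construct $c_v(\sigma_i) \in B_{nk}$, I view $B_{nk}$ geometrically as braids on $n$ small disks of $k$ strands each (a ``cable''). Then $c_v(\sigma_i)$ is the product of the pure cable crossing $X_i$---the element that swaps cables $i$ and $i+1$ without any internal twisting, obtained by blowing up each strand of $\sigma_i$ into $k$ parallel strands---with internal twists of $\phi^{a_i}$ in cable $i$, of $\phi^{t-a_i}$ in cable $i+1$, and of $\phi^c$ in each of the remaining $n-2$ cables.

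For the commutation relation $c_v(\sigma_i) c_v(\sigma_j) = c_v(\sigma_j) c_v(\sigma_i)$ with $\abs{i-j}>1$, the pure cable crossings $X_i$ and $X_j$ involve disjoint pairs of cables and hence commute in $B_{nk}$. The internal twists contributed by the two generators act on disjoint cables: cables not involved in either crossing simply receive $\phi^{2c}$ in either order, and the remaining cables each receive independent twist contributions that act on different cables, so all such factors commute as well.

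The main technical content is the braid relation $c_v(\sigma_i) c_v(\sigma_{i+1}) c_v(\sigma_i) = c_v(\sigma_{i+1}) c_v(\sigma_i) c_v(\sigma_{i+1})$. The assignment $\sigma_i \mapsto X_i$ alone extends to a homomorphism---the ``trivial'' cabling, obtained by replacing each strand with $k$ parallel strands---so the pure cable crossings already satisfy the braid relation. It remains to verify that both words impose the same total internal twist on each of the three cables $A, B, C$ initially at positions $i, i+1, i+2$. A direct bookkeeping, tracking at each step which cable sits in which position and which factor ($\phi^{a_j}$, $\phi^{t-a_j}$, or $\phi^c$) is applied, yields the total twists $\phi^{a_i + a_{i+1} + c}$ on $A$, $\phi^{t + c}$ on $B$, and $\phi^{c + 2t - a_i - a_{i+1}}$ on $C$ from both the left- and right-hand sides. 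With the two relations verified, $c_v$ is a well-defined homomorphism.

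The main obstacle is the bookkeeping in the braid relation: one must track internal twists attached to each cable (labeled by strand, not by position) and confirm that the prescribed contributions associated to each $\sigma_j$ are applied to the correct cable at each stage. An implicit, but crucial, point is that an internal twist of a given cable commutes with every pure cable crossing that does not involve that cable, so the order in which the internal-twist factors and the pure crossings are composed does not affect the accumulated internal twist per cable---only the total count matters, which is exactly what the bookkeeping verifies.
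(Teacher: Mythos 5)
Your proof is correct and takes essentially the same approach as the paper's (which merely asserts the bookkeeping as "straightforward to check" and records the same per-strand exponents $\phi^{a_i+a_{i+1}+c}$, $\phi^{t+c}$, $\phi^{2t+c-a_i-a_{i+1}}$, and $\phi^{3c}$). Your closing remark could be stated slightly more precisely---what one really needs is that conjugating an internal twist of a cable by a pure cable crossing that \emph{does} involve it yields an internal twist of the same cable at its new position, so that pushing all internal-twist factors to one end is well defined---but this is exactly the picture you invoke when you say twists are labeled by strand rather than position, so the argument stands.
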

\begin{proof}
To prove the lemma, it suffices to show that the cabled generators $\{c_v(\sigma_i)\}$ satisfy all braid and commutation relations. These are both straightforward to check. For instance, consider the two braids $c_v(\sigma_i \sigma_{i+1} \sigma_i)$ and $c_v(\sigma_{i+1} \sigma_i \sigma_{i+1})$. In both braids, the $i^{th}$ strand is replaced by the braid $\phi^{a_i + a_{i+1} + c}$, the $(i+1)^{st}$ is replaced by $\phi^{t + c}$, the $(i+2)^{nd}$ is replaced by $\phi^{2t +c - a_i - a_{i+1}}$, and all other strands are replaced by $\phi^{3c}$. The commutation relation $c_v(\sigma_i \sigma_j) = c_v(\sigma_j \sigma_i)$ for $\abs{i-j} >1$ is similarly easy to verify. 
\end{proof}

\subsection{Cabling spherical braids} The cabling construction described above makes implicit use of a consistent framing of a neighborhood of each strand. Such framings are trivial to construct when the ambient space is the disk, but the usual topological constraints obstruct such framings for spherical braids. On the group-theoretic level, this obstruction can be formulated in terms of the following diagram, whose rows are the short exact sequences describing the spherical braid groups as quotients of the braid groups of the disk:
\begin{equation}\label{equation:diagram}
\xymatrix{
1 \ar[r]	& \pair{\pair{R_n}} \ar[r]\ar@{.>}[d]	& B_n \ar[r] \ar[d]_-{c_v}	& B_n(S^2) \ar[r] \ar@{.>}[d]_{\bar{c_v}} 	& 1\\
1 \ar[r]	& \pair{\pair{R_{nk}}} \ar[r]		& B_{nk} \ar[r]			& B_{nk}(S^2) \ar[r]						&1.
}
\end{equation}
The diagram shows that the homomorphism $c_v: B_n \to B_{nk}$ will descend to a homomorphism $\bar{c_v}: B_n(S^2) \to B_{nk}(S^2)$ if and only if the braid $R_n = \sigma_1 \dots \sigma_{n-1}^2 \dots \sigma_1$ is sent to an element of $\pair{\pair{R_{nk}}}$ (the normal closure of the element $R_{nk}$). In geometric terms, this is equivalent to the requirement that $c_v(R_n)$ be isotopic to the identity as a spherical braid. 

In order to understand the constraints this imposes on the cabling vector, we must understand the isotopy between the spherical braids $R_n$ and $1$ once framings are taken into account. The content of Lemma \ref{lemma:framing} below is that the isotopy $R_n \sim 1$ introduces a double twist to the framing.

\begin{figure}
\labellist
\small
\endlabellist
\includegraphics{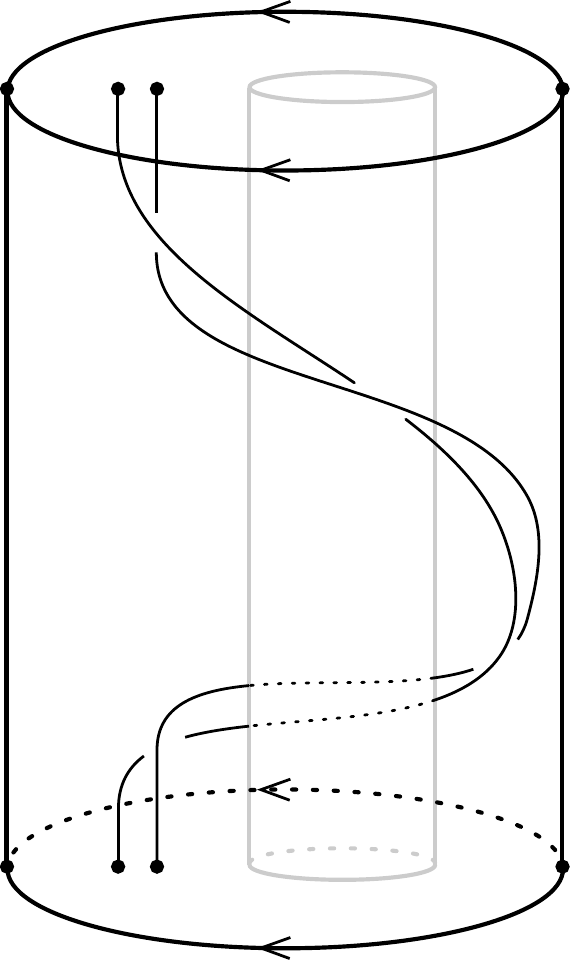}
\caption{The braid $R_n(k) \subset S^2 \times [0,1]$, depicted for $k = 2$. The remaining $(n-1)k$ strands are contained inside the grey cylinder and have constant $S^2$-coordinate. For general $k$, one can imagine the two depicted strands as determining the left and right edges of a flat strip on which the $k$ strands are arranged.}
\label{figure:framing}
\end{figure}

\begin{lemma}\label{lemma:framing}
For any $n \ge 3$ and $k \ge 2$, the braid $R_n(k)$ depicted in Figure \ref{figure:framing} is trivial as an element of $B_{nk}(S^2)$.
\end{lemma}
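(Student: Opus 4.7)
The strategy is to exhibit an explicit isotopy in $S^2 \times [0,1]$ realizing the triviality of $R_n(k)$, generalizing the familiar isotopy that makes $R_n = 1$ in $B_n(S^2)$. The setup: $R_n(k)$ has $(n-1)k$ stationary strands confined to a small disk $D \subset S^2$, while the remaining $k$ strands form a flat ribbon $\Sigma$ whose center-line traces a loop encircling $D$ once.

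The first step would be to revisit why $R_n = 1$ in $B_n(S^2)$: the loop traced by the moving strand bounds the complementary disk $S^2 \setminus D$, so it may be contracted by an ambient isotopy of $S^2$ (fixing $D$ setwise) that sweeps the loop over the back of the sphere. I would then apply this same ambient isotopy to the ribbon $\Sigma$, tracking what happens to its transverse framing. Because the tangent bundle of $S^2$ has Euler number $2$, the framing of the ribbon undergoes one full rotation during the contraction across $S^2 \setminus D$. In the braid picture, this full rotation inserts a full twist $\Delta_k^2$ among the $k$ strands of the ribbon --- this is the ``double twist in the framing'' named in the preamble.

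The content of the lemma is then that Figure \ref{figure:framing} depicts $R_n(k)$ as a flat cabling that implicitly includes a compensating $\Delta_k^{-2}$, so that after applying the sphere-sweeping isotopy, the introduced $\Delta_k^2$ exactly cancels the built-in compensation and the braid becomes trivial. The main obstacle I anticipate is the precise local bookkeeping of this framing compensation --- verifying from the picture that the built-in twist has the correct sign and magnitude to cancel the geometric $\Delta_k^2$. One could alternatively argue purely algebraically, by writing $R_n(k)$ as a word in the standard generators of $B_{nk}$ and showing that it lies in the normal closure of $R_{nk}$; combined with the spherical relation $R_{nk} = 1$, this would give $R_n(k) = 1$ in $B_{nk}(S^2)$ via the standard presentation of the spherical braid group.
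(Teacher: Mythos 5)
Your overall strategy --- contract the loop traced by the ribbon over the back of the sphere and track how the framing transforms --- is the same idea the paper's proof uses for its first (pictorial) route via Figure \ref{figure:isotopy}. However, there is a concrete error in the framing bookkeeping that you yourself flag as the main risk. The Euler number $2$ of $TS^2$ forces the ribbon's framing to pick up \emph{two} full $2\pi$ rotations when swept across $S^2 \setminus D$, not one; equivalently, the cable of $k$ strands acquires $T_d^{\pm 2}$ (two Dehn twists about the cable's boundary $d$, i.e.\ $\Delta_k^{\pm 4}$), not a single full twist $\Delta_k^{\pm 2} = T_d^{\pm 1}$ as you assert. You also conflate ``one full twist $\Delta_k^2$'' with the paper's phrase ``double twist'': in the paper the compensating twist built into $R_n(k)$ is $T_d^2$, which is $\Delta_k^4$. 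With your count the cancellation would fail and $R_n(k)$ would come out to $T_d^{\pm 1} \neq 1$, so the argument as stated does not close.

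The paper's second argument, via the mapping class group, is the clean way to nail this down and is worth internalizing. One writes $\overline{R_n(k)} = DP(D_1,\gamma)\,T_d^2$, where $DP(D_1,\gamma)$ is the disk-pushing map and $T_d^2$ is the twist visible in Figure \ref{figure:framing}; the standard point/disk-pushing formula gives $DP(D_1,\gamma) = T_b T_\gamma^{-1} T_d^{-1}$, and in $\Mod_{nk}(S^2)$ one has $T_b = 1$ and $T_\gamma = T_d$ (since $\gamma$ and $d$ cobound on the sphere), so $DP(D_1,\gamma) = T_d^{-2}$ and everything cancels. This makes the ``two Dehn twists'' statement precise and sign-unambiguous, which your Euler-class heuristic does not. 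Your alternative suggestion --- expand $R_n(k)$ as a word in the $\sigma_i \in B_{nk}$ and verify membership in the normal closure of $R_{nk}$ --- is logically valid via the presentation of $B_{nk}(S^2)$, but it is not carried out and would be quite cumbersome compared to the mapping class group computation.
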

\begin{proof}
This can be directly verified by applying the isotopy shown in Figure \ref{figure:isotopy}. Alternatively, this can be understood in the setting of the mapping class group. Let $b$ denote the outer boundary component of the disk. Let $D_1$ be the disk around the first $k$ points, with boundary component $d$. Let $\gamma$ be the curve surrounding all points outside of $D_1$. As a mapping class, 
\[
\overline{R_n(k)}=DP(D_1, \gamma)T_{d}^{2},
\]
where $DP(D_1, \gamma)$ denotes the ``disk pushing map'' of the disk $D_1$ around curve $\gamma$ and $T_d$ denotes the Dehn twist about $d$. As in \cite[Page 119]{FM}, we can see that $DP(D_1, \gamma)=T_{b}T_{\gamma}^{-1}T_d^{-1}$. Therefore, $\overline{R_n(k)}=T_{b}T_{\gamma}^{-1}T_d^{-1}T_d^2=1\in \Mod_{nk}(S^2)$, since $T_b=1$ and $T_{\gamma}=T_d$ in $\Mod_{nk}(S^2)$.
\end{proof}

\begin{figure}
\labellist
\small
\endlabellist
\includegraphics[scale=0.8]{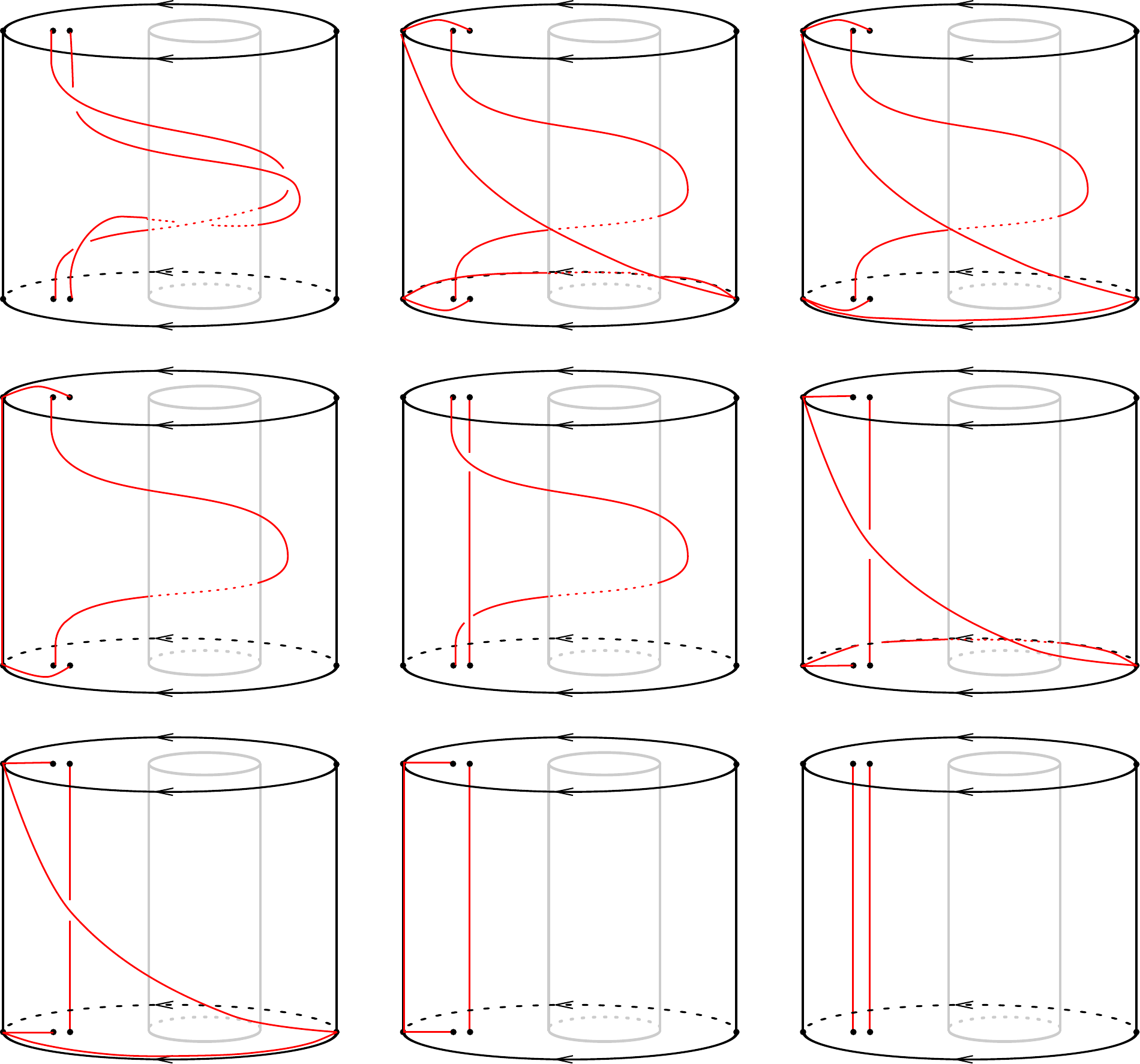}
\caption{The isotopy of Lemma \ref{lemma:framing}. The sequence should be read lexicographically. Between steps 2 and 3, a portion of the braid is pulled from the back to the front using the edge identification; the same move occurs between steps 6 and 7.}
\label{figure:isotopy}
\end{figure}

The braid $R_n(k)$ is a $k$-stranded cabling of $R_n$. In order for a given cabling homomorphism $c_v: B_n \to B_{nk}$ to descend to $\bar{c_v}: B_n(S^2) \to B_{nk}(S^2)$, it therefore suffices to produce a cabling vector $v = (\phi, a_1, \dots, a_{n-1}, c,t)$ for which $c_v(R_n) = R_n(k)$. Lemma \ref{lemma:existence} produces a class of suitable such $v$.

\begin{lemma}\label{lemma:existence}
Let $k = k'(n-1)(n-2) + 1$, and let $\phi \in B_{k-1,1}$ be the element $\phi = (\sigma_1 \dots \sigma_{k-2}\sigma_{k-1}^2)^{k'}$. For $a_1, \dots, a_{n-1}$ arbitrary, $c = -1$, and $t = 2n-4$, the twist vector $v = (\phi, a_1, \dots, a_{n-1}, c,t)$ satisfies 
\[
c_v(R_n) = R_n(k).
\]
\end{lemma}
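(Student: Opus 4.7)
The plan is a direct computation: apply $c_v$ to the word $R_n = \sigma_1 \cdots \sigma_{n-1} \sigma_{n-1} \cdots \sigma_1$ and identify the resulting element of $B_{nk}$ with $R_n(k)$. Since $R_n$ has trivial underlying permutation, each of the $n$ cables returns to its starting position under $c_v(R_n)$. Because all powers of $\phi$ commute and a $\phi$-twist on cable $j$'s $k$ strands commutes with every braid operation involving only other cables' strands, every $\phi$-contribution can be slid along its host cable's trajectory and collected at the end. This yields a factorization
\[
c_v(R_n) \;=\; c_0(R_n) \cdot \prod_{j=1}^n \phi^{e_j}_j,
\]
where $c_0$ is the ``bare'' cabling ($\phi = 1$, $a_i = c = t = 0$) so that $c_0(R_n)\in B_{nk}$ is the macroscopic braid on the cables, and $\phi^{e_j}_j$ is $\phi^{e_j}$ inserted on the $k$ strands of cable $j$ at its (initial $=$ final) position.

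To compute $e_j$, trace each cable through $R_n$. Cable $1$ occupies position $i$ when $\sigma_i$ is applied in the first half (collecting $\phi^{a_i}$) and position $i+1$ when $\sigma_i$ is applied in the second half (collecting $\phi^{t - a_i}$); the $a_i$ terms cancel, giving $e_1 = (n-1)t = 2(n-1)(n-2)$. For $2 \le j \le n$, cable $j$ is touched nontrivially only by the two occurrences of $\sigma_{j-1}$ (at position $j$ in the first half and at position $j-1$ in the second), together contributing $\phi^{t - a_{j-1}} \cdot \phi^{a_{j-1}} = \phi^t$; the remaining $2(n-2)$ generators involve positions disjoint from cable $j$'s current location, each contributing $\phi^c$. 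Thus $e_j = t + 2(n-2)c = (2n-4) + 2(n-2)(-1) = 0$.

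It remains to identify the single nontrivial factor $\phi^{e_1}_1$. Since $k-1 = k'(n-1)(n-2)$, we have $\phi^{e_1} = \alpha_1^{2(k-1)}$, and the identity $\alpha_1^{k-1} = \Delta_k^2$ in $B_k$ (the full-twist generator of the center, recorded in passing in the proof of Lemma \ref{lemma:2torsion}) gives $\phi^{e_1} = \Delta_k^4$. This is precisely the double Dehn twist $T_d^2$ about the boundary of the disk $D_1$ enclosing cable $1$. Combined with $c_0(R_n)$, which realizes the disk-pushing braid $DP(D_1, \gamma)$ appearing in the mapping-class computation of Lemma \ref{lemma:framing}, this yields $c_v(R_n) = DP(D_1, \gamma) \cdot T_d^2 = R_n(k)$, as required. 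The main obstacle is the cable-by-cable bookkeeping, particularly for the intermediate cables $2 \le j \le n-1$; once the factorization is in hand, the identification with $R_n(k)$ reduces to the identity $\alpha_1^{k-1} = \Delta_k^2$ and a visual comparison with the decomposition $R_n(k) = DP(D_1,\gamma) T_d^2$ from Lemma \ref{lemma:framing}.
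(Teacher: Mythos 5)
Your proof is correct and follows essentially the same cable-by-cable computation as the paper's: the exponents $e_1 = (n-1)t = 2(n-1)(n-2)$ and $e_j = t + 2(n-2)c = 0$ for $j \ge 2$ match the paper's strand-tracking exactly. The only difference is presentational: you package the final identification algebraically by writing $c_0(R_n) = DP(D_1,\gamma)$ and $\phi^{e_1} = \alpha_1^{2(k-1)} = \Delta_k^4 = T_d^2$ and comparing with the decomposition from Lemma \ref{lemma:framing}, whereas the paper identifies $\phi^{2(n-1)(n-2)}$ directly with the double full twist on cable $1$ of $R_n(k)$ by a geometric observation.
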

\begin{proof}
We must analyze the internal braiding on each strand of the cabled braid 
\[
c_v(R_n) = c_v(\sigma_1 \dots \sigma_{n-1}^2 \dots \sigma_1).
\]
In order to determine $c_v(R_n)$, we track one strand at a time, applying the cabling construction (Lemma \ref{lemma:cable}) one letter at a time.

We begin with the first strand. Before applying the $i^{th}$ letter of the subword $\sigma_1 \dots \sigma_{n-1}$, the first strand is in position $i$, and so is cabled with $\phi^{a_i}$. Before applying the $i^{th}$ letter of the other subword $\sigma_{n-1} \dots \sigma_1$, the first strand is in position $n+1 - i$, and so is cabled with $\phi^{t-a_i}$. 

Altogether then, the first strand is cabled with $\phi^{(n-1)t} = \phi^{2(n-1)(n-2)}$. Geometrically, $\phi$ is represented as a rotation by an angle $2\pi/(n-1)(n-2)$, with one strand fixed at the center of the disk and the remaining strands arranged at equally-spaced points along a circle. Thus, $\phi^{2(n-1)(n-2)}$ is the braid given by two full twists of the strands about the central axis, which is exactly the cabling of the first strand in the braid $R_n(k)$. 

The cabling on the remaining strands can be determined in a similar fashion. Fix $j \ge 2$. The subword $\sigma_1 \dots \sigma_{j-2}$ leaves the $j^{th}$ strand in position $j$, and so gives a total cabling of $\phi^{2-j}$. Then $\sigma_{j-1}$ moves the $j^{th}$ strand to position $j-1$ and appends $\phi^{t-a_{j-1}}$ to the cabling. The subword $\sigma_{j} \dots \sigma_{n-1}^2\dots \sigma_{j}$ leaves the $j^{th}$ strand in position $j-1$, appending $\phi^{2j-2n}$. Then $\sigma_{j-1}$ moves the $j^{th}$ strand to position $j$, appending $\phi^{a_{j-1}}$ to the cabling. Finally the remaining subword $\sigma_{j-2} \dots \sigma_1$ appends an additional $\phi^{2-j}$. The total cabling induced by this procedure is $\phi^{t+4 -2n} = \id$, since $t = 2n-4$. Thus, $c_v(R_n) = R_n(k)$ as claimed.
\end{proof}

The last preliminary point to address before proving Theorem \ref{theorem:cabling} concerns the assertion mentioned in the Introduction that a section $s: B_n(S^2) \to B_{n,m}(S^2)$ can be promoted to a section $S: \Conf_n(S^2) \to \Conf_{n,m}(S^2)$. This is a standard argument in obstruction theory; see \cite[Proposition 4]{GG} for a written account.

\begin{proposition}[Gon\c calves-Guaschi]\label{proposition:promotion}
For $n \ge 3$, the fiber bundle $\Conf_{n,m}(S^2) \to \Conf_n(S^2)$ admits a section if and only if there is a group-theoretic section $s: B_n(S^2) \to B_{n,m}(S^2)$. 
\end{proposition}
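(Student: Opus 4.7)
The ``only if'' direction is automatic: a bundle section $S$ induces a group-theoretic section $s := S_*$ via the functor $\pi_1$. The content of the proposition lies in the converse, which I would establish by a standard skeletal obstruction-theoretic argument.

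The plan is as follows. First I would identify the fiber of $P$ over a configuration $\{x_1,\dots,x_n\}$ as the space $\Conf_m(S^2 \setminus \{x_1,\dots,x_n\})$. For $n \ge 3$, the punctured sphere $\Sigma := S^2 \setminus \{x_1,\dots,x_n\}$ has non-abelian free fundamental group, and in particular is aspherical. Iterating the Fadell--Neuwirth fibration propagates this asphericity: $\PConf_m(\Sigma)$ is a $K(\pi,1)$, and hence so is its finite quotient $\Conf_m(\Sigma)$. Call this fiber $F$, and set $E := \Conf_{n,m}(S^2)$ and $B := \Conf_n(S^2)$.

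Next I would fix a CW structure on $B$ with a single $0$-cell serving as basepoint, one $1$-cell per generator of $\pi_1(B) = B_n(S^2)$, and one $2$-cell per defining relator (together with higher cells built on top). With the algebraic section $s$ in hand, I construct a bundle section $S$ skeleton-by-skeleton. On the $0$-skeleton I lift the basepoint to an arbitrary point in the fiber. On each $1$-cell attached along a generator $\gamma$, I lift it to a loop in $E$ representing $s(\gamma)$. The crucial step is extending over a $2$-cell attached along a relator $R$ of $\pi_1(B)$: the boundary loop maps to $s(R) = 1 \in \pi_1(E)$ because $s$ is a homomorphism, so the attaching circle bounds a disk in $E$, and the lift extends. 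For $k \ge 3$, the successive obstructions to extending over $k$-cells live in cohomology with coefficients in $\pi_{k-1}(F)$, which vanishes by asphericity of $F$.

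I expect the main subtle point to be that $\pi_1(F)$ is non-abelian, so a fully rigorous treatment of the $2$-cell obstruction must be phrased in terms of non-abelian cohomology rather than an ordinary $H^2(B; \pi_1(F))$. However, for the concrete skeletal construction all that is needed is the pointwise statement that $s$ sends each relator to the identity, which is built into the definition of a homomorphism; the asphericity of $F$ then handles everything above dimension two. This is precisely the argument given by Gon\c{c}alves--Guaschi in \cite[Proposition 4]{GG}, to which the reader may be referred for a detailed write-up.
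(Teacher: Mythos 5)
Your overall strategy (skeletal obstruction theory with aspherical fiber) matches the argument the paper attributes to Gon\c calves--Guaschi, and the $0$-cell, $1$-cell, and $\ge 3$-cell steps are handled correctly. However, the $2$-cell step has a genuine gap. You write that since the boundary loop has image $s(R)=1\in\pi_1(E)$, ``the attaching circle bounds a disk in $E$, and the lift extends.'' But null-homotopy of the loop in $E$ is strictly weaker than vanishing of the obstruction: the obstruction to extending a partial section over a $2$-cell is a class in $\pi_1(F)$, and what you have shown is only that its image under the fiber-inclusion $\iota_*\colon\pi_1(F)\to\pi_1(E)$ is trivial. From the long exact sequence of the fibration, $\ker\iota_*=\operatorname{im}\bigl(\partial\colon\pi_2(B)\to\pi_1(F)\bigr)$, which need not vanish. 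Taken at face value, your reasoning would ``prove'' that the Hopf fibration $S^1\to S^3\to S^2$ admits a section: the fiber $S^1$ is aspherical, the trivial splitting of $\pi_1(S^3)\to\pi_1(S^2)$ sends the relator to $1$, yet the Euler class obstructs any section.

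What rescues the argument --- and is precisely where the hypothesis $n\ge 3$ enters --- is that $\pi_2(\Conf_n(S^2))=0$ for $n\ge 3$. This follows from the diffeomorphism $\PConf_3(S^2)\cong\PSL_2(\C)\simeq\SO(3)$, whose $\pi_2$ vanishes, together with the Fadell--Neuwirth fibration $\PConf_{n-3}(S^2\setminus\{3\text{ pts}\})\to\PConf_n(S^2)\to\PConf_3(S^2)$ with aspherical fiber; passing to the finite cover $\PConf_n(S^2)\to\Conf_n(S^2)$ preserves $\pi_2$. With this, $\iota_*$ is injective, the $2$-cell obstructions vanish, and the rest of your construction goes through. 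Two remarks: for $n=1,2$ one has $\pi_2(\Conf_n(S^2))\cong\Z$ and the argument would fail, a useful check that the $\pi_2$ input is where $n\ge 3$ is really used (asphericity of the fiber alone holds already for $n\ge 1$); and the non-abelian-cohomology subtlety you flagged is largely a red herring for the existence question, since the cell-by-cell construction only needs each individual obstruction element to vanish, not a globally well-defined cohomology class.
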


Theorem \ref{theorem:cabling} now follows from the preceding analysis.

\begin{proof} {\em (of Theorem \ref{theorem:cabling})}
Suppose $m$ is divisible by $n(n-1)(n-2)$, and set $k = m/n + 1$. Let $v$ be a cabling vector satisfying the hypotheses of Lemma \ref{lemma:existence}. Applying Lemma \ref{lemma:existence} and appealing to (the analysis following) diagram \eqref{equation:diagram}, it follows that $c_v$ descends to a homomorphism 
\[
\bar{c_v}: B_n(S^2) \to B_{nk}(S^2).
\]
Note that $\phi = (\sigma_1 \dots \sigma_{k-2} \sigma_{k-1}^2)^{k'}$ is an element of $B_{k-1,1}$. By Remark \ref{remark:decable}, $\bar{c_v}$ is valued in the subgroup $B_{n, n(k-1)}(S^2) = B_{n,m}(S^2)$ and so provides a section of $p: B_{n,m}(S^2) \to B_n(S^2)$ as desired. By Proposition \ref{proposition:promotion}, $\bar{c_v}$ can be promoted to a section $C_v: \Conf_n(S^2) \to \Conf_{n,m}(S^2)$. 
\end{proof}

\begin{remark}\label{remark:spacelevel}
There is also an explicit construction of a section $S: \Conf_n(S^2) \to \Conf_{n,m}(S^2)$ that induces the cabling map $s: B_n(S^2) \to B_{n,m}(S^2)$. To construct this, given an ordered triple $(z_1, z_2, z_3)$ of distinct points in $\CP^1$, we let $M_{z_1, z_2, z_3}$ be the unique M\"obius transformation taking $(z_1, z_2, z_3) \to (0,1, \infty)$. This determines a map
\[
M: \Conf_{n-3,1,1,1}(\CP^1)\times \CP^1 \to \CP^1,
\]
where $\Conf_{n-3,1,1,1}(\CP^1)$ is the cover of $\Conf_n(\CP^1)$ consisting of configurations with three ordered distinguished points. In the fiber over $(\{z_4, \dots, z_n\}, z_1, z_2, z_3)$, the map $M$ is given by $M_{z_1,z_2,z_3}$. 

For a fixed $(\{z_4, \dots, z_n\}, z_1, z_2, z_3) \in \Conf_{n-3,1,1,1}(\CP^1)$, the product
\[
R_{z_3} := \prod_{z_i \ne z_j,\ i,j \ne 3} M_{z_1,z_2,z_3}
\]
determines a rational map of degree $(n-1)(n-2)$ with a pole at $z_3$ of order $(n-1)(n-2)$ and zeroes at all $z_j, j \ne 3$ of order $(n-2)$. It is clear from the construction that $R_{z_3}$ is in fact well-defined given only a point $(\{z_j, j \ne 3\}, z_3) \in \Conf_{n-1,1}(\CP^1)$. As before, this determines a map
\[
R: \Conf_{n-1,1}(\CP^1) \times \CP^1 \to \CP^1.
\]
Let $v_0$ be a fixed normal vector near $\infty\in \mathbb{C}P^2$. 

It is not hard to construct a continuous function $\epsilon: \Conf_{n}(\CP^1) \to \C$ with the property that the collection of $n(n-1)(n-2)$ points
\begin{equation}\label{equation:newpoints}
\bigcup_{i = 1}^n R_{z_i}^{-1}(\epsilon(\{z_1,...,z_n\}))
\end{equation}
is distinct and disjoint from the set $\{z_1, \dots, z_n\}$ ($\epsilon$ should be thought of as being very very large: then $R_{z_i}^{-1}(\epsilon(\{z_1,...,z_n\}))$ is a collection of $(n-1)(n-2)$ points very near $z_i$). Moreover the set \eqref{equation:newpoints} is well-defined given only the point $\{z_1, \dots, z_n\} \in \Conf_n(\CP^1)$. Taking $k$ such functions $\epsilon_1, \dots, \epsilon_k$ with pairwise-disjoint images (we can arrange them on different lines towards $\infty$), one can construct continuous sections $s: \Conf_n(\CP^1) \to \Conf_{n,m}(\CP^1)$ for any $m$ divisible by $n(n-1)(n-2)$. 
\end{remark}

\section{Three and four points}\label{section:34}
In this section, we give some algebro-geometric constructions of sections of the bundles $\Conf_{3,m}(\CP^1) \to \Conf_3(\CP^1)$ and $\Conf_{4,m}(\CP^1) \to \Conf_4(\CP^1)$. These results are summarized in Theorem \ref{theorem:34}. 

\subsection{Three points} The space $\Conf_3(S^2)$ is of course very special. Under the identification $S^2 = \CP^1$, the group $\Aut(\CP^1) = \PGL_2(\C)$ acts triply-transitively on $\CP^1$, i.e. transitively on $\Conf_3(\CP^1)$. Thus one method for constructing sections is to first apply an automorphism to normalize the configuration to the set $\{0,1,\infty\}$, and then find all possible configurations of points distinct from $\{0,1,\infty\}$ and invariant under the stabilizer of $\{0,1,\infty\}$ in $\Aut(\CP^1)$. We will flesh out this approach by means of the {\em cross-ratio}. Recall from Remark \ref{remark:spacelevel} the M\"obius transformation $M_{z_1,z_2,z_3}$ characterized by sending the triple $(z_1,z_2,z_3)$ to $(0,1,\infty)$.

\para{The cross-ratio} Let $z_1, z_2, z_3, z_4 \in \CP^1$ be four ordered points. The {\em cross-ratio} is the expression
\[
[z_1,z_2; z_3, z_4] := M_{z_1,z_2,z_3}(z_4) = \frac{(z_2-z_3)(z_4-z_1)}{(z_2-z_1)(z_4-z_3)}.
\]
To see how the cross-ratio can be exploited to construct sections of $\Conf_{3,n}(\CP^1) \to \Conf_3(\CP^1)$, it is necessary to understand how the value of $[z_1,z_2;z_3,z_4]$ changes under a permutation $\sigma \in S_4$. 

\begin{lemma}\label{lemma:perm}
Let $z_1, z_2,z_3,z_4 \in \CP^1$ be given, and suppose that $[z_1,z_2;z_3,z_4] = \lambda$. Let $\sigma \in S_4$ be an arbitrary permutation. Then
\[
[z_{\sigma(1)}, z_{\sigma(2)}; z_{\sigma(3)}, z_{\sigma(4)}] \in \left\{\lambda, \frac{1}{\lambda}, 1-\lambda, \frac{1}{1-\lambda}, \frac{\lambda - 1}{\lambda}, \frac{\lambda}{\lambda-1} \right\}.
\]
Thus the cross-ratio determines a generically 6-valued function $[\{z_1, z_2, z_3, z_4\}]$ of unordered $4$-tuples.
\end{lemma}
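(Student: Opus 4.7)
The plan is to exploit the classical observation that the $S_4$-action on the cross-ratio factors through the quotient $S_4/V_4 \cong S_3$, where $V_4 = \{e, (12)(34), (13)(24), (14)(23)\}$ is the Klein four-subgroup, and then to identify the induced $S_3$-action with the standard group of six M\"obius transformations in $\lambda$.

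First, working directly from the defining expression
\[
[z_1,z_2;z_3,z_4] = \frac{(z_2-z_3)(z_4-z_1)}{(z_2-z_1)(z_4-z_3)} = \lambda,
\]
I would verify by inspection of the numerator and denominator that swapping $z_1 \leftrightarrow z_2$ sends $\lambda$ to $1/\lambda$, as does swapping $z_3 \leftrightarrow z_4$. Hence the double transposition $(12)(34)$ acts trivially. A similar short calculation shows that $(23)$ sends $\lambda$ to $1-\lambda$. To conclude that the entire Klein four-group acts trivially, it suffices to check one more double transposition, say $(13)(24)$, by a direct substitution; the remaining double transposition then fixes $\lambda$ automatically since the three nontrivial elements of $V_4$ satisfy the relation that any two of them compose to the third. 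Since $V_4$ is normal in $S_4$ with quotient $S_3$, the cross-ratio action factors through $S_3$.

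Next, the quotient $S_3$ is generated by the images of the two transpositions $(12)$ and $(23)$, which act on $\lambda$ as the involutions $\lambda \mapsto 1/\lambda$ and $\lambda \mapsto 1-\lambda$ respectively. Composing these two involutions in the two possible orders produces
\[
\lambda \mapsto \frac{1}{1-\lambda} \quad \text{and} \quad \lambda \mapsto \frac{\lambda-1}{\lambda},
\]
and one further composition yields $\lambda \mapsto \lambda/(\lambda-1)$. These six M\"obius transformations form a subgroup of $\Aut(\CP^1)$ isomorphic to $S_3$, and the orbit of $\lambda$ under this subgroup is precisely the six-element set appearing in the statement.

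Finally, the assertion that the cross-ratio is a \emph{generically} $6$-valued function on unordered $4$-tuples follows because the six rational expressions in $\lambda$ are pairwise distinct outside a finite subset of $\CP^1$, namely the locus where $\lambda \in \{-1,1/2,2\}$ or where $\lambda$ is a primitive sixth root of unity (the so-called harmonic and equianharmonic cross-ratios). The entire argument is a sequence of algebraic verifications, so there is no real obstacle; the only place requiring mild care is the initial bookkeeping of signs when determining how single transpositions permute $\lambda$.
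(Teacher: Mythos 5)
The paper states Lemma \ref{lemma:perm} without proof, treating it as a classical fact, so there is no internal proof to compare against. Your structural approach---showing the $S_4$-action on the cross-ratio factors through the Klein four-group $V_4$, identifying the quotient $S_3$-action with the anharmonic group of six M\"obius transformations, and computing the orbit---is exactly the standard argument, and it is the right one. However, your specific transposition-to-M\"obius assignments do not match the paper's convention $[z_1,z_2;z_3,z_4] = \frac{(z_2-z_3)(z_4-z_1)}{(z_2-z_1)(z_4-z_3)}$. If you normalize $(z_1,z_2,z_3,z_4) = (0,1,\infty,\lambda)$, then $(12)$ gives $M_{1,0,\infty}(\lambda) = 1-\lambda$ (not $1/\lambda$), $(13)$ gives $M_{\infty,1,0}(\lambda) = 1/\lambda$, and $(23)$ gives $M_{0,\infty,1}(\lambda) = \lambda/(\lambda-1)$ (not $1-\lambda$). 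You appear to be using the other common convention $(z_1,z_2;z_3,z_4) = \frac{(z_1-z_3)(z_2-z_4)}{(z_1-z_4)(z_2-z_3)}$, for which $(12)\mapsto 1/\lambda$ is correct. The ``verify by inspection'' step you describe would actually fail as written, since swapping $z_1\leftrightarrow z_2$ in the paper's numerator and denominator does not yield the reciprocal. This does not affect the conclusion---the corrected generators $\lambda\mapsto 1-\lambda$ and $\lambda\mapsto 1/\lambda$ still generate the same order-six group, $(12)(34)$ still acts trivially, and the orbit is the same six-element set---but the intermediate claims should be recomputed against the definition actually in use.
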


\begin{remark}\label{remark:stabilizer}
It is easy to see that the stabilizer of $\{0,1,\infty\}$ in $\PGL_2(\C)$ is the dihedral group $D_3$. The six values $\left\{\lambda, \frac{1}{\lambda}, 1-\lambda, \frac{1}{1-\lambda}, \frac{\lambda - 1}{\lambda}, \frac{\lambda}{\lambda-1} \right\}$ in fact comprise the orbit of $\lambda$ under $D_3$.
\end{remark}

Lemma \ref{lemma:perm} also allows us to view the cross-ratio as a multi-valued function on $\Conf_3(\CP^1)$. Given  $\{z_1, z_2, z_3\} \in \Conf_3(\CP^1)$ and $\lambda \in \CP^1$, define
\[
\times(z_1,z_2, z_3, \lambda) = \{z_4 \in \CP^1 \mid \lambda \in [\{z_1, z_2, z_3, z_4\}] \}.
\]
For generic values of $\lambda$, the function $\times$ is $6$-valued. However, $\times$ is $3$-valued for $\lambda \in \{-1,\frac{1}{2}, 2\}$, and is $2$-valued for $\lambda = \zeta^{\pm 1}$ either of the primitive sixth roots of unity. Moreover, 
\[
\times(z_1, z_2, z_3, \lambda) \cap\times(z_1, z_2, z_3, \lambda') = \emptyset
\]
whenever $\lambda$ and $\lambda'$ lie in different orbits of $D_3$, and 
\[
\times(z_1, z_2, z_3, \lambda) \cap \{z_1, z_2, z_3\} = \emptyset
\] as long as $\lambda \ne 0, 1, \infty$. 

\begin{proposition}\label{proposition:3pts}
For any $m \ge 0$ satisfying $m \equiv 0,2 \pmod 3$, there exists an algebraic section $\sigma$ of the bundle $\Conf_{3,m}(\CP^1) \to \Conf_3(\CP^1)$. Moreover, $\sigma$ is conformally invariant, i.e. equivariant with respect to the action of $\PGL_2(\C)$. 
\end{proposition}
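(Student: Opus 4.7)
The plan is to exploit the multi-valued function $\times(z_1,z_2,z_3,\lambda)$ defined in the paragraph following Remark \ref{remark:stabilizer}. The observation is that for any fixed $\lambda \in \CP^1 \setminus \{0,1,\infty\}$, the set $\times(z_1,z_2,z_3,\lambda)$ is an unordered subset of $\CP^1 \setminus \{z_1,z_2,z_3\}$ that depends only on the unordered triple $\{z_1,z_2,z_3\}$ (by Lemma \ref{lemma:perm}), has cardinality $6$, $3$, or $2$ depending on whether $\lambda$ is generic, lies in the $D_3$-orbit $\{-1,\tfrac12,2\}$, or lies in the orbit $\{\zeta,\zeta^{-1}\}$ of primitive sixth roots of unity, and is cut out algebraically as the vanishing locus of the polynomial $\prod_{\mu \in D_3 \cdot \lambda}\bigl((z_2-z_3)(w-z_1) - \mu(z_2-z_1)(w-z_3)\bigr)$ in the variable $w$. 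Moreover it is manifestly $\PGL_2(\C)$-equivariant since the cross-ratio is a $\PGL_2(\C)$-invariant.

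The second observation is a disjointness statement already recorded after Remark \ref{remark:stabilizer}: if $\lambda$ and $\lambda'$ lie in distinct $D_3$-orbits, then $\times(z_1,z_2,z_3,\lambda) \cap \times(z_1,z_2,z_3,\lambda') = \emptyset$, and each of these sets is disjoint from $\{z_1,z_2,z_3\}$. Combining $k$ distinct generic orbits with possibly the $\{-1,\tfrac12,2\}$ orbit and/or the $\{\zeta,\zeta^{-1}\}$ orbit, one obtains, for any choice of such data, an algebraic, conformally invariant assignment of a set of $6k + 3\varepsilon_1 + 2\varepsilon_2$ distinct new points, with $\varepsilon_i \in \{0,1\}$.

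Thus I will argue as follows. For each residue class $m \pmod 6$ realizable as $6k + 3\varepsilon_1 + 2\varepsilon_2$ with $\varepsilon_i \in \{0,1\}$ and $k \ge 0$ -- these are exactly $m \in \{0,2,3,5,6,8,9,\ldots\}$, i.e.\ $m \not\equiv 1 \pmod 3$, equivalently $m \equiv 0,2 \pmod 3$ -- choose $k$ pairwise distinct $D_3$-orbits of generic values $\lambda_1,\ldots,\lambda_k$ in $\CP^1$ (the orbit space is uncountable, so this is trivially possible), together with the orbit of $-1$ if $\varepsilon_1 = 1$ and the orbit of $\zeta$ if $\varepsilon_2 = 1$. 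The section is then
\[
\sigma(\{z_1,z_2,z_3\}) = \bigcup_{i=1}^k \times(z_1,z_2,z_3,\lambda_i) \;\cup\; \varepsilon_1 \cdot \times(z_1,z_2,z_3,-1) \;\cup\; \varepsilon_2 \cdot \times(z_1,z_2,z_3,\zeta),
\]
where the $\varepsilon_i$ factors mean: include this term only when $\varepsilon_i = 1$. By the two observations above this is a well-defined continuous (in fact algebraic, and $\PGL_2(\C)$-equivariant) section of $\Conf_{3,m}(\CP^1) \to \Conf_3(\CP^1)$.

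There is no serious obstacle here: the only thing that could conceivably go wrong is failure of disjointness between the various orbit-sets of new points and the original three points, but the disjointness statements in the paragraph following Remark \ref{remark:stabilizer} handle precisely this and are exactly why the $\lambda_i$ are chosen to lie in distinct $D_3$-orbits and to avoid $\{0,1,\infty\}$. The mild bookkeeping point is only that the residues $0,2,3,5 \pmod 6$ cover exactly $m \equiv 0,2 \pmod 3$, which is immediate from $6k+3\varepsilon_1+2\varepsilon_2$ enumerating $\{0,2,3,5\} \pmod 6$ as $(\varepsilon_1,\varepsilon_2)$ ranges over $\{0,1\}^2$.
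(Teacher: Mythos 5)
Your proof is correct and takes essentially the same approach as the paper: your decomposition $m = 6k + 3\varepsilon_1 + 2\varepsilon_2$ is exactly the paper's $m = 2a + 3b + 6c$ with $a, b \in \{0,1\}$ (renaming $a = \varepsilon_2$, $b = \varepsilon_1$, $c = k$), and the section is built identically as a union of the multi-valued cross-ratio preimages over distinct $D_3$-orbits including $\zeta$ and $-1$ as needed. The only difference is that you spell out the algebraicity and $\PGL_2(\C)$-equivariance explicitly where the paper just asserts that the assignment ``has the required properties.''
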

\begin{proof}
There is a unique expression for $m$ of the form
\[
m = 2 a + 3 b + 6 c
\]
with $a, b \in \{0,1\}$. Set $k = a + b + c$. Choose a set $\{\lambda_1, \dots, \lambda_k\} \subset \CP^1 \setminus \{0,1,\infty\}$; these points should lie in distinct orbits under the action of the stabilizer of $\{0,1,\infty\}$. If $a = 1$ then set $\lambda_1 = \zeta$; likewise, if $b = 1$ then set $\lambda_2 = -1$. Then the assignment
\[
\sigma(\{z_1, z_2, z_3\}) = \bigcup_{i = 1}^k \times(z_1,z_2, z_3, \lambda_i)
\]
has the required properties. 
\end{proof}

\subsection{Four points}\label{section:4} We now turn to the problem of constructing sections of the bundle $\Conf_{4,m}(\CP^1) \to \Conf_4(\CP^1)$. We are grateful to Ian Frankel for the suggestion to look at torsion points on elliptic curves. The basic fact underlying the constructions in this section is the following well-known result.

\begin{lemma}\label{lemma:elliptic}
Let $S = \{z_1, z_2, z_3, z_4\} \subset \CP^1$ be an arbitrary $4$-tuple of distinct points. Then there exists an elliptic curve $(E_S,*)$ (with identity element $* \in E_S$) such that under the elliptic involution $\iota: E_S \to \CP^1$, the branch locus in $\CP^1$ is the set $S$. The preimage $\iota^{-1}(S)$ is the set of $2$-torsion points of $(E_S, *)$. 
\end{lemma}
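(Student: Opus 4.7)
The plan is to reduce to the Legendre normal form and then read off both the branch locus and the $2$-torsion from explicit equations.

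First I would invoke the triple-transitivity of $\PGL_2(\C)$ on $\CP^1$ to produce a M\"obius transformation $\mu$ sending $(z_1, z_2, z_3) \mapsto (0,1,\infty)$. Setting $\lambda := \mu(z_4) \in \CP^1 \setminus \{0,1,\infty\}$, it suffices to construct the required double cover for the normalized configuration $S_\lambda := \{0, 1, \infty, \lambda\}$: given such $(E_{S_\lambda}, *)$ with involution $\iota_\lambda$, pulling back by $\mu^{-1}$ produces the curve $(E_S, *)$ with the desired properties, since M\"obius transformations preserve both the branching data and the elliptic structure.

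Next I would introduce the affine curve
\[
y^2 = x(x-1)(x-\lambda)
\]
and take its smooth projective closure $E_\lambda$ with the unique point at infinity serving as the identity $*$. Because $\lambda \ne 0, 1$, the cubic on the right has distinct roots, so $E_\lambda$ is smooth and hence a genuine elliptic curve. The projection $(x,y) \mapsto x$ extends to a degree-two morphism $\iota_\lambda: E_\lambda \to \CP^1$, and a direct check shows that the ramification points are precisely $(0,0)$, $(1,0)$, $(\lambda, 0)$, and the point at infinity, mapping onto $\{0, 1, \infty, \lambda\}$ as required.

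The remaining step is to identify these four ramification points with the $2$-torsion subgroup $E_\lambda[2]$. The key input is that the deck involution of $\iota_\lambda$ coincides with the negation map $P \mapsto -P$ on $E_\lambda$: both are order-two automorphisms fixing $*$ and acting as $-1$ on the one-dimensional space $H^0(E_\lambda, \Omega^1)$, and any such automorphism of an elliptic curve is unique. Consequently, a point $P \in E_\lambda$ is a ramification point of $\iota_\lambda$ if and only if $-P = P$, i.e.\ $P \in E_\lambda[2]$, and since $\#E_\lambda[2] = 4$ this accounts for all four. The only spot that requires care is this identification of the sheet-swap with group-theoretic negation; once that is in hand, pulling back by $\mu^{-1}$ finishes the proof.
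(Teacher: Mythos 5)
The paper states Lemma \ref{lemma:elliptic} as a ``well-known result'' and does not supply a proof, so there is nothing to compare against directly; but your argument via the Legendre normal form $y^2 = x(x-1)(x-\lambda)$ is the standard and correct one. Two small remarks. First, your justification that the deck transformation $(x,y)\mapsto(x,-y)$ coincides with negation leans on the fact that $\Aut(E_\lambda,*)$ acts faithfully on $H^0(E_\lambda,\Omega^1)$; this is true (an automorphism fixing $*$ and acting trivially on the invariant differential is a translation, hence the identity), but it is worth flagging since for $j=0$ or $j=1728$ the automorphism group is larger than $\{\pm 1\}$ and uniqueness is not merely ``there are only two automorphisms.'' A more elementary alternative is the classical group-law computation: the vertical line through $(x,y)$ and $(x,-y)$ meets the projective closure at $*$, so $(x,y)+(x,-y)+*=0$, giving $(x,-y)=-(x,y)$ directly. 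Second, in the reduction step, ``pulling back by $\mu^{-1}$'' just means postcomposing $\iota_\lambda$ with $\mu^{-1}$, so $E_S = E_\lambda$ as a curve and only the branched covering map changes; stating this makes the normalization step airtight.
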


Lemma \ref{lemma:elliptic} leads to the construction of sections of $\Conf_{4,m}(\CP^1) \to \Conf_4(\CP^1)$ for many values of $m$. To formulate the result, let $P(k)$ denote the number of primitive elements of the group $(\Z/k\Z)^2$. An explicit formula for $P(k)$ can be obtained from the observation that $P(p^k) = p^{2k}-p^{2k-2}$ for any prime $p$, in combination with the fact that $P$ is evidently a multiplicative function.

\begin{proposition}\label{proposition:4pts}
Let $m$ be a positive integer of the form $m = 2k^2 - 2$ or $m = \frac{P(4k)}{2}$. Then there exists an algebraic section $\sigma$ of the bundle $\Conf_{4,m}(\CP^1) \to \Conf_4(\CP^1)$. Moreover, $\sigma$ is conformally invariant.
\end{proposition}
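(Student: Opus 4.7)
The plan is to use Lemma \ref{lemma:elliptic}: each $S \in \Conf_4(\CP^1)$ gives an elliptic curve $E_S$ together with a hyperelliptic involution $\iota_S : E_S \to \CP^1$ whose branch locus is $S$, and we construct sections by taking canonical torsion subsets of $E_S$ and pushing them down to $\CP^1$. Choosing any branch point as the identity, $E_S$ becomes an elliptic curve with $\iota_S(P) = -P$ and $E_S[2] = \iota_S^{-1}(S)$.

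The key technical point is to verify that the torsion subsets we use do not depend on the choice of identity among the four branch points, since two such choices differ by translation by a 2-torsion element. The full $2k$-torsion subgroup $E_S[2k]$ is manifestly invariant under this action because $E_S[2] \subseteq E_S[2k]$. The set $X_{4k} := \{P \in E_S : \mathrm{ord}(P) = 4k\}$ is also invariant: if $P \in X_{4k}$ and $O' \in E_S[2]$, then the order of $P + O'$ remains $4k$. The subtle case is to rule out an odd order $d$; such $d$ would force $dP = -O' = O'$, but the only 2-torsion element of the cyclic group $\langle P \rangle$ is $2kP$, and $dP = 2kP$ forces $d \equiv 2k \pmod{4k}$, hence $d$ even, a contradiction. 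With the subsets in hand, we define
\[
\sigma(S) := S \cup \iota_S(E_S[2k] \setminus E_S[2]) \quad (m = 2k^2 - 2)
\]
and
\[
\sigma(S) := S \cup \iota_S(X_{4k}) \quad (m = P(4k)/2).
\]
In each case the added points are non-2-torsion and hence come in $\iota_S$-pairs $\{P, -P\}$; each pair maps to a single point of $\CP^1$ disjoint from $S$, giving $(4k^2 - 4)/2 = 2k^2 - 2$ and $P(4k)/2$ new points respectively.

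Algebraicity follows by writing the family of double covers globally as $y^2 = \prod_{i=1}^4 (x - z_i)$ over $\PConf_4(\CP^1)$, from which $E_S$ varies algebraically in $S$; torsion points are cut out by division polynomials, which are algebraic in the $z_i$. The $S_4$-invariance together with the $E_S[2]$-translation invariance established above let the construction descend to $\Conf_4(\CP^1)$. Conformal invariance follows since any $\phi \in \PGL_2(\C)$ taking $S$ to $S'$ lifts to an isomorphism $E_S \to E_{S'}$ intertwining the involutions and sending the intrinsic torsion subsets to their counterparts. The main obstacle, carried out above, is the invariance calculation for $X_{4k}$; without it the image in $\CP^1$ could depend on a non-canonical choice of identity, and the section would fail to descend from $\PConf_4$ to the unordered configuration space.
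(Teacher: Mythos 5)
Your proof is correct and follows essentially the same route as the paper: use Lemma \ref{lemma:elliptic}, take the $2k$-torsion minus $2$-torsion (resp.\ primitive $4k$-torsion) in $E_S$, observe that these subsets are invariant under translation by $E_S[2]$ so that they descend to $\Conf_4(\CP^1)$, and note that the elliptic involution is free on them so the counts $(4k^2-4)/2$ and $P(4k)/2$ come out right. The paper merely asserts the translation-invariance of the primitive $4k$-torsion set without proof, whereas you spell out the order argument ruling out odd orders of $P + O'$; otherwise the two proofs coincide.
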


\begin{proof}
First consider the case $m = 2k^2-2$. Let $S = \{z_1, z_2, z_3, z_4\} \in \Conf_4(\CP^1)$ be given, and let $(E_S, *)$ be the elliptic curve of Lemma \ref{lemma:elliptic}. The $2k$-torsion subgroup of $(E_S,*)$ has cardinality $4k^2$. Since $2k$ is even, it follows that this set does not depend on which of the four points $\iota^{-1}(z_i)$ is chosen as the identity element. Among these points, exactly four are the $2$-torsion points $\iota^{-1}(S)$. The elliptic involution $x \mapsto -x$ restricts to a {\em free} involution on the remaining $4k^2 - 4$ points. Under $\iota$, these points descend to a set of $2k^2 - 2$ distinct points on $S^2$ that are necessarily disjoint from $S$. The continuity and conformality of this construction are clear. 

The construction for $m = \frac{P(4k)}{2}$ proceeds along similar lines. The set of {\em primitive} $4k$-torsion points is well-defined independently of the choice of origin among the points $\iota^{-1}(S)$, and has cardinality $P(4k)$ by definition. As before, this descends under the elliptic involution to a set of cardinality $\frac{P(4k)}{2}$ in $S^2$. 
\end{proof}

The first few such values of $m$ are given by $m = 6,16, 24, 30, 48, 70$, corresponding respectively to the $4$-torsion, $6$-torsion, primitive $8$-torsion, $8$-torsion (as well as primitive $12$-torsion), and $12$-torsion points. Not all values of $m$ that are ``unobstructed'' in the sense of Theorem \ref{theorem:GG} are represented, although all four allowable residues do appear. For instance, $m = 22$ is unobstructed and yet does not appear on the above list. However, the results are sufficient to prove Theorem \ref{theorem:34}.

\begin{proof}{\em (of Theorem \ref{theorem:34})} The assertions concerning the case $n = 3$ are subsumed by Proposition \ref{proposition:3pts}. The assertions concerning $n=4$ follow readily from Proposition \ref{proposition:4pts} and Theorem \ref{theorem:cabling}. If $m \ge 70$ is congruent to one of the four allowable values $0,6,16,22$ mod $24$, then one can produce a section $\Conf_4(S^2) \to \Conf_{4,m}(S^2)$ by combining the construction of Proposition \ref{proposition:4pts} (for $1$-torsion, $4$-torsion, $6$-torsion, $12$-torsion respectively for $0,6,16,22$ mod $24$) with the cabling construction of Theorem \ref{theorem:cabling}.
\end{proof}

\section{Canonical reduction systems}\label{section:CRS}
The goal of this section is to outline the portion of the theory of canonical reduction systems needed for the proof of Theorem \ref{theorem:main}. We first recall the Nielsen-Thurston classification of elements of $\Mod(S)$, where $S$ is an arbitrary surface of finite type. For this discussion, and for the remainder of the paper, we invoke the usual conventions concerning isotopy: by ``curve'', we really mean ``isotopy class of curve'', by ``disjoint'' we really mean ``existence of disjoint isotopy class representatives'', etc. 

With these stipulations in place, the Nielsen-Thurston classification asserts that each $f \in \Mod(S)$ is exactly one of the following types: {\em periodic, reducible,} or {\em pseudo-Anosov}. A mapping class $f$ is periodic if $f^n = \id$ for some $n \ge 1$, and is reducible if there is some essential multicurve $\gamma \subset S$ fixed (as a set, not necessarily component-wise) by $f$. Otherwise, $f$ is said to be pseudo-Anosov. 

\begin{definition}[Canonical reduction system]
Let $f \in \Mod(S)$ be given. A {\em reduction system} for $f$ is any essential multicurve $\gamma = \{c_1, \dots, c_n\}$ fixed setwise by $f$. A reduction system is {\em maximal} if it is maximal with respect to inclusion of reduction systems for $f$. The {\em canonical reduction system} for $f$, written $\CRS(f)$, is defined to be the intersection of all maximal reduction systems for $f$.
\end{definition}

Canonical reduction systems provide a sort of Jordan form for mapping classes. The role of Jordan blocks is played by the components of the cut-open surface 
\[
S_{\CRS(f)} := S \setminus \CRS(f).
\]
The lemma below follows from \cite[Theorem C]{BLM}; see also \cite[Corollary 13.3]{FM}.
\begin{lemma}\label{lemma:restriction}
Let $f \in \Mod(S)$ be given, and suppose that $f$ preserves some component $S_i$ of $S_{\CRS(f)}$ and so induces an element $f_i \in \Mod(S_i)$. Then $f_i$ is either periodic or else pseudo-Anosov.
\end{lemma}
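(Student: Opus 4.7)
My plan is to argue by contradiction, using the definition of $\CRS(\cdot)$ as the intersection of all maximal reduction systems. Suppose that $f_i$ is neither periodic nor pseudo-Anosov. Then $f_i$ is reducible and has infinite order, so its canonical reduction system $\CRS(f_i) \subset S_i$ is a nonempty essential multicurve on $S_i$. The target contradiction will be to show $\CRS(f_i) \subset \CRS(f)$; this is incompatible with $\CRS(f_i)$ lying in the interior of $S_i$ while $\CRS(f) \subset \partial S_i$, since then $\CRS(f_i) \cap \CRS(f) = \emptyset$ forces $\CRS(f_i) = \emptyset$.

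The crux of the argument is the following \emph{restriction claim}: for every maximal reduction system $\mathcal{M}$ of $f$, the subset $\mathcal{M}_i := \mathcal{M} \cap S_i$ consisting of curves of $\mathcal{M}$ lying in the interior of $S_i$ is itself a maximal reduction system for $f_i$ on $S_i$. I would prove this in two steps. First, because $\mathcal{M} \supset \CRS(f)$ and because $f$ preserves both $\mathcal{M}$ and $S_i$, the collection $\mathcal{M}_i$ is a (possibly empty) $f_i$-invariant essential multicurve on $S_i$, hence a reduction system for $f_i$. Second, for maximality: if an essential $f_i$-invariant multicurve $\gamma$ on $S_i$ made $\gamma \cup \mathcal{M}_i$ essential on $S_i$, then $\gamma \cup \mathcal{M}$ would be an essential $f$-invariant multicurve on $S$ strictly larger than $\mathcal{M}$, contradicting the maximality of $\mathcal{M}$. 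The only bookkeeping here is that $\gamma$ cannot have a component parallel to any curve of $\mathcal{M} \setminus \mathcal{M}_i$ (which lies in other components of $S \setminus \CRS(f)$ or in $\CRS(f)$ itself, and so is disjoint from $S_i$), nor parallel to a component of $\partial S_i \subset \CRS(f)$ (ruled out by the essentiality of $\gamma$ in $S_i$).

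Granted the restriction claim, the conclusion is immediate. For every maximal reduction system $\mathcal{M}$ of $f$, one has $\mathcal{M} \supset \mathcal{M}_i \supset \CRS(f_i)$, the second inclusion using the defining characterization of $\CRS(f_i)$ as the intersection of all maximal reduction systems of $f_i$ on $S_i$. Intersecting over all such $\mathcal{M}$ yields $\CRS(f) \supset \CRS(f_i)$, which together with the disjointness $\CRS(f_i) \cap \CRS(f) = \emptyset$ forces $\CRS(f_i) = \emptyset$, contradicting reducibility of $f_i$.

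I expect the main obstacle to be the maximality half of the restriction claim --- specifically, ensuring that $\gamma \cup \mathcal{M}$ genuinely remains an essential (pairwise non-parallel) multicurve on $S$ when $\gamma \cup \mathcal{M}_i$ is essential on $S_i$. The remaining deductions are formal manipulations with the intersection characterization of canonical reduction systems and the fact that $f$ fixes the component $S_i$ setwise.
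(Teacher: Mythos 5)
Your argument is correct, and it takes a genuinely different route from the paper: the paper does not prove this lemma at all, but simply cites \cite[Theorem C]{BLM} and \cite[Corollary 13.3]{FM}, whereas you give a self-contained derivation from the ``intersection of all maximal reduction systems'' definition of $\CRS$. The heart of your argument is the restriction claim --- that $\mathcal{M}\cap\int(S_i)$ is a maximal reduction system for $f_i$ whenever $\mathcal{M}$ is one for $f$ --- and the bookkeeping you flag does go through: any curve in $\int(S_i)$ that is isotopic in $S$ to a curve disjoint from $\int(S_i)$ would cobound an annulus crossing $\partial S_i$, forcing it to be peripheral in $S_i$, which essentiality rules out; and an isotopy inside $S_i$ extends to one in $S$, so $f_i$-invariance of the extra curves promotes to $f$-invariance. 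From there, $\CRS(f_i)\subset\mathcal{M}_i\subset\mathcal{M}$ for every maximal $\mathcal{M}$, hence $\CRS(f_i)\subset\CRS(f)$, and disjointness of $\CRS(f)$ from $\int(S_i)$ forces $\CRS(f_i)=\emptyset$. The one nontrivial black box you invoke is that an infinite-order reducible mapping class has nonempty canonical reduction system; this is itself a theorem of Birman--Lubotzky--McCarthy, so your proof is not a from-scratch replacement for that reference, but it is a clean reduction of the present lemma to that single fact, which the paper's terse citation does not make explicit.
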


\begin{remark}\label{remark:torsion} In this paper, we are exclusively interested in the case where $S$ is a punctured sphere. Then each component $S_i$ is also a punctured sphere, and so the classification of torsion elements of $\Mod(S_i)$ given in the table \eqref{table:torsion} is applicable. In particular, we see that if $f_i \in \Mod(S_i)$ is periodic and fixes at least three punctures, then $f_i$ is trivial, and any remaining punctures in $S_i$ must also be fixed. We note that by our definitions, a boundary component of $S_i$ (when viewed as a subsurface of $S$) is treated as a puncture when $S_i$ is viewed as an abstract punctured sphere.
\end{remark}

Canonical reduction systems behave as expected under conjugation. We record the following lemma for later use; its proof is trivial.

\begin{lemma}\label{lemma:crsconj}
Let $f,g \in \Mod(S)$ be given. Then
\[
\CRS(f g f^{-1}) = f(\CRS(g)).
\]
In particular, if $f$ and $g$ commute, then $f(\CRS(g)) = \CRS(g)$. 
\end{lemma}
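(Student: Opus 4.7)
The plan is to exploit the fact that $f$ acts as an inclusion-preserving bijection on the set of isotopy classes of essential multicurves in $S$, and that this action intertwines the action of $g$ with the action of $fgf^{-1}$. The conclusion will then follow by unpacking the definition of $\CRS$ as an intersection.

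First, I would verify the basic compatibility: for any essential multicurve $\gamma$, the identity $(fgf^{-1})(f(\gamma)) = f(g(\gamma))$ shows that $f(\gamma)$ is preserved setwise by $fgf^{-1}$ if and only if $\gamma$ is preserved setwise by $g$. Since $f$ is represented by a homeomorphism, it also preserves essentialness of multicurves. Combining these two observations, the assignment $\gamma \mapsto f(\gamma)$ gives a bijection between reduction systems for $g$ and reduction systems for $fgf^{-1}$.

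Next, because $\gamma \mapsto f(\gamma)$ preserves the inclusion relation on multicurves (being induced by a bijection of isotopy classes of simple closed curves), it restricts to a bijection between \emph{maximal} reduction systems for $g$ and maximal reduction systems for $fgf^{-1}$. The same bijectivity implies that $f$ commutes with the operation of intersecting collections of multicurves, so
\[
\CRS(fgf^{-1}) \;=\; \bigcap_{M \text{ max for } fgf^{-1}} \!\!M \;=\; \bigcap_{M' \text{ max for } g} f(M') \;=\; f\!\left( \bigcap_{M'} M' \right) \;=\; f(\CRS(g)).
\]
The ``in particular'' assertion is then immediate by taking $fgf^{-1} = g$ when $f$ and $g$ commute. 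There is essentially no obstacle in this argument: once one pins down that $f$ gives an inclusion-preserving bijection on multicurves that intertwines the $g$-action with the $fgf^{-1}$-action, the rest is formal.
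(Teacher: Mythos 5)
Your proof is correct, and since the paper explicitly omits the proof (calling it trivial), your argument is exactly the standard one the authors had in mind: observe that $\gamma \mapsto f(\gamma)$ is an inclusion-preserving bijection on essential multicurves intertwining the $g$- and $fgf^{-1}$-actions, hence carries (maximal) reduction systems to (maximal) reduction systems, and then push the bijection through the defining intersection.
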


If mapping classes $f, g$ commute, then $\CRS(f)$ and $\CRS(g)$ satisfy an especially nice relationship; see \cite[Proposition 2.6]{lei3}.
\begin{lemma}\label{lemma:crscommute}
Suppose that $f, g \in \Mod(S)$ commute. Then each component of $\CRS(g)$ is either also a component of $\CRS(f)$, or else is disjoint from each component of $\CRS(f)$. 
\end{lemma}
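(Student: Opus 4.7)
The plan is to invoke the Birman--Lubotzky--McCarthy alternative characterization of the canonical reduction system: an essential simple closed curve $c$ belongs to $\CRS(f)$ if and only if (a) some positive power of $f$ fixes $c$ up to isotopy, and (b) $i(c,c')=0$ for every essential simple closed curve $c'$ satisfying (a). Once this characterization is available, the lemma follows from a short formal manipulation.

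First I would verify (or simply cite from \cite{BLM}) the equivalence of this characterization with the ``intersection of all maximal reduction systems'' definition given in the paper. The nontrivial direction is that any $c$ satisfying (a) and (b) must lie in every maximal reduction system $M$: one adjoins to $M$ the full finite $f$-orbit of $c$ and uses (a), (b), together with the fact that each $f^i(c)$ itself satisfies (a) and (b), to show that the enlargement is still a multicurve fixed setwise by $f$; maximality of $M$ then forces $c\in M$. The reverse containment is a pigeonhole argument on the components of a single maximal reduction system, combined with the observation that any curve $c'$ satisfying (a) can be extended to a maximal reduction system, which then must contain $c$.

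With the characterization in hand, let $c$ be a component of $\CRS(g)$ and let $d$ be a component of $\CRS(f)$. Since $fg=gf$, Lemma~\ref{lemma:crsconj} gives $f(\CRS(g))=\CRS(g)$, so $f$ permutes the finitely many components of $\CRS(g)$ and some power $f^N$ fixes $c$ up to isotopy; that is, $c$ satisfies (a) with respect to $f$. Applying (b) to $d\in\CRS(f)$ with $c':=c$ then yields $i(c,d)=0$.

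Finally, two essential simple closed curves with geometric intersection zero are either isotopic or can be realized disjointly. Letting $d$ range over all components of $\CRS(f)$, either $c$ coincides with some such $d$ (so $c$ is itself a component of $\CRS(f)$) or $c$ is disjoint from every component of $\CRS(f)$, which is the desired dichotomy. The main obstacle in the plan is the BLM characterization itself: deriving it from the paper's ``intersection of maximal systems'' definition is routine but slightly fiddly, so in a polished writeup one would most likely invoke it as a known result rather than redo the equivalence.
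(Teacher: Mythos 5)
The paper does not supply its own argument for this lemma: it simply cites \cite[Proposition 2.6]{lei3}, so there is no ``paper's proof'' to compare against. Your approach, via the Birman--Lubotzky--McCarthy characterization of $\CRS(f)$ as the set of essential reduction classes, is the standard direct argument and your main computation is correct: since $f$ commutes with $g$, Lemma~\ref{lemma:crsconj} gives $f(\CRS(g))=\CRS(g)$, so each $c\in\CRS(g)$ is fixed by some power of $f$; property (b) for $d\in\CRS(f)$ then forces $i(c,d)=0$, and zero geometric intersection gives the stated dichotomy.

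One thing to be careful about in your sketch of the equivalence: the direction you actually \emph{use} is ``$c\in\CRS(f)\Rightarrow c$ satisfies (a) and (b),'' and your proposed argument for this --- extend a curve $c'$ satisfying (a) to a maximal reduction system --- does not go through as written. A curve $c'$ fixed by some power of $f$ need not have a pairwise-disjoint $f$-orbit, so $\{c',f(c'),\dots\}$ need not be a multicurve and $c'$ need not lie in \emph{any} reduction system. (The other direction of your sketch, where $c$ satisfies both (a) and (b), is fine, because (b) applied pairwise to the orbit shows it is a multicurve.) This is not a routine point: it is precisely where the real content of the BLM theorem lives, and proving it requires the Nielsen--Thurston machinery. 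So your instinct to cite \cite{BLM} (or Ivanov) for the characterization rather than redo it is the right one; just do not present the equivalence as a formality. With that citation in place, your proof of the lemma is correct.
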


We conclude this section with a useful lemma giving a criterion for the equality of two subsurfaces of a punctured sphere.

\begin{lemma}\label{lemma:partition}
Let $S$ and $S'$ be two subsurfaces of a punctured sphere $\Sigma$. Suppose that the boundaries $\partial S$ and $\partial S'$ have the same number of components, and that each component of $\partial S$ is either disjoint from each component of $\partial S'$, or else is also a component of $\partial S'$. Suppose further that no component of $\partial S'$ is contained in the interior of $S$. If $S$ and $S'$ contain the same number of punctures and there is a puncture $x$ contained in both $S$ and $S'$, then in fact $S$ and $S'$ determine the same isotopy class of subsurface. 
\end{lemma}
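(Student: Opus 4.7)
The plan is to assume $S$ and $S'$ are connected (as is standard in this setting) and analyze the combined boundary multicurve $\Gamma := \partial S \cup \partial S'$, whose components form a disjoint family by the hypothesis that boundary components are either shared or disjoint. Cutting $\Sigma$ along $\Gamma$ yields a collection of pieces, and each piece lies entirely on one side of every curve in $\Gamma$. I would sort the pieces into four types: type $A$ (lying in $S \cap S'$), type $B$ (in $S \setminus S'$), type $C$ (in $S' \setminus S$), and type $D$ (in neither), so that $S = A \cup B$ and $S' = A \cup C$ as subsurfaces.

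The heart of the proof is a case analysis of the two sides of each curve in $\Gamma$. A shared curve in $\partial S \cap \partial S'$ separates pieces of types $A$--$D$ or $B$--$C$. A non-shared curve in $\partial S \setminus \partial S'$ separates $A$--$C$ or $B$--$D$. A non-shared curve in $\partial S' \setminus \partial S$ would \emph{a priori} separate $A$--$B$ or $C$--$D$, but the $A$--$B$ case would place the curve in the interior of $S$, which is forbidden by hypothesis; so all such curves are of type $C$--$D$. The crucial consequence is that no curve in $\Gamma$ exhibits an $A$--$B$ adjacency. Since $S$ is connected and the shared puncture $x$ lies in a type-$A$ piece (so $A$ is nonempty), I conclude $B = \emptyset$. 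Then all shared curves must be of type $A$--$D$ and all non-shared $\partial S$-curves of type $A$--$C$.

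With $B = \emptyset$, the equal puncture counts give that $C$ contains no punctures, and since $\chi(S) = \chi(S')$ (same number of boundary components, same number of punctures, both connected and planar), $\chi(C) = 0$. Each boundary component of a $C$-piece is either an $A$--$C$ curve or a $C$--$D$ curve, giving $\sum_i |\partial C_i| = 2(n-k)$, where $n = |\partial S|$ and $k = |\partial S \cap \partial S'|$. Combined with $\chi_i = 2 - |\partial C_i|$ and the fact that an essential simple closed curve on $\Sigma$ cannot bound an unpunctured disk, this forces each $C$-piece to be an annulus with no punctures, whose two boundary curves are isotopic in $\Sigma$. Since neither $\partial S$ nor $\partial S'$ can contain two isotopic components, each annular $C$-piece has exactly one boundary in $\partial S \setminus \partial S'$ and one in $\partial S' \setminus \partial S$.

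To conclude, I would isotope each non-shared component of $\partial S$ across its adjacent annular $C$-piece onto the matching non-shared component of $\partial S'$, thereby absorbing each $C$-piece into $S$. After these isotopies $\partial S = \partial S'$, and the resulting isotopic copy of $S$ consists of the original $A$-pieces together with all the annular $C$-pieces, which is exactly $A \cup C = S'$. The main obstacle I anticipate is the adjacency bookkeeping in the case analysis; once that is set up cleanly, the Euler characteristic count forcing annular $C$-pieces and the isotopy-absorption step are routine.
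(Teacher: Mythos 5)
Your proof is correct and takes essentially the same route as the paper: both establish $S \subset S'$ (your conclusion $B = \emptyset$) from the no-boundary-in-interior hypothesis together with connectedness and the shared puncture $x$, and then use an Euler characteristic count to show $S' \setminus \operatorname{int}(S)$ is a union of unpunctured annuli across which one isotopes $\partial S$ onto $\partial S'$. Your decomposition into $A$, $B$, $C$, $D$ pieces with the curve-adjacency bookkeeping carefully unpacks what the paper states more tersely.
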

\begin{proof}
The Euler characteristic of either surface is determined by the number of boundary components and the number of punctures contained in the interior. As each surface is a punctured sphere, it follows that moreover, the homeomorphism type is determined by this data, and hence the assumptions imply that $S$ and $S'$ are abstractly homeomorphic. Since no component of $\partial S'$ is contained in the interior of $S$, and since $S$ and $S'$ contain some common puncture, it follows that there is a containment of subsurfaces $S \subset S'$. Since $S$ and $S'$ are assumed to contain the same number of punctures, these must each be contained in $S$. It follows that each boundary component of $S'$ is isotopic to a boundary component of $S$, and the result follows.
\end{proof}

\section{Proof of Theorem \ref{theorem:main}: Preliminaries}\label{section:prelims}
This is the first of five sections dedicated to the proof of Theorem \ref{theorem:main}. The plan is as follows. In Section \ref{section:prelims}, we establish some preliminary ideas. This allows us to give a high-level overview of the proof in Section \ref{section:overview} and to divide the ensuing argument up into two cases A and B. In Section \ref{section:reducible} we prove a pair of crucial lemmas. The arguments for cases A and B are carried out in Sections \ref{section:A} and \ref{section:B}, respectively.

Throughout the proof, fix $n \ge 6$. We remind the reader of the terminology of ``old points'' $\{x_1, \dots, x_n\}$ and ``new points'' $\{y_1, \dots, y_m\}$ of Definition \ref{definition:oldnew}. For the sake of contradiction, we assume that $m$ is the least integer not divisible by $n(n-1)(n-2)$ for which a section $s: B_n(S^2) \to B_{n,m}(S^2)$ exists. By Lemma \ref{lemma:braidtomod}, a section $s: B_n(S^2) \to B_{n,m}(S^2)$ induces a section $s: \Mod_n(S^2) \to \Mod_{n,m}(S^2)$. For the remainder of the proof, we will work in the setting of the mapping class group. We define
\[
\Gamma := s(\Mod_n(S^2)) \leqslant \Mod_{n,m}(S^2).
\]
Before we can give the overview of the proof in the next section, there are three preliminary results that need to be established. In Section \ref{subsection:trans}, we show that $\Gamma$ acts transitively on the set of new points (Lemma \ref{lemma:trans}). In Section \ref{subsection:torsion}, we show that some torsion element fixes a new point (Lemma \ref{lemma:torsion}). Finally in Section \ref{subsection:tree}, we study the canonical reduction system $\CRS(s(\sigma_1))$ and attach to this a tree in a canonical way (Lemma \ref{lemma:tree}).

\subsection{Transitivity on new points}\label{subsection:trans} A first observation to be made is that our hypotheses on $m$ imply that the action of $\Gamma$ on the set of new points is transitive.

\begin{lemma}\label{lemma:trans}
Let $m$ be the minimal integer not divisible by $n(n-1)(n-2)$ for which a section $s: \Mod_n(S^2) \to \Mod_{n,m}(S^2)$ exists. Then $\Gamma$ acts transitively on the set of new points.
\end{lemma}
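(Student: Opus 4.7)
The plan is to argue by contradiction. Suppose $\Gamma$ acts on $\{y_1,\dots,y_m\}$ with orbits $O_1,\dots,O_k$ where $k \geq 2$, and set $m_i := |O_i|$, so that $m_i \geq 1$ and $m = m_1 + \cdots + m_k$. The strategy is to produce, from a non-transitive action, a section of $B_{n,m_i}(S^2) \to B_n(S^2)$ for each $i$, then invoke the minimality of $m$ to conclude that $n(n-1)(n-2) \mid m_i$ for every $i$, which sums to a contradiction.

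The construction of these sub-sections is the key step. I would first observe that the action of $B_{n,m}(S^2)$ on $\{y_1,\dots,y_m\}$ factors through $\Mod_{n,m}(S^2)$ (the central element $\omega$ acts trivially on points), so the image $s(B_n(S^2)) \leqslant B_{n,m}(S^2)$ acts on the new points via the orbit structure of $\Gamma$. Consequently $s(B_n(S^2))$ preserves the bipartition of new points into $O_i$ and its complement, and hence sits inside the subgroup $B_{n,m_i,m-m_i}(S^2) \leqslant B_{n,m}(S^2)$ of braids respecting this three-part partition. The forgetful fiber bundle $\Conf_{n,m_i,m-m_i}(S^2) \to \Conf_{n,m_i}(S^2)$, which erases the $m-m_i$ extraneous new points, induces a homomorphism $\pi_i\colon B_{n,m_i,m-m_i}(S^2) \to B_{n,m_i}(S^2)$; setting $s_i := \pi_i \circ s$ yields a map $s_i\colon B_n(S^2) \to B_{n,m_i}(S^2)$. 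A short diagram chase, using that $\pi_i$ is compatible with the further projections down to $B_n(S^2)$, confirms $p_i \circ s_i = p \circ s = \id$, so $s_i$ is a genuine section.

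To conclude, note that $0 < m_i < m$ for every $i$ (using $k \geq 2$ and $m_i \geq 1$). By minimality of $m$, each $m_i$ must be divisible by $n(n-1)(n-2)$, and summing over $i$ yields $n(n-1)(n-2) \mid m$, contradicting the standing assumption. Hence $k = 1$ and $\Gamma$ acts transitively on new points. The only real subtlety in this argument is keeping the bookkeeping of forgetful maps between braid groups straight, which is standard once one recognises the relevant forgetful maps of configuration spaces as fiber bundles.
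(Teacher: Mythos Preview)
Your proof is correct and follows essentially the same approach as the paper: assume non-transitivity, forget the points outside a chosen orbit to obtain a section with fewer new points, and invoke minimality. The paper is more terse, working directly at the level of $\Mod_n(S^2)$ and observing that some orbit size $m'$ must fail to be divisible by $n(n-1)(n-2)$ (the contrapositive of your summing argument); your detour through the spherical braid groups and the explicit bookkeeping of forgetful maps is unnecessary here but harmless.
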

\begin{proof}
If $\Gamma$ does not act transitively on the set of new points, then there exists some nontrivial $\Gamma$-invariant partition of $\{y_1, \dots, y_m\}$. Let $m'$ denote the cardinality of some part; by forgetting all points not in this part, there is a section $s': \Mod_n(S^2) \to \Mod_{n,m'}(S^2)$. As $m$ is not divisible by $n(n-1)(n-2)$, any nontrivial partition of an $m$-element set necessarily has some part of cardinality $m'<m$ not divisible by $n(n-1)(n-2)$. Such $m'$ contradicts the minimality of $m$. 
\end{proof}

\subsection{Fixed points of torsion elements}\label{subsection:torsion} The essential distinction between the case $m \equiv 0 \pmod {n(n-1)(n-2)}$, where sections of $\Conf_{n,m}(S^2) \to \Conf_n(S^2)$ exist, and $m \not \equiv 0 \pmod{n(n-1)(n-2)}$, where they do not, turns out to be the fact, recorded in Lemma \ref{lemma:torsion} below, that in the latter cases, there always exists some torsion element $\alpha$ that fixes at least one new point. 

\begin{lemma}\label{lemma:torsion}
Suppose that $m \not \equiv 0 \pmod {n(n-1)(n-2)}$, and that a section $s: \Mod_n(S^2) \to \Mod_{n,m}(S^2)$ exists. Then at least one of $\alpha \in \{\alpha_0, \alpha_1\} \subset \Mod_{n}(S^2)$ has the property that $s(\alpha)$ fixes some new point $A$.
\end{lemma}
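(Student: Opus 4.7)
The plan is to view each $s(\alpha_i)$ as a rigid rotation of $S^2$ via the mapping-class version of Murasugi's classification recorded after Lemma \ref{lemma:2torsion}, and to extract divisibility constraints on $m$ from fixed-point counts. First, since $s$ is injective and $p \circ s = \id$, the element $s(\alpha_i)$ has order exactly $n-i$ in $\Mod_{n,m}(S^2)$. As a finite-order mapping class of a punctured sphere, $s(\alpha_i)$ is therefore conjugate in $\Mod_{n+m}(S^2)$ to a rigid rotation of order $n-i$ with exactly two poles on $S^2$; every non-polar orbit has length $n-i$.

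Next I would identify the fixed marked points of $s(\alpha_i)$. Since $s(\alpha_i)$ projects to $\alpha_i$ under $p$, it induces the same permutation on $\{x_1, \dots, x_n\}$ as $\alpha_i$ does. Reading off from Table \eqref{table:torsion}:
\begin{itemize}
\item $s(\alpha_0)$ fixes no old point, so its two poles are each either unmarked or a new point;
\item $s(\alpha_1)$ fixes only $x_n$, so one pole is $x_n$ and the other is either unmarked or a new point;
\item $s(\alpha_2)$ fixes $x_{n-1}$ and $x_n$, which must then account for its two poles, so $s(\alpha_2)$ has no fixed new point.
\end{itemize}
Let $f_i$ denote the number of new points fixed by $s(\alpha_i)$, so $f_0 \in \{0,1,2\}$, $f_1 \in \{0,1\}$, and $f_2 = 0$. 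The $m - f_i$ non-polar new points partition into orbits of length $n-i$, forcing $(n-i) \mid m - f_i$; unconditionally this gives $(n-2) \mid m$.

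I would now argue by contrapositive: assume $f_0 = f_1 = 0$ and derive $m \equiv 0 \pmod{n(n-1)(n-2)}$. Under the assumption, $n \mid m$, $(n-1) \mid m$, and $(n-2) \mid m$. When $n$ is odd the moduli $n, n-1, n-2$ are pairwise coprime, so $n(n-1)(n-2) \mid m$, producing the desired contradiction with $m \not\equiv 0 \pmod{n(n-1)(n-2)}$. The main obstacle is the case of $n$ even: then $\gcd(n,n-2) = 2$, so one obtains only $\tfrac{1}{2}n(n-1)(n-2) \mid m$, leaving the residue class $m \equiv \tfrac{1}{2}n(n-1)(n-2) \pmod{n(n-1)(n-2)}$ to be excluded. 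To rule out this class I would invoke the transitivity of $\Gamma$ on the new points (Lemma \ref{lemma:trans}, which depends on the minimality of $m$): the new points partition into orbits of sizes $n$ under $\langle s(\alpha_0)\rangle$ and $n-2$ under $\langle s(\alpha_2)\rangle$ --- both even, with rotation axes disjoint from the new points --- and $\Gamma$-transitivity should force a parity obstruction that supplies the missing factor of $2$.
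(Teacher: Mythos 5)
Your fixed-point analysis of $s(\alpha_0)$, $s(\alpha_1)$, $s(\alpha_2)$ as rigid rotations coincides exactly with the paper's argument, and your bookkeeping ($f_0 \in \{0,1,2\}$, $f_1 \in \{0,1\}$, $f_2 = 0$, and $(n-i) \mid (m - f_i)$) is correct. The substantive point is that you have put your finger on a genuine subtlety that the paper's own proof passes over. After establishing $(n-2) \mid m$, the paper simply asserts ``it follows that if $m$ is not divisible by $n(n-1)(n-2)$, then $m$ is not divisible by at least one of $n$ or $n-1$,'' but as you observe this is false when $n$ is even: for $n = 6$, $m = 60$ is divisible by $6$, $5$, and $4$ yet not by $120$. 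So your identification of the residue class $m \equiv \tfrac12 n(n-1)(n-2) \pmod{n(n-1)(n-2)}$ (for even $n$) as the problematic case is accurate and more careful than the source.

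Where your proposal falls short is in the patch for this case. The appeal to ``$\Gamma$-transitivity should force a parity obstruction'' is not an argument; transitivity of the infinite group $\Gamma$ on $m$ points carries no divisibility consequence by itself, and the two even orbit lengths $n$ and $n-2$ already give nothing beyond $\operatorname{lcm}(n,n-1,n-2) \mid m$, which is exactly the information you are stuck with. You would need to say precisely what invariant you are computing and why it changes parity. Moreover, invoking Lemma \ref{lemma:trans} silently adds the hypothesis that $m$ is the \emph{minimal} counterexample, which is not part of the statement of Lemma \ref{lemma:torsion} as written (although it does hold in the one place the lemma is used in the paper, so this is a restatement issue rather than a fatal one). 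In short: your reduction to the even-$n$ residue class is correct and exposes a real gap in the paper's write-up, but your proposed closure of that gap is a guess, not a proof, so the argument as you have written it is incomplete.
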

\begin{proof}
We first claim that if a section exists, necessarily $m \equiv 0 \pmod {n-2}$. To see this, we study $\alpha_2 \in \Mod_n(S^2)$. This fixes two old points, and hence $s(\alpha_2)$ also fixes these points. By Remark \ref{remark:torsion}, $s(\alpha_2)$ has no further fixed points. Thus the set of $m$ new points decomposes as a union of $s(\alpha_2)$-orbits, each of cardinality $n-2$. 

It follows that if $m$ is not divisible by $n(n-1)(n-2)$, then $m$ is not divisible by at least one of $n$ or $n-1$. If $m \equiv k \pmod n$ for some integer $1 \le k < n$, then the action of $s(\alpha_0)$ on the set of new points has $k > 0$ fixed points. Similar reasoning shows that $s(\alpha_1)$ has a new fixed point whenever $m \not \equiv 0 \pmod{n-1}$. 
\end{proof}

\subsection{Canonical reduction systems and trees}\label{subsection:tree} We come now to the key object of interest. We will study the set
\[
\mathscr C:= \CRS(s(\sigma_1)). 
\]
The structure of $\mathscr C$ is best encoded as a graph.

\begin{definition}
The graph $\mathscr T$ has vertices in bijection with the components of $S^2_{\mathscr C}$, and edges in bijection with elements of $\mathscr C$. An edge $c \in \mathscr C$ joins the components $S_1, S_2 \subset S^2_{\mathscr C}$ for which $c$ is a boundary component of both $S_1$ and $S_2$.
\end{definition}

\begin{lemma}\label{lemma:tree}
The graph $\mathscr T$ is a tree.
\end{lemma}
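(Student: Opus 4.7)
The proof plan rests on the classical fact that every simple closed curve in $S^2$ is separating; this underlies both the connectedness and acyclicity of $\mathscr{T}$.

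First I would verify connectedness. Given two vertices corresponding to components $S_i, S_j \subset S^2_{\mathscr{C}}$, pick interior points $p_i \in S_i$ and $p_j \in S_j$ and join them by an arc $\gamma \subset S^2$ transverse to $\mathscr{C}$ (or disjoint from it). Since $S^2$ is path connected and $\mathscr{C}$ is a 1-dimensional submanifold avoiding the punctures, such a $\gamma$ exists. The arc $\gamma$ meets $\mathscr{C}$ in finitely many points, decomposing $\gamma$ into subarcs each lying in a single component of $S^2_{\mathscr{C}}$; the sequence of components traversed and curves crossed provides an edge-path in $\mathscr{T}$ from $S_i$ to $S_j$.

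Next I would show $\mathscr{T}$ has no cycles by proving every edge of $\mathscr{T}$ is a bridge. Fix $c \in \mathscr{C}$; since $c$ is a simple closed curve in $S^2$, by the Jordan curve theorem $S^2 \setminus c = U_1 \sqcup U_2$ with $U_1, U_2$ disjoint and nonempty. The remaining curves $\mathscr{C} \setminus \{c\}$ are disjoint from $c$, so each one lies entirely in $U_1$ or in $U_2$. Consequently every component of $S^2_{\mathscr{C}}$ is contained in either $U_1$ or $U_2$, and the two components adjacent to $c$ lie on opposite sides. Thus deleting the edge $c$ from $\mathscr{T}$ partitions the vertex set according to which $U_i$ the component sits in, producing two nonempty classes with no edge between them (since any other curve of $\mathscr{C}$ is confined to a single $U_i$). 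Hence $c$ is a bridge.

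Combining the two steps, $\mathscr{T}$ is a connected graph in which every edge is a bridge, so it contains no cycle and is therefore a tree. I do not anticipate any serious obstacle; the only subtlety is making sure the transverse-arc argument is legitimate (one may replace $\gamma$ by an isotopic arc meeting $\mathscr{C}$ transversely, using that $\mathscr{C}$ is a properly embedded 1-submanifold of the punctured sphere), and that the Jordan curve theorem applies — which it does since the curves of $\mathscr{C}$ are essential simple closed curves in $S^2$ viewed topologically, even though we are working up to isotopy on the punctured sphere.
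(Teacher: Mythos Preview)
Your argument is correct but proceeds along a different line from the paper. The paper handles connectedness with a single sentence (``clear from the construction'') and then establishes acyclicity via an Euler-characteristic count: writing $V$ and $E$ for the numbers of vertices and edges, each component $S_i$ of $S^2_{\mathscr C}$ is a planar surface with $b_i$ boundary circles, so $\chi(S_i) = 2 - b_i$; since the pieces are glued along circles, additivity gives $2 = \chi(S^2) = \sum_i (2 - b_i) = 2V - 2E$, whence $V - E = 1$ and the connected graph $\mathscr T$ is a tree. Your route instead invokes the Jordan curve theorem to show directly that every edge is a bridge. Both are perfectly valid. The Euler-characteristic computation is a bit slicker and has the advantage of generalizing immediately to higher-genus surfaces (where it shows the dual graph has first Betti number equal to the genus); your Jordan-curve argument is more hands-on and makes the role of the sphere explicit, at the cost of being specific to genus zero. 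Either way there is nothing missing.
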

\begin{proof}
It is clear from the construction that $\mathscr T$ is connected. Let $V, E$ denote the number of vertices and edges of $\mathscr T$, respectively. As $\mathscr T$ is connected, it follows that $\mathscr T$ is a tree if and only if the Euler characteristic satisfies
\[
\chi(\mathscr T) = V-E = 1.
\]
Enumerate the components of $S^2_{\mathscr C}$ as $S_1, \dots, S_V$. A component $S_i$ of $S^2_{\mathscr C}$ has Euler characteristic $2 - b_i$, where $b_i$ is the number of boundary components of $S_i$, i.e. the number of edges of $\mathscr T$ incident to $S_i$. Since each pair $S_i, S_j$ of components of $S^2_{\mathscr C}$ meet in $S^2$ along a union of circles (each of Euler characteristic zero), the cut-and-paste formula
\[
\chi(A \cup B) = \chi(A) + \chi(B) - \chi(A \cap B)
\] 
for the Euler characteristic gives the following expression for $\chi(S^2)$:
\[
2 = \chi(S^2) = \sum_{i = 1}^V (2-b_i) = 2V -  \sum_{i = 1}^V b_i = 2V -2 E.
\]
The result follows.
\end{proof}

\section{Proof of Theorem \ref{theorem:main}: Overview}\label{section:overview}
As we have already remarked, our standing assumption is that $n \ge 6$ and that $m$ is the minimal integer not divisible by $n(n-1)(n-2)$ for which a section $s: \Mod_n(S^2) \to \Mod_{n,m}(S^2)$ exists; Lemma \ref{lemma:trans} implies that $\Gamma$ acts transitively on the set of new points. Our strategy will be to derive a contradiction to the transitivity assumption, or else to show that $\Gamma$ is {\em reducible}: there exists a nonempty set $\mathscr R$ of disjoint essential curves satisfying $\Gamma(\mathscr R) = \mathscr R$. By Lemmas \ref{lemma:reduciblesame} and \ref{lemma:reducibledistinct} below, this will also produce a contradiction. Lemmas \ref{lemma:reduciblesame} and \ref{lemma:reducibledistinct} are established in Section \ref{section:reducible}.

\begin{lemma}\label{lemma:reduciblesame}
Fix $n\ge 3$, and let $s: \Mod_n(S^2) \to \Mod_{n,m}(S^2)$ be a section of $p: \Mod_{n,m}(S^2) \to \Mod_n(S^2)$. Suppose that $\Gamma$ acts transitively on the set of new points, and that there is a $\Gamma$-invariant subsurface $S \subset S^2$ that contains at least one old point. Then either $m$ is divisible by $n(n-1)(n-2)$, or else there is some $m' < m$ with $m'$ not divisible by $n(n-1)(n-2)$ and a section $s': \Mod_n(S^2) \to \Mod_{n,m'}(S^2)$.
\end{lemma}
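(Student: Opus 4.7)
The strategy is to use the $\Gamma$-invariance of $S$ together with transitivity of $\Gamma$ on new points to partition the new points into equal-size clusters, then replace each cluster by a single canonical representative to obtain a section with strictly fewer new points.

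First I would observe that $p \circ s = \id$ makes the restriction of $p$ to $\Gamma$ an isomorphism onto $\Mod_n(S^2)$, so $\Gamma$ acts on the old points as the full symmetric group $S_n$. The $\Gamma$-invariant set $S \cap \{x_1,\dots,x_n\}$ is therefore all $n$ old points, as it is nonempty by hypothesis. Similarly $S \cap \{y_1,\dots,y_m\}$ is $\Gamma$-invariant, so by transitivity it is either empty or all $m$. After isotoping $S$ to remove null-homotopic or peripheral components of $\partial S$ (which does not affect $\Gamma$-invariance), the alternative ``all new points in $S$'' forces each boundary circle of $S$ to bound a disk in $S^c$ with no punctures, hence to be null-homotopic; then $\partial S$ is trivial and $S$ is isotopic to $S^2$, yielding no structure. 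So without loss of generality all new points lie in $S^c$, and $\partial S$ is an essential $\Gamma$-invariant multicurve separating old points from new.

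Next I would analyze the components $T_1,\dots,T_r$ of $S^c$, permuted by $\Gamma$. The union of the components in any single $\Gamma$-orbit is $\Gamma$-invariant, so its intersection with $\{y_1,\dots,y_m\}$ is a $\Gamma$-invariant subset of the new points, which by transitivity is either empty or everything. Thus there is a unique $\Gamma$-orbit $O$ of components of $S^c$ containing new points, each component of $O$ carries exactly $m/k$ of them where $k = |O|$, and $k \mid m$; since $m$ is not divisible by $n(n-1)(n-2)$, neither is $k$. To build the smaller section I would invoke Proposition \ref{proposition:promotion} to realize $s$ as a continuous section $S : \Conf_n(S^2) \to \Conf_{n,m}(S^2)$; the $\Gamma$-equivariant subsurface data then varies continuously with the old-point configuration, giving continuously-varying components of $O$. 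Within each such component I would continuously select a canonical interior representative (for instance a spherical barycenter of the $m/k$ new points it contains, perturbed as necessary to avoid the old points and the boundary). The resulting continuous section $S' : \Conf_n(S^2) \to \Conf_{n,k}(S^2)$ induces $s' : \Mod_n(S^2) \to \Mod_{n,k}(S^2)$ with $m' = k$ not divisible by $n(n-1)(n-2)$, so if $m/k \ge 2$ we get $k < m$ and are done.

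The main obstacle is the degenerate case $m/k = 1$, in which each component of $O$ already carries exactly one new point and the above collapse does not reduce $m$. Essentiality of $\partial S$ at least forces each such component to have at least two boundary circles. In this situation I would run the symmetric argument on the components of $S$ (which carry the old points): they are permuted by $\Gamma$ with one orbit of $q$ components each containing $n/q$ old points, $q \mid n$; selecting one continuously-varying interior point from each produces a section $\Mod_n(S^2) \to \Mod_{n,q}(S^2)$ with $m' = q \le n$ not divisible by $n(n-1)(n-2)$. To finish one must confirm that $q < m$, which I would extract from the bipartite tree structure on components of $S^2 \setminus \partial S$ (cf.\ Lemma \ref{lemma:tree}) combined with the $\Gamma$-action: the combinatorics of this tree constrains the possible values of $(q,k)$ and rules out the case $q \ge m = k$. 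This combinatorial bookkeeping is the main technical wrinkle in the proof.
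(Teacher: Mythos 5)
Your plan follows the same underlying idea as the paper's proof---push all old points into $S$, use transitivity to force new points out, and collapse each component of the complement to one new marked point---so the strategy is sound. But the execution has two significant issues.

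\textbf{First, you miss the observation that closes the argument.} Once you know all old points lie in $S$ and all new points lie in $S^c$, each component of $S^c$ contains only new points. If $S$ is connected (which it is in every application of the lemma; an Euler characteristic count on $S^2$ then shows the complementary components are disks), each such component is a disk bounded by one circle of $\partial S$, and for that circle to be \emph{essential} the disk must contain at least two new points. Hence $m \ge 2m'$ where $m'$ is the number of boundary components of $S$, i.e.\ $m' < m$ automatically, and $m = m' p$ for the common per-disk count $p \ge 2$. Your ``degenerate case'' $m/k = 1$ therefore cannot occur; your statement that essentiality forces such a component ``to have at least two boundary circles'' contradicts the fact that these components are disks. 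The considerable effort you devote to this vacuous case---the symmetric argument on components of $S$, the appeal to the bipartite tree combinatorics---is both unnecessary and, as you yourself concede (``this combinatorial bookkeeping is the main technical wrinkle''), not actually carried to completion. That is the genuine gap: you do not finish the proof, and the sub-argument you sketch for the nonexistent case is unconvincing (picking new marked points \emph{inside} $S$, where the old points also live, does not cleanly produce a section of the form you need).

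\textbf{Second, your construction of $s'$ is more elaborate than necessary.} You descend to the bundle level via Proposition~\ref{proposition:promotion}, argue that the invariant multicurve varies continuously with the configuration, and pick a perturbed barycenter in each disk. All of this can be done, but it requires care to make rigorous. The paper instead stays entirely algebraic: since $\Gamma$ preserves $S$, the composition $r \circ s$ with the restriction map $r\colon \Mod_{n,m}(S^2,S) \to \Mod_n(S) \cong \Mod_{n,m'}(S^2)$ (which treats each boundary circle of $S$ as a new puncture) is already a homomorphism, and one checks in a line that it is a section because $p = p' \circ r$ on $\Mod_{n,m}(S^2,S)$. This is the same underlying geometry as your ``cap off each disk with a marked point,'' but packaged at the group level it avoids all the continuity bookkeeping. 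I would recommend reorganizing your proof around this restriction homomorphism and supplying the essentiality observation above; with those two changes your argument becomes essentially the paper's.
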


\begin{lemma}\label{lemma:reducibledistinct}
Fix $n\ge 3$, and let $s: \Mod_n(S^2) \to \Mod_{n,m}(S^2)$ be a section of $p: \Mod_{n,m}(S^2) \to \Mod_n(S^2)$. Suppose that $\Gamma$ acts transitively on the set of new points, and that there is a $\Gamma$-invariant set $\{S_1, \dots, S_n\}$ of subsurfaces, each with a single boundary component $c_i$, such that $x_i \in S_i$ for $i = 1, \dots, n$. Then $m$ is divisible by $n(n-1)(n-2)$.
\end{lemma}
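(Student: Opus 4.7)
The plan is to combine torsion constraints coming from $\alpha_1, \alpha_2 \in \Mod_n(S^2)$ with a restriction-to-subsurface argument controlled by Remark \ref{remark:torsion}. Working under the natural assumption that the $S_i$ are pairwise disjoint (as happens when the $c_i$ form a multicurve; nesting is ruled out by $\Gamma$-transitivity on $\{S_i\}$), each $S_i$ is an essential disk containing only the old point $x_i$, so any $\gamma \in \Gamma$ sending $S_i$ to $S_j$ must send $x_i$ to $x_j$. Thus the $\Gamma$-action on $\{S_1, \dots, S_n\}$ coincides with the canonical surjection $\Mod_n(S^2) \onto S_n$ acting on $\{x_1, \dots, x_n\}$, and in particular is transitive. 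This forces each $S_i$ to contain the same number $k$ of new points, with $k \ge 1$ by essentiality of $c_i$. Since the set of new points in $\bigcup_i S_i$ is $\Gamma$-invariant and $\Gamma$ acts transitively on new points, this set must be all of them, giving $k = m/n$.

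Now consider $\alpha_2$, which has order $n-2$ and fixes $x_{n-1}, x_n$. Its image $s(\alpha_2)$ preserves $S_{n-1}$ and $S_n$ setwise, and restricts via Lemma \ref{lemma:restriction} to a periodic element $f := s(\alpha_2)|_{S_n}$ on $S_n$, viewed (by Remark \ref{remark:torsion}) as an abstract $(k+2)$-punctured sphere with $c_n$ treated as a puncture. Then $f$ fixes the two punctures $c_n, x_n$ and has order dividing $n-2$. The key claim is that $f$ has order exactly $n-2$: if the order were a proper divisor $d$ of $n-2$, then $s(\alpha_2^d) = s(\alpha_2)^d$ would restrict trivially to $S_n$, hence fix $x_n$ and all $k$ new points in $S_n$, and also fix $x_{n-1}$ globally, for a total of $k+2 \ge 3$ fixed marked points. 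By Remark \ref{remark:torsion}, this forces $s(\alpha_2^d) = 1$, contradicting $\alpha_2^d \ne 1$ in $\Mod_n(S^2)$. Murasugi's classification applied to $\Mod_{k+2}(S^2)$ then shows that an order-$(n-2)$ element fixing exactly two punctures is of $\alpha_2$-type, whose order divides $k$; so $(n-2) \mid k$.

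Running the identical argument with $\alpha_1$ (order $n-1$, fixing only $x_n$), and exploiting that $k \ge n-2 \ge 4$ is already established, yields that $s(\alpha_1)|_{S_n}$ has order exactly $n-1$ and hence $(n-1) \mid k$. Since $\gcd(n-1, n-2) = 1$, we conclude $(n-1)(n-2) \mid k = m/n$, and hence $n(n-1)(n-2) \mid m$. The main subtlety is the restriction-to-subsurface step: one relies on Lemma \ref{lemma:restriction} to ensure the restriction stays periodic, and on the convention of Remark \ref{remark:torsion} that the boundary $c_n$ counts as a puncture, which brings both Murasugi's classification and the ``$\ge 3$ fixed punctures implies trivial'' dichotomy to bear on the abstract punctured sphere $S_n$.
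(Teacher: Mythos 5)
Your proof is correct and follows the same overall strategy as the paper: establish that the new points are evenly distributed among the $n$ disjoint disks $S_i$ (giving $m = nk$), then use the torsion elements $\alpha_1, \alpha_2$ together with Murasugi's classification to extract the divisibility $(n-1)(n-2) \mid k$. The one place where you genuinely diverge from the paper's write-up is the Murasugi step. The paper works with $s(\alpha_{j,i})$ directly in $\Mod_{n+m}(S^2)$ and appeals to Remark \ref{remark:powers} to say all orbits of new points in $S_i$ have size $n-j$; as written this glosses over the possibility that $s(\alpha_{1,i})$, having only one old fixed point, might be conjugate to an $\alpha_2$-type element of $\Mod_{n+m}(S^2)$ and so could fix a single new point in $S_i$, which would break the orbit count. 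You instead restrict to $\Mod(S_n) \cong \Mod_{k+2}(S^2)$, where the boundary curve $c_n$ becomes a \emph{second} guaranteed fixed puncture alongside $x_n$. This extra puncture, combined with the ``three fixed punctures implies trivial'' dichotomy from Remark \ref{remark:torsion}, cleanly forces the restriction to be nontrivial of full order $n-j$ and of $\alpha_2$-type with no new fixed points, which is a tidier and more watertight route to $(n-j) \mid k$. Two small remarks: the citation of Lemma \ref{lemma:restriction} is unnecessary since the restriction of a finite-order mapping class to an invariant subsurface is automatically periodic; and your invocation of $k \ge n-2 \ge 4$ uses $n \ge 6$, so your argument as written covers the range actually used in the paper but not the full range $n \ge 3$ claimed in the statement (neither, arguably, does the paper's).
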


The argument proceeds by studying some distinguished components of $S^2_{\mathscr C}$. For $i=3,...,n$, let $S_i$ be the component of $S^2_{\mathscr C}$ that contains the old point $x_i$. The $S_i$ are not necessarily pairwise distinct. To get a better understanding of the set $\{S_i\}$, we make the following observations. By Lemma \ref{lemma:crsconj}, if $g\in \text{Mod}_{n}(S^2)$ commutes with $\sigma_1$, then $s(g)$ induces a permutation of $\mathscr C$ and hence an automorphism $g_*$ of $\mathscr T$. Moreover, the following lemma shows that there is a large supply of such elements $g$ for which the behavior on the set of old points is prescribed. The proof is elementary and is omitted. 
\begin{lemma}\label{equidistance}
For any pair of distinct old points $x_i, x_j$ with $i, j \ge 3$, there exists an element $g\in \Mod_n(S^2)$ such that $g$ commutes with $\sigma_1$ and such that $g(x_i)=x_3$ and $g(x_j)=x_4$.
\end{lemma}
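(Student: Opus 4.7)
The plan is to produce $g$ by taking a homeomorphism supported in a disk disjoint from the support of (a geometric representative of) $\sigma_1$, so that the two mapping classes automatically commute. Represent $\sigma_1$ as the half-twist about an embedded arc $\alpha \subset S^2$ joining $x_1$ and $x_2$ which is disjoint from $\{x_3,\dots,x_n\}$, and let $N$ be a closed tubular neighborhood of $\alpha$ disjoint from $\{x_3,\dots,x_n\}$; then $\sigma_1$ has a representative supported in $N$.

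Next, choose a closed disk $D \subset S^2 \setminus N$ whose interior contains $\{x_3,\dots,x_n\}$. Any homeomorphism of $D$ that is the identity on $\partial D$ and permutes the points $\{x_3,\dots,x_n\}$ extends by the identity to a homeomorphism of $S^2$, and the resulting mapping class in $\Mod_n(S^2)$ commutes with $\sigma_1$ since their supports are disjoint. This construction yields a homomorphism
\[
\iota: \Mod(D,\{x_3,\dots,x_n\};\partial D) \longrightarrow C_{\Mod_n(S^2)}(\sigma_1),
\]
where the source is the mapping class group of $D$ fixing $\partial D$ pointwise and permuting the marked points; this source is the classical braid group $B_{n-2}$, which surjects onto $S_{n-2}$ via the permutation representation on $\{x_3,\dots,x_n\}$.

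Finally, since $n \ge 6$ we have $n-2 \ge 4$, and $S_{n-2}$ acts transitively on ordered pairs of distinct elements of $\{x_3,\dots,x_n\}$. Choose a permutation $\tau \in S_{n-2}$ with $\tau(x_i) = x_3$ and $\tau(x_j) = x_4$, lift $\tau$ to an element of $B_{n-2}$, and take $g$ to be its image under $\iota$. By construction, $g$ commutes with $\sigma_1$ and satisfies $g(x_i) = x_3$ and $g(x_j) = x_4$, as required. There is no real obstacle beyond setting up these disjoint supports correctly.
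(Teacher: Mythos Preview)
Your proof is correct and is exactly the sort of argument the authors had in mind; in fact the paper simply states that the proof ``is elementary and is omitted.'' The disjoint-supports idea is the natural one: realize $\sigma_1$ as a half-twist supported near an arc through $x_1,x_2$, take a complementary disk containing $x_3,\dots,x_n$, and braid inside that disk.

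One small remark: the condition you invoke, ``since $n\ge 6$ we have $n-2\ge 4$,'' is stronger than necessary. The symmetric group $S_k$ acts transitively on ordered pairs of distinct elements as soon as $k\ge 2$, so the argument already goes through for any $n\ge 4$. This does not affect correctness in the paper's setting (where $n\ge 6$ is a standing assumption), but it slightly overstates the needed hypothesis.
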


For any $g$ as in Lemma \ref{equidistance}, the induced automorphism $g_*$ of $\mathscr T$ is in fact an {\em isometry}, equipping $\mathscr T$ with the metric $\delta$ in which edges have length $1$. Since $s(g)$ permutes the components of $S^2_{\mathscr C}$ and $g(x_i)=x_3$, it follows that $g_*(S_i)=S_3$. Similarly $g_*(S_j)=S_4$. Thus in the metric graph $(\mathscr T, \delta)$,  
\[
\delta(S_i,S_j)=\delta(g_*(S_i),g_*(S_j))=\delta(S_3,S_4).
\]
Therefore $\delta(S_i,S_j)=d$ a constant which does not depend on $i,j$. There are two possibilities: either (A) $d=0$, so that $S:= S_3 = \dots = S_n$, or else (B) $d>0$, so that each $S_3, \dots, S_n$ is a distinct subsurface of $S^2_{\mathscr C}$.

To analyze Case A, we appeal to the theory of canonical reduction systems. Since $s(\sigma_1)$ fixes a point $x_3\in S$, it follows that $s(\sigma_1)$ fixes the component $S$. Lemma \ref{lemma:restriction} then implies that the restriction of $s(\sigma_1)$ to $S$ is either pseudo-Anosov or else periodic. We handle each possibility in turn, as Cases A.1 and A.2, respectively. Case A.1 is resolved by showing that $S$ is necessarily $\Gamma$-invariant; this contradicts Lemma \ref{lemma:reduciblesame}. 

The analysis of Case A.2, where $s(\sigma_1)$ is assumed to be periodic, requires a further division into subcases. Lemma \ref{lemma:torsion} guarantees the existence of torsion elements of $\Gamma$ that fix at least one new point $A$. Case A.2 subdivides into Cases A.2.a and A.2.b, depending on whether $A$ is contained in $S$ or not. In Case A.2.a, where $A \in S$, we will show that either $A$ is a global fixed point, contradicting transitivity, or else that there is a nontrivial torsion element with $3$ fixed points, contradicting Remark \ref{remark:torsion}. In the alternative Case A.2.b, we will produce an essential $\Gamma$-invariant curve, contradicting Lemma \ref{lemma:reduciblesame}.

The ultimate aim in Case B is to show that $\Gamma$ is reducible. In Lemma \ref{lemma:c3}, we produce a collection $c_3, \dots, c_n$ of distinguished boundary components of $S_3, \dots, S_n$. After analyzing how $s(\alpha_0)$ acts on this set in Lemma \ref{lemma:sconj}, we are able to define two further curves $c_1, c_2$. We then show in Lemma \ref{lemma:invariant} that the set of curves $\{c_1, \dots, c_n\}$ is $\Gamma$-invariant, leading to a contradiction with Lemma \ref{lemma:reducibledistinct}.

\section{Proof of Theorem \ref{theorem:main}: The reducible case}\label{section:reducible}
In this section we treat the situation where $\Gamma := s(\Mod_n(S^2))$ is reducible. The objective is to prove Lemmas \ref{lemma:reduciblesame} and \ref{lemma:reducibledistinct}, reproduced for the reader's convenience below.\\

\noindent \textbf{Lemma \ref{lemma:reduciblesame}.} \textit{Fix $n\ge 3$, and let $s: \Mod_n(S^2) \to \Mod_{n,m}(S^2)$ be a section of $p: \Mod_{n,m}(S^2) \to \Mod_n(S^2)$. Suppose that $\Gamma$ acts transitively on the set of new points, and that there is a $\Gamma$-invariant subsurface $S \subset S^2$ that contains at least one old point. Then either $m$ is divisible by $n(n-1)(n-2)$, or else there is some $m' < m$ with $m'$ not divisible by $n(n-1)(n-2)$ and a section $s': \Mod_n(S^2) \to \Mod_{n,m'}(S^2)$.}

\begin{proof}\textit{(of Lemma \ref{lemma:reduciblesame})} A first observation is that $S$ contains {\em all} old points. Indeed, if $x_i \in S$, then for any $j = 1, \dots, n$, there exists $\phi_j \in \Gamma$ for which $\phi_j(x_i) = x_j$. As $x_i \in S$ and $S$ is $\Gamma$-invariant, it follows that $x_j \in S$ as well.

By hypothesis, $s$ is valued in the subgroup $\Mod_{n,m}(S^2, S)$ of mapping classes that preserve the subsurface $S$. There is a restriction map 
\[
r: \Mod_{n,m}(S^2,S) \to \Mod_n(S) \cong \Mod_{n,m'+m''}(S^2)
\]
where $m'$ is the number of boundary components of $S$, and $m''$ is the number of new points contained in $S$. Setting $s' := r \circ s$, 
we obtain a new homomorphism 
\[
s': \Mod_n(S^2) \to \Mod_{n,m'+m''}(S^2).
\]

We claim that $m'' = 0$, that $m' < m$, and that if $m'$ is divisible by $n(n-1)(n-2)$, then $m$ is as well. To see these claims, observe that since $S$ contains all of the old points, each component of $S^2 \setminus S$ contains only new points. Since each boundary component of $S$ is essential, there must be at least two new points contained in each component of $S^2 \setminus S$; this shows $m' < m$. By hypothesis, $\Gamma$ acts transitively on the set of new points. Since $S$ is $\Gamma$-invariant, any new points contained in $S$ cannot be exchanged with new points off of $S$, and so $m'' = 0$ as claimed. Moreover, $\Gamma$ must act transitively on the set of components of $S^2 \setminus S$. Letting $p$ denote the number of new points contained in each component, we see that $m = m' p$. Thus if $m'$ is divisible by $n(n-1)(n-2)$, so is $m$.

To establish Lemma \ref{lemma:reduciblesame}, it now suffices to show that $s'$ is a section of the forgetful map $p': \Mod_{n,m'}(S^2) \to \Mod_n(S^2)$. Recall that $s: \Mod_n(S^2) \to \Mod_{n,m}(S^2)$ is a section of the forgetful map $p: \Mod_{n,m}(S^2) \to \Mod_n(S^2)$. The claim now follows from the factorizations
\[
p|_{\Mod_{n,m}(S^2,S)} = p' \circ r
\]
and
\[
s' = r \circ s.
\]
\end{proof}

\noindent \textbf{Lemma \ref{lemma:reducibledistinct}.} \textit{Fix $n\ge 3$, and let $s: \Mod_n(S^2) \to \Mod_{n,m}(S^2)$ be a section of $p: \Mod_{n,m}(S^2) \to \Mod_n(S^2)$. Suppose that $\Gamma$ acts transitively on the set of new points, and that there is a $\Gamma$-invariant set $\{S_1, \dots, S_n\}$ of subsurfaces, each with a single boundary component $c_i$, such that $x_i \in S_i$ for $i = 1, \dots, n$. Then $m$ is divisible by $n(n-1)(n-2)$.}

\begin{proof}\textit{(of Lemma \ref{lemma:reducibledistinct})}
We claim that {\em all} new points are contained inside the set 
\[
\bigcup_{i=1}^n S_i.
\]
Certainly there must exist {\em some} new point in each $S_i$, as otherwise $c_i$ would be inessential. Since $\Gamma$ acts transitively on the set of new points and the set $\cup S_i$ is $\Gamma$-invariant, the claim follows.

To conclude the argument, we count the number of new points. As $\Gamma$ permutes the subsurfaces $S_i$, each contains the same number $m'$ of new points. For any $i = 1, \dots, n$, there is a conjugate $\alpha_{2,i}$ of $\alpha_2$ that fixes the point $x_i$. It follows that $S_i$ is $s(\alpha_{2,i})$-invariant, and hence the set of new points contained in $S_i$ decomposes as a union of orbits of $s(\alpha_{2,i})$. By Remark \ref{remark:powers}, each orbit contains $n-2$ points, so that $(n-2) \mid m'$. 

Likewise, let $\alpha_{1,i}$ be a conjugate of $\alpha_1$ that fixes $x_i$. Then $s(\alpha_{1,i})$ also fixes $S_i$ and so decomposes the new points in $S_i$ into a union of orbits. By Remark \ref{remark:powers}, each orbit contains $n-1$ points, so that also $(n-1) \mid m'$. We conclude that $(n-1)(n-2) \mid m'$, and as $m = n m'$, Lemma \ref{lemma:reducibledistinct} follows.
\end{proof}

\section{Proof of Theorem \ref{theorem:main}: Case A}\label{section:A}
The assumption in Case A is that $S:=S_3=S_4=...=S_n$. As discussed in the overview given in Section \ref{section:overview}, Case A divides into two subcases.

\noindent \underline{(1) $s(\sigma_1)$ is pseudo-Anosov on $S$. }  By Lemma \ref{lemma:alpha}, $\alpha_0^{2} \sigma_1\alpha_0^{-2}=\sigma_3$. Therefore 
\[
\CRS(s(\sigma_3)) = \CRS(s(\alpha_0^{2}\sigma_1\alpha_0^{-2})) = s(\alpha_0^2) \cdot \CRS(s(\sigma_1)) = s(\alpha_0^2) \cdot \mathscr C.
\]
Since $\sigma_1$ and $\sigma_3$ commute, Lemma \ref{lemma:crscommute} implies that any pair of curves $c \in \CRS(s(\sigma_3))$ and $d \in \CRS(s(\sigma_1)) = \mathscr C$ are disjoint or else equal. Let $\partial S \subset \mathscr C$ denote the set of boundary components of $S$. Then every element of $s(\alpha_0^2)(\partial S)$ is disjoint from the elements of $\partial S$, or else is also an element of $\partial S$. Since $s(\sigma_1)$ is pseudo-Anosov on $S$, Lemma \ref{lemma:crsconj} implies that none of the elements of $s(\alpha_0^2)(\partial S)$ are contained in the interior of $S$.

On the other hand, $S$ and $s(\alpha_0^2)(S)$ contain the same number of punctures and each contains the puncture $x_5$. Lemma \ref{lemma:partition} then implies that $\alpha_0^2(S)=S$. By the same reasoning, $\alpha_0^3(S)=S$, and it follows that $\alpha_0(S)=S$. By Lemma \ref{lemma:genset}, the entire group $\Gamma$ preserves the component $S$. Thus the hypotheses of Lemma \ref{lemma:reduciblesame} are satisfied, leading to a contradiction with the minimality assumption on $m$.

\noindent \underline{(2)  $s(\sigma_1)$ is periodic on $S$. } We first claim that in fact $s(\sigma_1)$ is the identity on $S$. This follows from the fact that $\sigma_1$ fixes at least the three old points $x_3, x_4, x_5 \in S$, in combination with Remark \ref{remark:torsion}. 

As in Lemma \ref{lemma:torsion}, let $\alpha$ be whichever of $\alpha_0, \alpha_1$ fixes some new point $A$. There are two possibilities: 

\begin{enumerate}[(a)]
\item $A\in S$:

Since $s(\sigma_1)$ acts by the identity on $S$, necessarily $s(\sigma_1)(A) = A$. Since also $s(\alpha)(A) = A$ by construction, it follows that the subgroup $G \leqslant \Mod_{n,m}(S^2)$ generated by $s(\sigma_1)$ and $s(\alpha)$ fixes $A$. If $\alpha = \alpha_0$, Lemma \ref{lemma:genset} implies that $G = \Gamma$. But then $\Gamma$ does not act transitively on the set of new points, in contradiction with Lemma \ref{lemma:trans}.

If $\alpha = \alpha_1$, Lemma \ref{lemma:genset2} implies that $G = s(\Mod_{n-1,1}(S^2))$. Thus $s$ restricts to give an injective homomorphism
\[
s: \Mod_{n-1,1}(S^2) \to \Mod_{n-1,1,m-1,1}(S^2).
\]
The element $\alpha_2 \in \Mod_{n}(S^2)$ is contained in $\Mod_{n-1,1}(S^2)$ and is torsion of order $n-2$ with two fixed points $x_{n-1}, x_n$, both old. Thus $s(\alpha_2)$ must also be torsion of order $n-2$ with two fixed old points. By Remark \ref{remark:torsion}, $s(\alpha_2)$ cannot have any further fixed points, but by definition every element of $\Mod_{n-1,1,m-1,1}(S^2)$ fixes the new point $A$, a contradiction.

\item $A\notin S$: 

In this case, there exists a curve in $c\in \partial S$ separating $x_3,...,x_n$ from $A$.  Such a $c$ is necessarily $s(\sigma_1)$-invariant, since $s(\sigma_1)$ acts as the identity on $S$. We claim that $c$ must also be $s(\alpha)$-invariant, and must moreover preserve the subsurfaces on either side of $c$. 
\begin{proof}
Since $n \ge 6$, Lemma \ref{lemma:alpha} implies that $\alpha^2\sigma_1\alpha^{-2}=\sigma_3$, and thus $s(\alpha^2)(c)$ belongs to $\CRS(s(\sigma_3))$. It follows that $s(\alpha^2)(c)$ is either disjoint from $c$ or else $s(\alpha^2)(c)=c$. We will see that $s(\alpha^2)(c) = c$ must hold. Let $S_A \subset S^2$ denote the subsurface bounded by $c$ that contains $A$.

We claim that the pair of surfaces $S_A$ and $s(\alpha^2)(S_A)$ satisfy the hypotheses of Lemma \ref{lemma:partition}. Each surface has a single boundary component $c, s(\alpha^2)(c)$ respectively, and we have already established that $c$ and $s(\alpha^2)(c)$ are either disjoint or equal. Each surface contains the point $A$, and as they are conjugate within $\Gamma$, each contains the same number of punctures. 

It remains to show that $s(\alpha^2)(c)$ is not contained in the interior of $S_A$. If this is the case, then $s(\alpha^2)(c)$ encloses a {\em strict} subset of the punctures contained in $S_A$. The curve $c$ induces a partition $P = P_1 \cup P_2$ of the set of punctures, and likewise $s(\alpha^2)(c)$ induces the conjugate partition $s(\alpha^2)(P)$. Without loss of generality, assume that $P_1$ corresponds to the punctures in $S_A$ and hence contains $A$, so that $P_2$ contains the points $x_3, \dots, x_n$. Since $s(\alpha^2)(c)$ encloses a strict subset of the punctures contained in $S_A$, one of the parts of $s(\alpha^2)(P)$ must be a strict subset of $P_1$. This part cannot be $s(\alpha^2)(P_1)$, since $s(\alpha^2)(P_1)$ has the same cardinality as $P_1$. But this part cannot be $s(\alpha^2)(P_2)$ either, since $P_2$ contains $x_3$ and hence $s(\alpha^2)(P_2)$ contains $x_5 \in P_2$.

By Lemma \ref{lemma:partition}, we have $\alpha^2(S_A) = S_A$. As $n \ge6$, also $\alpha^3 \sigma_1 \alpha^{-3} = \sigma_4$. The same argument then shows that $s(\alpha^3)(S_A) = S_A$, and hence $s(\alpha)(S_A) = S_A$. The claim follows.\end{proof}

We have shown that $s(\sigma_1)$ and $s(\alpha)$ both fix $c$ as well as the subsurfaces on either side of $c$. Let $S$ be the side containing the points $x_3, \dots, x_n$. In the case $\alpha = \alpha_0$, necessarily $S$ is globally invariant, in contradiction with Lemma \ref{lemma:reduciblesame}. If $\alpha = \alpha_1$, then we have shown that the image of the subgroup $\Mod_{n-1,1}(S^2) = \pair{\sigma_1, \alpha_1}$ under $s$ is contained in the subgroup $\Mod_{n-1,1,m}(S^2, S)$ of mapping classes fixing $S$. Composing with the map $r: \Mod_{n-1,1,m}(S^2, S) \to \Mod_{n-1,1,m',1}(S^2)$ obtained by restriction to $S$, we can now conclude the argument exactly as in the preceding Case A.2.a.
\end{enumerate}

\section{Proof of Theorem \ref{theorem:main}: Case B}\label{section:B}
The assumption in Case B is that the subsurfaces $S_3, \dots, S_n$ are all distinct. This case follows by an analysis of the boundary components of the subsurfaces $S_i$. A first observation is that the (necessarily disjoint) subsurfaces $S_i$ are all conjugate within $\Gamma$: for any $i \ge 3$, there is some $g \in \Mod_n(S^2)$ commuting with $\sigma_1$ and taking $x_3$ to $x_i$. Then $s(g)(S_3) = S_i$.

\begin{lemma}\label{lemma:c3}
For each $i = 3, \dots, n$, there is a unique component $c_i$ of $\partial S_i$ that separates $S_i$ from $p > \frac{n+m}{2}$ punctures.
\end{lemma}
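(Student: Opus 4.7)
The plan is to exploit the Case B symmetry to locate a canonical central vertex $C$ of the tree $\mathscr T$ equidistant from each of $S_3, \ldots, S_n$, to define $c_i$ as the unique boundary component of $S_i$ through which the geodesic in $\mathscr T$ from $S_i$ to $C$ first exits, and then to verify by a direct count that this $c_i$ cuts off more than half the punctures.

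First I would establish the tree-geometric fact that any collection of three or more vertices of a tree at a common pairwise distance $d$ forces $d$ to be even and admits a unique Steiner vertex at distance $d/2$ from each. (For three vertices the pairwise geodesics meet at a common vertex with the required distances; for larger collections, overlapping triples agree.) By Section \ref{section:overview}, Case B supplies $n-2 \ge 4$ distinct subsurfaces $S_3, \ldots, S_n$ with common pairwise distance $d > 0$ in $\mathscr T$, so a unique vertex $C \neq S_i$ exists with $\delta(S_i, C) = d/2$ for each $i$. Then I would set $c_i$ to be the first edge on the geodesic from $S_i$ to $C$. Because $C$ lies on every geodesic between distinct $S_j$, the branch of $\mathscr T \setminus \{c_i\}$ not containing $S_i$ automatically contains every other $S_j$.

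The heart of the argument will be a counting step enabled by conjugation symmetry. Given $i$, Lemma \ref{equidistance} supplies $g \in \Mod_n(S^2)$ commuting with $\sigma_1$ with $g(x_3) = x_i$; the induced tree isometry $g_*$ permutes $\{S_3, \ldots, S_n\}$, hence fixes $C$ by uniqueness of the Steiner vertex, and therefore carries $c_3$ to $c_i$. Thus the puncture count $|S|$ inside $S_i$ and the puncture count $Q$ in the peripheral branch of $S_i$ (the side of $c_i$ containing $S_i$) are independent of $i$. Since distinct peripheral branches lie in distinct subtrees of $\mathscr T \setminus \{C\}$, they are pairwise disjoint in $S^2$, yielding
\[
(n-2)(|S| + Q) \;\le\; n + m.
\]
The puncture count on the far side of $c_i$ is $p = n + m - |S| - Q$, and
\[
p - \tfrac{n+m}{2} \;=\; \tfrac{n+m}{2} - (|S| + Q) \;\ge\; \tfrac{n+m}{2} - \tfrac{n+m}{n-2} \;=\; \tfrac{(n+m)(n-4)}{2(n-2)} \;>\; 0
\]
for $n \ge 6$. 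For uniqueness, two distinct boundary components of $S_i$ each cutting off more than $(n+m)/2$ punctures would yield disjoint heavy sides in $S^2$ (they lie in distinct subtrees at $S_i$), together containing more than $n + m$ punctures, a contradiction.

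The principal obstacle I anticipate is the tree-geometric preliminary---in particular, confirming that the common Steiner center is a genuine vertex (equivalently, that $d$ is even) and that it is fixed by every tree isometry induced by an element of $\Mod_n(S^2)$ commuting with $\sigma_1$. Once that is in hand, existence reduces to the displayed one-line estimate, which uses only $n \ge 6$, and uniqueness is a one-line pigeonhole.
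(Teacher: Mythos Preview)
Your argument is correct and takes a genuinely different route from the paper's. The paper works locally with just two of the subsurfaces: it picks the boundary component $d_1$ of $S_3$ separating $S_3$ from $S_4$, observes that $s(\sigma_3)$ (which commutes with $\sigma_1$ and swaps $S_3$ with $S_4$) must send all the other branches $D_2,\dots,D_k$ as well as $S_3$ itself into $D_1$, and concludes $n_1 \ge n_0 + n_2 + \cdots + n_k$, i.e.\ $n_1 \ge (n+m)/2$; strictness is then squeezed out by bringing in a third conjugate. Your approach is global and symmetric: you locate the Steiner center $C$ of the equidistant family $\{S_3,\dots,S_n\}$ in $\mathscr T$, define $c_i$ as the first edge from $S_i$ toward $C$, and use disjointness of the $n-2$ conjugate peripheral branches to bound each by $(n+m)/(n-2)$, which immediately beats $(n+m)/2$ once $n\ge 5$. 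The paper's argument is lighter on tree combinatorics (no Steiner center needed), while yours makes the dependence on $n$ transparent, yields strict inequality for free, and produces $c_i$ already packaged with the property that it separates $S_i$ from every other $S_j$---a fact the paper has to recover separately later. One small point worth making explicit in your write-up: the reason $g_*$ permutes $\{S_3,\dots,S_n\}$ (and hence fixes $C$) is that any $g$ commuting with $\sigma_1$ has image in $S_n$ commuting with the transposition $(1\,2)$, so it must preserve $\{x_3,\dots,x_n\}$ setwise.
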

\begin{proof}
Since the subsurfaces $S_i$ are all conjugate within $\Gamma$, it suffices to consider only $S_3$. Certainly if $c_3$ exists it must be unique. To see that it exists, denote the boundary components of $S_3$ by $d_1, \dots, d_k$. For $1 \le i \le k$, let $D_i$ denote the disk bounded by $d_i$ not containing $S_3$, and let $n_i$ denote the number of punctures in $D_i$. Without loss of generality, assume that $S_3$ is separated from $S_4 = s(\sigma_3)(S_3)$ by $d_1$. Then $S_4$ is separated from $S_3$ by some other element $s(\sigma_3)(d_i) \in \mathscr C$, for some $1 \le i \le k$. Necessarily $i = 1$, since if $i > 1$, then there is a strict containment $D_i \subset s(\sigma_3)(D_i)$, an absurdity. It follows that there is a containment
\[
s(\sigma_3)(D_2) \cup \dots \cup s(\sigma_3)(D_k) \cup s(\sigma_3(S_3)) \subset D_1,
\]
and hence, letting $n_0$ denote the number of punctures contained in $S_3$ itself,
\[
n_1 \ge n_2 + \dots + n_k + n_0.
\]
On the other hand, $\sum_{i = 0}^k n_i = n+m$, from which the inequality $n_1 \ge \frac{n+m}{2}$ follows. Moreover, this inequality must be strict, since otherwise the disks $D_1, s(\sigma_3)(D_1), s(\sigma_4 \sigma_3)(D_1)$ would be mutually disjoint and each would contain $\frac{n+m}{2}$ points. Taking $c_3 := d_1$, the result follows. 
\end{proof}

\begin{lemma}\label{lemma:sconj}
$s(\alpha_0^2)(c_3)=c_5$ and $s(\alpha_0^3)(c_3)=c_6$.
\end{lemma}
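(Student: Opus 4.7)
The plan is to prove both equalities by way of the uniqueness characterization in Lemma \ref{lemma:c3}. I focus on $s(\alpha_0^2)(c_3) = c_5$; the argument for $s(\alpha_0^3)(c_3) = c_6$ is identical after substituting $(\alpha_0^3, \sigma_4, x_6, S_6, c_6)$ for $(\alpha_0^2, \sigma_3, x_5, S_5, c_5)$ throughout, using $\alpha_0^3 \sigma_1 \alpha_0^{-3} = \sigma_4$ from Lemma \ref{lemma:alpha}.

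First I would record the compatibility of the two relevant canonical reduction systems. By Lemma \ref{lemma:alpha}, $\alpha_0^2 \sigma_1 \alpha_0^{-2} = \sigma_3$, so Lemma \ref{lemma:crsconj} gives $s(\alpha_0^2)(\mathscr C) = \CRS(s(\sigma_3))$. Since $\sigma_1$ and $\sigma_3$ commute, Lemma \ref{lemma:crscommute} forces every curve of $s(\alpha_0^2)(\mathscr C)$ to either coincide with a curve of $\mathscr C$ or be disjoint from every curve of $\mathscr C$. Applying the homeomorphism $s(\alpha_0^2)$ to the defining properties of $(c_3, S_3)$ from Lemma \ref{lemma:c3}, the curve $s(\alpha_0^2)(c_3)$ is a boundary component of the subsurface $s(\alpha_0^2)(S_3)$, which contains $s(\alpha_0^2)(x_3) = x_5$, and $s(\alpha_0^2)(c_3)$ separates $s(\alpha_0^2)(S_3)$ from more than $(n+m)/2$ punctures.

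The core of the argument is then to prove $s(\alpha_0^2)(S_3) = S_5$ as isotopy classes of subsurfaces; once this is established, $s(\alpha_0^2)(c_3)$ is a boundary of $S_5$ separating it from more than $(n+m)/2$ punctures, and the uniqueness statement of Lemma \ref{lemma:c3} forces $s(\alpha_0^2)(c_3) = c_5$. To identify the two subsurfaces I would apply Lemma \ref{lemma:partition} with $S = s(\alpha_0^2)(S_3)$ and $S' = S_5$. Four of the five hypotheses are immediate: both subsurfaces contain $x_5$; both have the same number of punctures, since $|s(\alpha_0^2)(S_3)| = |S_3|$ as $s(\alpha_0^2)$ is a homeomorphism and $|S_3|=|S_5|$ by the $\Gamma$-conjugacy of the $S_i$ established in the overview; both have the same number of boundary components, as they share topological type; and the boundary-compatibility condition follows from Lemma \ref{lemma:crscommute}.

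The main obstacle is the fifth hypothesis: no component of $\partial S_5$ lies in the interior of $s(\alpha_0^2)(S_3)$. Suppose for contradiction that $d \in \partial S_5$ lies in the interior of $s(\alpha_0^2)(S_3)$. Then $d \in \mathscr C$ is not a boundary of $s(\alpha_0^2)(S_3)$, so by Lemma \ref{lemma:crscommute} $d$ is disjoint from every curve of $s(\alpha_0^2)(\mathscr C)$. The curve $d$ then splits $s(\alpha_0^2)(S_3)$ into two nonempty parts: one on the $S_5$-side of $d$ containing $x_5$, and one on the disk $D$ opposite $S_5$, which contributes at least one puncture of $s(\alpha_0^2)(S_3)$ disjoint from $S_5$. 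Since $|s(\alpha_0^2)(S_3)| = |S_5|$, at least one puncture of $S_5$ must therefore lie outside $s(\alpha_0^2)(S_3)$; such a puncture would need to be separated from $s(\alpha_0^2)(S_3)$ by a curve of $s(\alpha_0^2)(\mathscr C)$ that is also a boundary of $S_5$, but this is incompatible with $d$ already sitting in the interior of $s(\alpha_0^2)(S_3)$ combined with the tree structure from Lemma \ref{lemma:tree}. Once this final hypothesis is verified, Lemma \ref{lemma:partition} yields $s(\alpha_0^2)(S_3)=S_5$, completing the proof.
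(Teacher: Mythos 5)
Your overall plan — feed the two relevant subsurfaces into Lemma \ref{lemma:partition} and then invoke the uniqueness clause of Lemma \ref{lemma:c3} — is in the same spirit as the paper, and most of your set-up is sound. But you apply Lemma \ref{lemma:partition} to the wrong pair of subsurfaces: $S_5$ versus $s(\alpha_0^2)(S_3)$. These are components of $S^2_{\mathscr C}$ and of $S^2 \setminus s(\alpha_0^2)(\mathscr C)$ respectively, and in general each has many boundary components. This makes the fifth hypothesis of Lemma \ref{lemma:partition} (no component of $\partial S_5$ lies in the interior of $s(\alpha_0^2)(S_3)$) genuinely difficult, and your argument for it has a gap: if such a curve $d$ sits in the interior of $s(\alpha_0^2)(S_3)$, the piece of $s(\alpha_0^2)(S_3)$ that $d$ cuts off on the side away from $x_5$ need not contain a puncture of $s(\alpha_0^2)(S_3)$ at all — it could instead be bounded by additional boundary components of $s(\alpha_0^2)(S_3)$ whose enclosed punctures lie \emph{outside} $s(\alpha_0^2)(S_3)$. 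Your appeal to ``the tree structure from Lemma \ref{lemma:tree}'' does not close this off; it would need a substantive argument, and as written the contradiction does not follow.

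The paper avoids this by applying Lemma \ref{lemma:partition} not to $S_5$ and $s(\alpha_0^2)(S_3)$ but to the \emph{disks} $\int(c_5)$ and $\int(c_5')$, where $\int(c_5)$ is the component of $S^2 \setminus c_5$ containing $x_5$ and $c_5' := s(\alpha_0^2)(c_3)$ with $\int(c_5')$ defined analogously. Each of these has exactly one boundary component, so four of the hypotheses are immediate, and the ``no boundary in interior'' condition becomes a clean counting argument: if $c_5'$ were interior to $\int(c_5)$, one side of $c_5'$ would contain strictly fewer than $q := |\int(c_5)|$ punctures, but by Lemma \ref{lemma:c3} the inside of $c_5'$ has exactly $q$ punctures and the outside has $n+m-q > q$. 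Lemma \ref{lemma:partition} then gives $\int(c_5) = \int(c_5')$, hence $c_5 = c_5'$. Your argument should be reorganized around these disks rather than around the subsurfaces $S_i$ themselves; note in particular that you do not need (and the paper does not prove) the stronger claim $s(\alpha_0^2)(S_3) = S_5$.
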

\begin{proof}
We define the {\em inside} of each $c_i$ to be the component $\int(c_i)$ that contains $x_i$, and define the {\em outside} as the other component. Each $\int(c_i)$ contains $q$ punctures, with $q < \frac{n+m}{2}$ by Lemma \ref{lemma:c3}. Define $c_5':= s(\alpha_0^2)(c_3)$. We again define the {\em inside} of $c_5'$ as the component $\int(c_5')$ containing $x_5$, and the {\em outside} as the other component. 

We claim that $\int(c_5)$ and $\int(c_5')$ satisfy the hypotheses of Lemma \ref{lemma:partition}. As $c_5 \in \mathscr C$ and $c_5' \in s(\alpha_0)^2 \mathscr C = \CRS(s(\sigma_3))$, we have that $c_5$ and $c_5'$ are either disjoint or equal. By definition, each contains $x_5$. The curves $c_3$ and $c_5$ contain the same number of punctures on their interiors, hence the same is true of $c_5$ and $c_5'$. It remains to be seen that $c_5'$ is not contained in the interior of $\int(c_5)$. If this is the case, then either the inside or the outside of $c_5'$ contains strictly fewer than $q$ punctures. But as the inside of $c_5'$ contains $q$ punctures and the outside contains $n+m-q > q$ punctures, this cannot be the case.

Applying Lemma \ref{lemma:partition}, it follows that $c_5 = c_5' = s(\alpha_0)^2(c_3)$ as claimed. Similar arguments establish the other claim. \end{proof}

Define the curves $c_1=s(\alpha_0^{-2})(c_3)$ and $c_2=s(\alpha_0^{-2})(c_4)$. 
\begin{lemma}
The curves $c_1, \dots, c_n$ are pairwise distinct and disjoint.
\end{lemma}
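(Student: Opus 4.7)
The plan is to first handle distinctness of $c_3,\dots,c_n$, then reduce everything else to an analysis of the disks $\int(c_j)$ via the tree $\mathscr T$. If $c_i = c_j$ for some $i \ne j$ both in $\{3,\dots,n\}$, then $S_i$ and $S_j$ would lie on opposite sides of this shared curve, forcing each side to contain fewer than $\tfrac{n+m}{2}$ punctures by two applications of Lemma \ref{lemma:c3}, an absurdity. So $c_3,\dots,c_n$ are distinct and hence pairwise disjoint as elements of the multicurve $\mathscr C$. For $c_1, c_2$, observe that $\alpha_0^{-2}\sigma_1\alpha_0^{2} = \sigma_{n-1}$ (from Lemma \ref{lemma:alpha} together with the fact that $\alpha_0$ has order $n$ in $\Mod_n(S^2)$), so Lemma \ref{lemma:crsconj} places both $c_1, c_2$ in $\CRS(s(\sigma_{n-1}))$. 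Since $\sigma_1, \sigma_{n-1}$ commute, Lemma \ref{lemma:crscommute} forces each of $c_1, c_2$ to be either an element of $\mathscr C$ or disjoint from every element of $\mathscr C$. Thus, once distinctness from $c_3,\dots,c_n$ is verified, disjointness from them is automatic, and $c_1, c_2$ are disjoint from each other as curves in a common multicurve. Finally, $c_1 = c_2$ would yield $c_3 = c_4$ via $s(\alpha_0^2)$, ruled out above.

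The heart of the proof is to rule out $c_1 = c_i$ and $c_2 = c_i$ for $i \ge 3$. Since $\int(c_1) = s(\alpha_0^{-2})(\int(c_3))$ contains $x_1$ and has fewer than $\tfrac{n+m}{2}$ punctures, the equality $c_1 = c_i$ would force $\int(c_1) = \int(c_i)$ by puncture counts, whence $x_1 \in \int(c_i)$. The goal becomes showing $x_1, x_2 \notin \int(c_i)$ for any $i \ge 3$. The main step, which I anticipate as the core challenge, is to prove that the disks $\int(c_j)$ for $j \ge 3$ are pairwise disjoint. Letting $T_j \subset \mathscr T$ denote the subtree containing $S_j$ after cutting the edge $c_j$, I exploit elements $g$ of the centralizer of $\sigma_1$ that fix $x_j$ and realize arbitrary permutations of $\{x_3,\dots,x_n\}\setminus\{x_j\}$ (available by supporting $g$ in a disk disjoint from $\{x_1, x_2, x_j\}$) to argue that $S_k \in T_j$ for some $k \ne j$ implies $S_{k'} \in T_j$ for all $k' \ge 3$. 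Were the second branch of this dichotomy to hold for every $j$, each $c_j$ would be an off-branch from $S_j$ bypassing the Steiner tree of $\{S_3,\dots,S_n\}$; the $n-2 \ge 4$ complementary subtrees $\mathscr T \setminus T_j$ would then be pairwise disjoint (attached at the distinct vertices $S_j$) and each carry more than $\tfrac{n+m}{2}$ punctures, a total exceeding $n+m$. Hence $T_j \cap \{S_3,\dots,S_n\} = \{S_j\}$ for every $j$.

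A short path-analysis in $\mathscr T$ now yields $T_j \cap T_k = \emptyset$ for $j \ne k$: any common vertex $v$ must differ from every $S_i$, and considering the unique branch point $b$ of the three tree-geodesics joining $v, S_j, S_k$ pairwise (with $b \in \{v, S_j, S_k\}$ or $b$ elsewhere), in each case some path $v \to S_j$ or $v \to S_k$ is forced to cross $c_j$ or $c_k$, contradicting $v$'s placement in $T_j$ or $T_k$. Consequently $\int(c_j) \cap \int(c_k) = \emptyset$ in $S^2$. Finally, if $x_1 \in \int(c_{j_0})$ for some $j_0 \ge 3$, selecting $g$ in the centralizer of $\sigma_1$ with $g(x_1) = x_1$ and $g(x_{j_0}) = x_j$ and applying $s(g)$ places $x_1 \in \int(c_j)$ for every $j \ge 3$, contradicting the disjointness just established. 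The identical argument rules out $x_2 \in \int(c_i)$, completing the proof.
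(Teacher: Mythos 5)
Your proof is correct, and in fact more careful than the paper's own argument at the crucial step: the paper simply asserts that $c_1$ and $c_2$ are ``uniquely characterized by the property of containing $x_1$ and $x_2$ in their interiors,'' whereas you correctly identify that this requires proving $x_1, x_2 \notin \int(c_i)$ for $i \ge 3$, and you supply a proof. Your first two paragraphs (distinctness of $c_3,\dots,c_n$ via two applications of Lemma~\ref{lemma:c3}, placement of $c_1, c_2$ in $\CRS(s(\sigma_{n-1}))$ via Lemmas~\ref{lemma:alpha} and~\ref{lemma:crscommute}, the reduction to disjointness of the disks $\int(c_i)$, and the equivariance argument pushing $x_1$ into every $\int(c_j)$) all check out and mirror what the paper leaves implicit.

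That said, your third paragraph -- the dichotomy on $T_j \cap \{S_3,\dots,S_n\}$ followed by the branch-point case analysis -- is a substantial detour for a fact that has a two-line proof. The disks $\int(c_j)$ for $j \ge 3$ have pairwise disjoint boundaries (they lie in the multicurve $\mathscr C$) and each contains the same number $q < \tfrac{n+m}{2}$ of punctures (they are $\Gamma$-conjugate). Two such disks in $S^2$ are either disjoint, nested, or have union $S^2$: nesting would force the annulus between two distinct, non-isotopic curves of $\mathscr C$ to be unpunctured, while union-equals-$S^2$ would force $2q \ge n+m$. Both are absurd, so the disks are pairwise disjoint. This eliminates the need for the dichotomy, the Steiner-tree bypass argument, and the median-vertex case analysis -- all of which are correct as you have sketched them, but none of which are needed. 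It is worth training yourself to reach for the puncture-count-plus-disjoint-boundary argument first when working with disks on a punctured sphere; it recurs throughout this kind of canonical-reduction-system analysis (compare the proof of Lemma~\ref{lemma:partition}).
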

\begin{proof}
The curves $c_3, \dots, c_n$ are distinct and disjoint since they are all elements of $\mathscr C$ and each $c_i$ is distinguished by the property that it contains $x_i$ on its inside. The curves $c_1$ and $c_2$ are elements of $s(\alpha_0^{-2})\mathscr C = \CRS(\sigma_{n-1})$, and hence either disjoint from or equal to any element of $\mathscr C$. But $c_1$ and $c_2$ are uniquely characterized by the property of containing $x_1$ and $x_2$ in their interiors, respectively, and the claim follows. 
\end{proof}

\begin{lemma}\label{lemma:invariant}
The set $\{c_1,c_2,...,c_n\}$ is invariant under $\Gamma$. 
\end{lemma}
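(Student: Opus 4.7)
The plan is to invoke Lemma \ref{lemma:genset} and verify that each of the two generators $s(\sigma_1)$ and $s(\alpha_0)$ of $\Gamma$ preserves the set $\{c_1, \dots, c_n\}$.

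First, for $s(\sigma_1)$, the argument is relatively direct. Since $s(\sigma_1)$ commutes with itself, Lemma \ref{lemma:crsconj} gives $s(\sigma_1)(\mathscr{C}) = \mathscr{C}$; for $i \ge 3$, $\sigma_1$ fixes $x_i$, so $s(\sigma_1)$ preserves $S_i$ and permutes its boundary components, and the uniqueness statement of Lemma \ref{lemma:c3} forces $s(\sigma_1)(c_i) = c_i$. For $i \in \{1,2\}$, the key observation is that $\sigma_1$ commutes with $\sigma_{n-1}$ (as $n \ge 6$), so $s(\sigma_1)$ preserves $\CRS(s(\sigma_{n-1})) = s(\alpha_0^{-2})\mathscr{C}$. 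Pushing Lemma \ref{lemma:c3} forward by $s(\alpha_0^{-2})$, the curves $c_1, c_2$ are characterized as the unique big boundaries of the components of $S^2_{\CRS(s(\sigma_{n-1}))}$ containing $x_1$ and $x_2$ respectively; since $s(\sigma_1)$ swaps $x_1 \leftrightarrow x_2$, it swaps these two components, hence swaps $c_1 \leftrightarrow c_2$.

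The more delicate part is handling $s(\alpha_0)$. I would introduce the auxiliary curves
\[
c'_i \;:=\; s(\alpha_0^{i-3})(c_3), \qquad i = 1, 2, \dots, n,
\]
where exponents are read mod $n$, using that $s(\alpha_0)^n = \id$ in $\Mod_{n,m}(S^2)$. By construction $s(\alpha_0)(c'_i) = c'_{i+1 \bmod n}$, so $\{c'_1, \dots, c'_n\}$ is cyclically permuted by $s(\alpha_0)$; it therefore suffices to prove $c'_i = c_i$ for every $i$. The cases $i = 3$ and $i = 1$ are immediate from the definitions. For $i \in \{5, 6, \dots, n\}$, the argument of Lemma \ref{lemma:sconj} applies verbatim: by Lemma \ref{lemma:crsconj}, $c'_i \in \CRS(s(\sigma_{i-2}))$, and since $\sigma_{i-2}$ commutes with $\sigma_1$ whenever $i \ge 5$, Lemma \ref{lemma:crscommute} makes $c'_i$ compatible with $\mathscr{C}$; Lemma \ref{lemma:partition} applied to the insides of $c'_i$ and $c_i$ then yields equality.

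The main obstacle is the case $i = 4$: here $c'_4 = s(\alpha_0)(c_3)$ lies in $\CRS(s(\sigma_2))$, but $\sigma_1$ and $\sigma_2$ do \emph{not} commute, so a direct comparison with $c_4 \in \mathscr{C}$ is unavailable. The key trick will be to factor through Lemma \ref{lemma:sconj}: because $n \ge 6$ ensures that $c_6$ exists and $s(\alpha_0^3)(c_3) = c_6$, one can rewrite
\[
c'_4 \;=\; s(\alpha_0)(c_3) \;=\; s(\alpha_0^{-2})\bigl(s(\alpha_0^3)(c_3)\bigr) \;=\; s(\alpha_0^{-2})(c_6).
\]
This latter curve lies in $\CRS(s(\sigma_{n-1}))$, which \emph{is} compatible with $\mathscr{C}$, so the combination of Lemma \ref{lemma:crscommute} and Lemma \ref{lemma:partition} (both insides contain $x_4$ and have the same number of punctures) gives $c'_4 = c_4$. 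The remaining case $i = 2$ is then immediate: $c'_2 = s(\alpha_0^{-1})(c_3) = s(\alpha_0^{-2}) s(\alpha_0)(c_3) = s(\alpha_0^{-2})(c_4) = c_2$ by the definition of $c_2$. This completes the verification that $s(\alpha_0)$ preserves $\{c_1, \dots, c_n\}$, and hence Lemma \ref{lemma:invariant} follows.
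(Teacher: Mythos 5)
Your argument is correct and rests on the same lemmas as the paper's proof (Lemmas \ref{lemma:alpha}, \ref{lemma:crsconj}, \ref{lemma:crscommute}, \ref{lemma:partition}, and the puncture-counting technique of Lemma \ref{lemma:sconj}), though it organizes the verification differently at two points. For the $\sigma_1$-action on $\{c_1,c_2\}$, you characterize $c_1, c_2$ as the big boundaries (in the sense of Lemma \ref{lemma:c3}, pushed forward by $s(\alpha_0^{-2})$) of the components of $S^2$ cut along $\CRS(s(\sigma_{n-1}))$ containing $x_1$ and $x_2$, and observe that $s(\sigma_1)$ swaps those components; the paper instead conjugates the identity $s(\sigma_3)(c_3)=c_4$ by $s(\alpha_0^{-2})$. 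For $\alpha_0$-invariance, you introduce $c'_i := s(\alpha_0^{i-3})(c_3)$, note this family is cyclically permuted by $s(\alpha_0)$, and reduce to proving $c'_i = c_i$ for all $i$ — doing the $i\ge 5$ cases by a uniform extension of Lemma \ref{lemma:sconj}, and the $i=4$ case via the rewrite $c'_4 = s(\alpha_0^{-2})(c_6)$ to sidestep the fact that $\sigma_2$ does not commute with $\sigma_1$. The paper instead establishes the single identity $s(\alpha_0)(c_5)=c_6$ and propagates it in both directions using $\alpha_0\sigma_j=\sigma_{j+1}\alpha_0$ together with $s(\sigma_j)(c_{j+1})=c_j$. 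Your $i=4$ trick is a clean way around the same obstruction the paper resolves by propagation, and the $c'_i$ formulation makes the cyclic action manifest from the start; both routes are valid and use the same underlying machinery.
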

\begin{proof}
By Lemma \ref{lemma:genset}, $\Gamma$ is generated by the set $\{s(\sigma_1), s(\alpha_0)\}$. Thus it suffices to show that these two elements both preserve $\{c_1, \dots, c_n\}$. For $i = 3, \dots, n$, the element $s(\sigma_1)$ preserves each $S_i$, and hence also preserves the distinguished boundary component $c_i$. We claim that $s(\sigma_1)(c_1) = c_2$ and that $s(\sigma_1)(c_2) = c_1$. 

To see this, observe that $s(\sigma_3)(c_3) = c_4$. Thus $s(\alpha_0^2 \sigma_1 \alpha_0^{-2})(c_3) = c_4$, and so
\[
s(\alpha_0^2) s(\sigma_1)(s(\alpha_0^{-2})(c_3)) = c_4 = s(\alpha_0^2)(c_2).
\]
It follows that $s(\sigma_1)(s(\alpha_0^{-2})(c_3)) = s(\sigma_1)(c_1) = c_2$ as claimed. As also $s(\sigma_3)(c_4) = c_3$, the same reasoning shows that $s(\sigma_1)(c_2) = c_1$. 

It remains to see that the set $\{c_1, \dots, c_n\}$ is $\alpha_0$-invariant. We claim that $\alpha_0(c_i) = c_{i+1}$, interpreting subscripts mod $n$. It follows directly from Lemma \ref{lemma:sconj} that $s(\alpha_0)(c_5) = c_6$. As $\alpha_0 \sigma_4 = \sigma_5 \alpha_0$, 
\[
s(\alpha_0)(c_4) = s(\alpha_0 \sigma_4)(c_5) = s(\sigma_5)s(\alpha_0)(c_5) = s(\sigma_5)(c_6) = c_5,
\]
since $s(\sigma_i)(c_{i+1}) = c_i$ for $i = 3, \dots, n-1$. Then similar arguments show that $s(\alpha_0)(c_3) = c_4$, and $s(\alpha_0)(c_i) = c_{i+1}$ for $i = 6, \dots, n-1$. 

There are three remaining claims to establish:
\[
s(\alpha_0)(c_1) =c_2, \quad s(\alpha_0)(c_2) = c_3, \quad s(\alpha_0)(c_n) = c_1.
\]
Since $c_2:= s(\alpha_0^{-2})(c_4)$, the equality $s(\alpha_0)(c_2) = c_3$ follows from the above. Then the equality $s(\alpha_0)(c_1) = c_2$ follows by the same logic. Lastly, as $\alpha_0^n = \id$, 
\[
s(\alpha_0)(c_n) = s(\alpha_0^{1-n})(c_n) = c_1
\]
by what we have shown before. 
\end{proof}
Lemma \ref{lemma:invariant} and Lemma \ref{lemma:reducibledistinct} combine to show that $m$ must be divisible by $n(n-1)(n-2)$, contrary to assumption. Case B, and hence Theorem \ref{theorem:main}, follows.

    	\bibliography{sphericalbraid}{}
	\bibliographystyle{alpha}

\end{document}